\documentclass[english,reqno, 11pt]{amsart}
\usepackage[T1]{fontenc}
\usepackage[latin9]{inputenc}
\usepackage{amsthm}
\usepackage{amssymb}
\usepackage{amsmath}
\usepackage{mathabx}
\usepackage{xcolor}
\usepackage{verbatim, color}
\usepackage{bbm}
\usepackage{float}
\usepackage{dsfont, graphicx}
\usepackage{epstopdf}
\usepackage[hidelinks]{hyperref}
\pagestyle{headings}
\usepackage[T1]{fontenc}
\usepackage{amssymb,amsmath, amsthm, amsfonts}
\usepackage{graphicx}

\usepackage[hidelinks]{hyperref}

\usepackage{thmtools,thm-restate}

\headheight=8pt
\topmargin=10pt
\textheight=600pt
\textwidth=434pt
\oddsidemargin=9pt
\evensidemargin=6pt
\footskip=25pt

\makeatletter
\numberwithin{equation}{section}
\numberwithin{figure}{section}
\theoremstyle{plain}
\newtheorem{thm}{Theorem}[section]
\newtheorem{prop}[thm]{Proposition}
\newtheorem{definition}[thm]{Definition}

\newtheorem{lem}[thm]{Lemma}

\newtheorem{rem}[thm]{Remark}

  \newcounter{casectr}
  
\theoremstyle{definition}

\theoremstyle{remark}

\newcommand{\RRR}{\mathbb{R}}
\newcommand{\ZZZ}{\mathbb{Z}}

\newcommand{\xls}{\frac{x_{s}}{\lambda}}
\newcommand{\tgs}{\tilde{\gamma}_{s}}
\newcommand{\llsb}{\frac{\lambda_{s}}{\lambda}+b}
\newcommand{\lls}{\frac{\lambda_{s}}{\lambda}}
\newcommand{\tb}{\Theta_{b}}
\newcommand{\sbb}{\Sigma_{b}}

\newcommand{\HHH}{\mathcal{H}}

\newcommand{\JJJ}{\mathcal{J}}

\newcommand{\GGG}{\mathcal{G}}

\newcommand{\errh}{\int |\nabla I_{N\lambda}\epsilon|^{2}}
\newcommand{\errm}{\int \epsilon^{2}e^{-|y|}}

\newcommand{\FFF}{\mathcal{F}}
\newcommand{\FF}{F}

\newcommand{\eo}{\epsilon_{0}}

\newcommand{\qb}{{Q}_{b}}
\newcommand{\qbb}{\widetilde{Q}_{b}}
\newcommand{\zb}{{\zeta}_{b}}
\newcommand{\zbb}{\widetilde{\zeta}_{b}}
\newcommand{\qbo}{\widetilde{Q}_{b_{0}}}
\newcommand{\eez}{\epsilon_{0}}
\newcommand{\eeo}{\epsilon_{1}}
\newcommand{\eet}{\epsilon_{2}}
\newcommand{\aaa}{\alpha}

\newcommand{\inl}{I_{N\lambda}}

\begin{document}
\title[Construction of $L^2$ log-log blowup solutions for the mass critical NLS]{Construction of $L^2$ log-log blowup solutions for the mass critical nonlinear Schr\"odinger equation}
\author[C. Fan]{Chenjie Fan}
\address{Academy of Mathematics and Systems Science, CAS, Beijing, China}
\email{fancj@amss.ac.cn}
\author[D. Mendelson]{Dana Mendelson}
\address{University of Chicago, Chicago, IL}
\email{dana@math.uchicago.edu}
\begin{abstract}
In this article, we study the log-log blowup dynamics for the mass critical  nonlinear Schr\"odinger equation on $\RRR^2$ under rough but structured random perturbations at $L^{2}(\RRR^2)$ regularity. In particular, by employing probabilistic methods, we provide a construction of a family of $L^{2}(\RRR^2)$ regularity solutions which do not lie in any $H^{s}(\RRR^2)$ for any $s>0$, and which blowup according to the log-log dynamics. 
\end{abstract}

\thanks{C.~Fan gratefully acknowledges support from a Simons Travel Grant  and a start up funding from AMSS. D.~Mendelson gratefully acknowledges support from NSF grant DMS-1800697.} 

\maketitle

\section{Introduction}
\subsection{Main results and background}
We consider the focusing cubic nonlinear Schr\"odinger equation (NLS) on $\mathbb{R}^{2}$
\begin{equation}\label{eq:nls}
\begin{cases}
iu_{t}+\Delta u=-|u|^{2}u, \quad (t, x) \in \RRR \times \RRR^2& \\
u(0,x)=u_{0}. &
\end{cases}
\end{equation}
The goal of this article is to construct log-log blowup solutions at $L_{x}^{2}(\RRR^{2})$ regularity  via random data methods.

\medskip
The NLS \eqref{eq:nls} has three conservation laws
\begin{itemize}
\item Mass: $M(u)= \int |u|^2 $,
\item Momentum: $P(u)=\Im \int u\nabla \bar{u} $,
\item Energy: $E(u)=\frac{1}{2}\int |\nabla u|^{2}-\frac{1}{4}|u|^{4}$,
\end{itemize}
and enjoys the scaling, translation and phase symmetries. In particular, if $u$ solves \eqref{eq:nls} with initial data $u_{0}$, then 
\[
\frac{1}{\lambda_{0}}u\biggl(\frac{t}{\lambda_{0}^{2}}, \frac{x-x_{0}}{\lambda_{0}}\biggr)e^{i\gamma_{0}}, \qquad x_0\in \RRR^{2},\,\, \lambda_0>0,\,\, \gamma_0 \in \RRR
\]
 solves \eqref{eq:nls} with initial data 
 \[
 u_{0,\lambda_{0}} = \frac{1}{\lambda_{0}} u_0\biggl(\frac{x-x_{0}}{\lambda_0}\biggr) e^{i \gamma_0}.
 \]
 One may verify that the mass $M(u)$ is invariant under the same scaling symmetry, and hence the equation \eqref{eq:nls} is referred to as the mass critical NLS. We note that solutions of \eqref{eq:nls} also enjoy the so-called Galilean and pesudoconformal symmetries. We will not explicitly use these symmetries in the present work even though we will rely on many previous results on log-log blowup solutions in which these symmetries play a central role in the analysis.

\medskip
It is classical that $L_x^{2}(\RRR^2)$ initial data gives rise to unique local in time solutions of \eqref{eq:nls}, see \cite{cazenave1989some}. The focusing nature of \eqref{eq:nls} implies, in particular, the existence of a ground state solution, $Q(x)$, which is the unique $L^{2}(\RRR^2)$ radial positive solution of
\begin{equation}\label{eq: ground}
-\Delta Q+Q=-Q^{3}.
\end{equation}
The ground state plays an essential role in the blowup behavior of \eqref{eq:nls}, and in particular, it provides a threshold for blowup dynamics in the following sense: for all $L^{2}_x(\RRR^2)$ solutions with mass strictly below $\|Q\|_{L_{x}^{2}}$, the associated flow is global and scatters asymptotically, see the work of Weinstein \cite{weinstein1983nonlinear} and Dodson \cite{dodson2015global}. Moreover, there exists an explicit blowup solution with mass equal to $\|Q\|_{L_{x}^{2}}^{2}$, given by
\begin{equation}\label{eq: sblow}
S(t,x)=\frac{1}{t}Q\left(\frac{x}{t}\right)e^{-i/t+i\frac{|x|^{2}}{4t}},
\end{equation}
which, in some sense, is  the unique minimal mass blowup solution, see \cite{merle1993determination}.  

Classical virial identity arguments due to Glassey \cite{glassey1977blowing} establish the existence of a large family of negative energy blowup solutions, however the argument does not directly characterize the blowup mechanism for such solutions. It is an active area of research to understand blowup for \eqref{eq:nls} from a constructive perspective, so that one may better understand possible blowup mechanisms in general. For blowup solutions with mass slightly above the ground state,
\begin{equation}\label{eq: slight}
\|Q\|_{2}<\|u_{0}\|_{2}<\|Q\|_{2}+\alpha^{*}, 
\end{equation}
where $\alpha^{*}$ is a small universal number, one of the most well understood blowup dynamic is the so-called \emph{log-log blowup}.  Log-log blowup solutions have been studied numerically in \cite{landman1988rate}, and the first mathematical construction of such solutions was provided by Perelman in \cite{perelman2001blow}. These solutions were subsequently systematically studied by Merle and Rapha\"el in \cite{merle2005blow, merle2003sharp, merle2004universality, merle2006sharp}. In particular,  Merle and Rapha\"el prove that for all $H_x^{1}(\RRR^2)$ solutions to \eqref{eq:nls} which have non-positive energy\footnote{The result of Merle and Rapha\"el is more general and  this  nonpositive energy assumptions can be relaxed. However, general positive energy solutions are less understood compared to those blowing up according to the log-log law.} and with mass slightly above ground state, i.e. in the range \eqref{eq: slight}, such solutions will blowup in finite time\footnote{It is already highly nontrivial that such a solution will blow up in finite time.} $T<\infty$,  with precise asymptotics as $t$ approaches the blowup time $T$ given by the following:
    \begin{definition}[Log-log blowup dynamics]\label{def: loglogblow}
\begin{equation}\label{eq: loglogblow}
u(t,x)= \frac{1}{\lambda(t)}(Q+\epsilon)\left(\frac{x-x(t)}{\lambda(t)}\right)e^{-i\gamma(t)}, \quad  \frac{1}{\lambda(t)}\sim \sqrt{\frac{\ln |\ln T-t|}{T-t}}
\end{equation}
and where 
\[
\epsilon(t)\xrightarrow{t\rightarrow T} 0 \textup{ in }\dot{H}^{1}(\RRR^2)\cap L_{loc}^{2}(\RRR^2).
\]
\end{definition}
Such blowup was shown to be stable in $H_x^{1}(\RRR^2)$ in \cite{raphael2005stability},  and was later proved to be stable under $H_x^{s}(\RRR^2)$ perturbation, for all $s>0$ by Colliander and Rapha\"el \cite{colliander2009rough}, though one needs to reformulate the notion of log-log blowup (in a natural way) for infinite energy solutions. It is unclear whether such blowup is stable under $L_x^{2}(\RRR^2)$ perturbation, although one may guess that the answer is negative given the result of \cite{merle2005profiles}. 

\medskip
In light of the speculation that stability of log-log blowup may be false for arbitrary data in $L_x^2(\RRR^2)$, and the fact that this long-standing question remains open, in the current work we investigate the stability of log-log blowup solutions under \emph{random} $L_x^2(\RRR^2)$ perturbations. Beginning with the seminal work of Bourgain \cite{B96}, the behavior of nonlinear dispersive equations with random initial data has been an active field of research, see further discussion in Section \ref{ssec:compare} below. Indeed, in spite of the absence of known deterministic well-posedness theory, or even the existence of ill-posedness results, randomization often lets one establish that a given dispersive equation is well-posed \emph{almost surely} in a particular low-regularity function space.

In the current article we employ randomization for a different and novel purpose. Our aim is to establish the existence of blowup solutions at $L_x^2(\RRR^2)$ regularity via a probabilistic construction. We state our main theorem non-technically for the time being:

\begin{thm}\label{thm: main}
The log-log blowup dynamics of Definition \ref{def: loglogblow} are stable, with high probability, under (certain structured) random $L_x^2(\RRR^2)$ perturbations.
\end{thm}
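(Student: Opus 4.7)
The plan is to treat the problem as a probabilistic perturbation of a fixed deterministic log-log blowup solution using a Da Prato--Debussche style decomposition together with a modulation analysis adapted from Merle--Rapha\"el and Colliander--Rapha\"el.  Specifically, let $u_{\det}$ be an $H^1$ log-log blowup solution whose parameters $(\lambda,x,\gamma)$ are well understood, and let $\phi^{\omega}\in L^{2}(\RRR^{2})\setminus \bigcup_{s>0}H^{s}$ be the structured random perturbation.  Set the initial data to $u_{0}=u_{\det}(0)+\phi^{\omega}$ and write
\[
u(t,x)=F(t,x)+w(t,x),\qquad F(t,x):=e^{it\Delta}\phi^{\omega},
\]
so that $w$ satisfies the forced NLS
\[
i w_{t}+\Delta w=-|w+F|^{2}(w+F),\qquad w(0)=u_{\det}(0),
\]
with smooth initial data.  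The goal is then to show that almost surely $w$ realizes the log-log regime of Definition \ref{def: loglogblow}, so that $u=w+F$ inherits the same dynamical decomposition modulo the rough free evolution $F$.

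The first step is to establish probabilistic Strichartz-type and bilinear estimates for $F$, together with regularity gains for quadratic objects such as $|F|^{2}$, $F^{2}$ and $F\cdot\overline{F}$ obtained from Wiener chaos / large deviation bounds on the Gaussian (or otherwise structured) randomization.  The particular structure imposed on $\phi^{\omega}$ must be chosen so that, while $\phi^{\omega}\notin H^{s}$ pointwise, the relevant space--time norms of $F$ and of its quadratic and cubic combinations are small with high probability in norms of critical Strichartz type on $[0,T]\times\RRR^{2}$, for $T$ up to (and beyond) the blowup time of $u_{\det}$.  This reduces the forced equation for $w$ to an NLS with extra inhomogeneous terms $N(F,w):=|w+F|^{2}(w+F)-|w|^{2}w$, all of which carry at least one factor of $F$ and are therefore small in a probabilistic sense.

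The second and main step is to insert $w$ into the geometric decomposition
\[
w(t,x)=\frac{1}{\lambda(t)}(Q+\epsilon)\!\left(\frac{x-x(t)}{\lambda(t)}\right)e^{-i\gamma(t)},
\]
impose orthogonality conditions on $\epsilon$, and re-derive the modulation ODEs for $(\lambda,x,\gamma,b)$ as in Merle--Rapha\"el, but now with source terms produced by $N(F,w)$.  The localized virial / Lyapunov functional used to prove log-log decay of $\lambda$ and control of $\|\epsilon\|_{\dot H^{1}}$ must be augmented by the forcing; here one exploits the fact that the modulation analysis takes place on scale $\lambda(t)$ around $x(t)$, so that it is natural to frequency-decompose $F=F_{\leq N(t)}+F_{>N(t)}$ at a blowup-adapted frequency $N(t)\sim \lambda(t)^{-\delta}$.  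The low-frequency piece $F_{\leq N(t)}$ behaves as a genuinely smooth perturbation, controllable in the spirit of Colliander--Rapha\"el's $H^{s}$ stability argument, while the high-frequency piece $F_{>N(t)}$ is handled by probabilistic smallness and oscillatory cancellations inside the energy-type functional driving the $\dot H^{1}$ estimate on $\epsilon$.  A continuity/bootstrap argument on the event where the probabilistic norms of $F$ are small then shows that the log-log regime is not exited, and that $\epsilon(t)\to 0$ in $\dot H^{1}\cap L^{2}_{loc}$.

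The principal obstacle will be this interaction between the rough random profile $F$ and the concentrating bubble in $w$ at the blowup scale.  Because $F\notin H^{s}$ for any $s>0$, there is no scale $\lambda_{*}$ below which $F$ is negligible in $\dot H^{1}$, and terms like $\int \nabla w\cdot\nabla(|F|^{2}w)$ and $\int \nabla w\cdot\nabla(F\,|w|^{2})$ in the energy identity for $\epsilon$ cannot be controlled deterministically.  Overcoming this requires that the structured randomization provide probabilistic decoupling between $F_{>N(t)}$ and the soliton core, together with Wiener-chaos gains on the nonlinear objects; striking the right balance between the blowup rate $\lambda(t)^{-1}\sim\sqrt{\ln|\ln(T-t)|/(T-t)}$ and the frequency cutoff $N(t)$ so that the probabilistic forcing is integrable in time against the Lyapunov functional is the technical heart of the argument.
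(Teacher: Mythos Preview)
Your proposal captures the paper's strategy accurately at the structural level: the Da~Prato--Debussche decomposition $u=w+F$ with $F=e^{it\Delta}\phi^{\omega}$, probabilistic Strichartz and multilinear bounds on $F$, the geometric decomposition of the nonlinear piece $w$ with forced modulation equations, and a bootstrap closed via the Merle--Rapha\"el virial/Lyapunov machinery with Colliander--Rapha\"el style energy control.  Your identification of the central obstacle --- the interaction between the rough tail of $F$ and the concentrating bubble in the energy identity --- is exactly the one the paper isolates.

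There are, however, two points where the implementation you sketch diverges from the paper's and where your version would need to be adjusted.  First, the modulation profile is $\widetilde{Q}_b$, not $Q$; this is inherited from Merle--Rapha\"el and is needed for the sharp virial and Lyapunov estimates.  Second, and more substantively, the paper does \emph{not} frequency-decompose $F$.  Instead it applies the $I$-operator $I_{N(t)}$, with $N(t)\sim\lambda(t)^{-(1+\delta)}$ (not $\lambda(t)^{-\delta}$), to the nonlinear piece $a$ (your $w$) and controls the modified energy $E(I_{N}a)$.  The random forcing then appears as extra terms in $\partial_t E(I_N a)$, schematically $\int \overline{I_N\Delta a}\,\bigl[I_N(|a+F|^{2}(a+F))-|I_N a|^{2}I_N a\bigr]$, and these are estimated on each short local-wellposedness interval (of length $\sim\lambda^{2-}$, with $\lesssim k$ such intervals per dyadic scale $\lambda\sim 2^{-k}$ by the bootstrap) using the probabilistic $L^p_{t,x}$ bounds on $F$ together with bilinear Strichartz.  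Your proposed splitting $F=F_{\le N(t)}+F_{>N(t)}$ is not obviously equivalent: $F_{\le N(t)}$ is a time-dependently truncated free evolution, not an $H^{s}$ perturbation of the initial data, so it does not slot directly into the Colliander--Rapha\"el scheme.  The paper's route --- keeping $F$ intact and putting the $I$-operator on $a$ --- is cleaner precisely because $a$ plays the role of the full solution $u$ in Colliander--Rapha\"el, and the bootstrap hypotheses are formulated for $I_{N\lambda}\epsilon$.  Relatedly, the decoupling mechanism the paper actually exploits for the modulation and $L^2$-dispersion estimates is the \emph{spatial} non-concentration of $F$: pairing $F$ with any $\lambda$-rescaled localized function (e.g.\ $\Lambda\widetilde{Q}_b$, $|y|^2\widetilde{Q}_b$) yields a gain of a positive power of $\lambda$, which is what makes the forcing integrable in the $s$-variable against the log-log asymptotics.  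This is simpler than, and somewhat different from, the frequency-based decoupling of $F_{>N(t)}$ from the soliton core that you anticipate.
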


We will begin with well-prepared $H_x^{1}(\RRR^2)$ data which are known to lead to log-log blowup, and we perturb this initial data with random initial data, constructed as follows: let $\{g_k\}_{k \in \mathbb{Z}^2}$ be a sequence of iid complex Gaussian mean-zero random variables. Let $\{P_k\}_{k \in \mathbb{Z}^2}$ be unit-scale projections to frequency $k \in \mathbb{Z}^2$, defined as the Fourier multiplier with respect to translations of a \emph{fixed} Schwartz function
\begin{align}\label{eq:psi_k}
\psi_k(\xi) := \psi(\xi - k),
\end{align}
that is
\begin{align}\label{eq:p_k}
P_k f = \mathcal{F}^{-1} (\psi_k(\xi) \widehat{f}(\xi)).
\end{align}
We crucially exploit that these Fourier projections satisfy a unit-scale Bernstein inequality, namely for all $1 \leq r_1 \leq r_2 \leq \infty$ we have that 
\begin{align} \label{equ:unit_scale_bernstein}
  \|P_k f\|_{L^{r_2}_x(\RRR^2)} \leq C(r_1, r_2) \|P_k f\|_{L^{r_1}_x(\RRR^2)}
\end{align}
with a constant which is independent of $k \in \mathbb{Z}^4$.

Let $f \in L_x^2(\RRR^2)$, and define its randomization
\begin{align}\label{eq:rand}
f^\omega = \sum_{k  \in \mathbb{Z}^2} g_k(\omega) P_k f = \sum_{k  \in \mathbb{Z}^2} g_k(\omega) (\psi_k \widehat{f} )^{\vee}.
\end{align}
Similar randomizations have previously been used in Euclidean space, first in \cite{ZF}, and subsequently in \cite{LM1}, \cite{BOP1}. One can show that if $f \in L^2_x(\RRR^2) \setminus H^s_x(\RRR^2)$ for some $s > 0$, then $f^\omega \in L_x^2(\RRR^2) \setminus H^s_x(\RRR^2)$ almost surely, and throughout, we will restrict to the subset of full measure of $\Omega$ so that this is indeed the case without further comment.  

In the present application, we will take $f$ to be piecewise constant in Fourier space, i.e. $f_k := P_k f$ constant, and we further require that $f_k$ satisfy\footnote{This is to mimic the randomization in \cite{B96}.}
\begin{align}\label{eq: bourgainsetting}
|f_k| \leq \frac{C}{|k|}, \quad k \geq 1.
\end{align}
Additionally, we normalize
\begin{align} \label{eq: L2sum}
\sum_{k}|f_{k}|^{2}=1.
\end{align}
Note that in particular, there are many $L_x^2(\RRR^2)$ functions $f$ with this property which do not belong to $H^s(\RRR^2)$ for any $s > 0$, and hence $f^{\omega}$  does not belong to $H_x^{s}(\RRR^2)$ for any $s>0$, consider for instance the function $f$ which satisfies
\[
|f_k| \sim \frac{1}{|k| \log^2 |k|} \quad k \in \ZZZ^2.
\]
We note that our result works almost line by line if one assumes $f_{k}$ is a function rather than a number. Then one needs to replace $|f_{k}|$ in \eqref{eq: bourgainsetting} and \eqref{eq: L2sum} by $\|f_{k}\|_{L_{x}^{\infty}}$.

\medskip
We will provide more details about the precise form of the $H_x^1(\RRR^2)$ blowup data in Section \ref{sec:in_data}, and we will state a more detailed version of Theorem \ref{thm: main} in Theorem \ref{thm: mainrigor} below.

\begin{rem}
While our techniques are probabilistic, in light of the previous discussion on the randomized initial data, our main theorem provides a construction of $L^2_x(\RRR^2)$ log-log blowup solutions for the mass critical nonlinear Schr\"odinger equation which do not lie in $H^{s}_x(\RRR^2)$ for any $s>0$. To the best our knowledge, such examples were not previously known in the literature. 
\end{rem}
\begin{rem}
 We emphasize that unlike many random data results, \eqref{eq:nls} is locally well posed at $L^{2}_x(\RRR^2)$, which is the regularity at which we aim to construct our solutions. Consequently, we do not use the randomness to overcome ill-posedness for low regularity data, rather, we use randomization to construct a rough but highly structured perturbation of the original log-log dynamics.
\end{rem}

\subsection{Comparison with previous results}\label{ssec:compare}

\subsubsection{Log-log blowup in $H^{1}$}
We start with a quick review of the works of Merle and Rapha\"el, \cite{merle2005blow, merle2003sharp, merle2004universality, merle2006sharp}. Let us will focus on $H^{1}(\RRR^2)$ solutions $u$ to \eqref{eq:nls} with negative energy, zero momentum\footnote{One can always perform a Galilean transformation to set the momentum to zero, which does not change the mass and does not increase the energy.}, and with mass slightly above that of the ground state $Q$, see \eqref{eq: slight}. Via a variational argument and modulation theory, one can establish a geometric decomposition for the solution, given by
\begin{equation}\label{eq: mrg}
u(t,x)=\frac{1}{\lambda(t)}(\qbb+\epsilon)\left(\frac{x-x(t)}{\lambda(t)}\right)e^{i\gamma(t)},
\end{equation}
where $\qbb$ is a certain elliptic object which is a modification of $Q$, see \eqref{eq:qb}, such that certain orthogonality conditions given in \eqref{eq: modoth1}-\eqref{eq: modoth4} below, hold, and so that $b$ and $\epsilon$ are a priori small.

One may then reduce the study of \eqref{eq:nls} to that of the evolution\footnote{Heuristically, you now have 5 unknowns, $\epsilon, b(t),\lambda(t), x(t), \gamma(t)$ and you have five equations, \eqref{eq:nls}, \eqref{eq: modoth1}-\eqref{eq: modoth4}. Thus, one may expect the system is well determined.}of $\epsilon(t,x)$ and the parameters $b(t), \lambda(t), \gamma(t), x(t)$. It turns out that one should study this system in a rescaled time variable $s$ rather then the original time variable, where 
\begin{equation}\label{eq: timerescale}
\frac{dt}{ds}=\lambda^{2}.
\end{equation}
We note that $\lambda$ dictates the blowup rate, and the parameter $b$ dictates the evolution\footnote{Or more precisely, $\qbb$ is constructed in a such a way so that $b\sim-\frac{\lambda_{s}}{\lambda}$} of $\lambda$ in the sense that $b\sim -\frac{\lambda_{s}}{\lambda}$

A key estimate in the analysis of Merle and Rapha\"el is a local virial estimate\footnote{Note that this estimate only involves local $L^{2}$ information.},
\begin{equation}\label{eq: lv}
b_{s}\geq H(\epsilon)-2\lambda^{2}E-\Gamma_{b}^{1-C\eta}+o(1)(\int |\nabla\epsilon|^{2}+\epsilon^{2}e^{-|y|} ),
\end{equation}
where $\Gamma_b$ is a certain quantity which we define in \eqref{eq: gb} below, that satisfies
 \begin{equation}
e^{-(1+C\eta)\frac{\pi}{b}}\leq \Gamma_{b}\leq e^{-\frac{\pi}{b}(1-C\eta)}.
 \end{equation}
 for $C \eta \ll 1$. One hopes to derive from \eqref{eq: lv} that
 \begin{equation}\label{eq: upper}
b_{s}\geq -\Gamma_{b}^{1-C\eta}.
\end{equation}
Note that formula \eqref{eq: upper} is closely related to the sharp upper bound of the log-log blowup.  The main point is $H$ in \eqref{eq: lv} is some quadratic form, which will be coercive, dominating 
\[
\int |\nabla\epsilon|^{2}+\epsilon^{2}e^{-|y|} 
\]
up to six `bad' directions. Four bad directions will be handled via orthogonality of the modulation parameters\footnote{In practice, some extra is cancellation is needed, since one of the orthogonality conditions for the modulation parameters does not directly compensate for one of the bad directions associated with $H$ }, the other two are handled by energy and momentum conservation. We also remark here that when $E$ is negative, the term $-2\lambda^{2}E$ in \eqref{eq: lv} will not pose any problems for the analysis. In other words, heuristically, one only needs control on the positive part of $E$.

\begin{rem}
The estimate \eqref{eq: upper} already implies the sharp upper bound on the blowup rate for the log-log dynamics, which only differs from the direct scaling lower bound up to a double logarithm. To derive the sharp lower bound, one needs to introduce a truncated object $\zbb$, defined in \eqref{eq:zbb}, to further sharpen the analysis, see Section \ref{sec:ell} and Section \ref{sec: boot} below. 

We note that \eqref{eq: upper} is enough to drive the dynamics into a regime where 
\begin{equation}\label{eq: sosmall}
\lambda\ll e^{-e^{\Gamma_{b}^{-c}}}.
\end{equation}
 In this regime, the crucial observation in \cite{colliander2009rough} is that when $\lambda$ is small compared to $b$, one does not need the negative energy assumption anymore since $\lambda^{2}|E|$ can be treated as a small perturbation. A similar mechanism can also be applied to momentum, i.e. one does not require the strict zero momentum condition. 
\end{rem}

\subsection{$H^{s}$ stability of log-log blowup}
The study of the log-log dynamics in $H^s_x(\RRR^2)$ can be split into two stages. The first stage establishes rigidity of the dynamics in the sense that the solution will be driven towards some special, well-prepared initial data with an almost explicit form. The second stage establishes that for such well prepared data, its evolution can be understood via a bootstrap argument, see \cite{planchon2007existence}. Though both stages will rely on the same crucial ingredients from the analysis of Merle and Rapha\"el, the dynamics in the second stage is better understood since one can argue explicitly by bootstrap. 

The Cauchy problem \eqref{eq:nls} is locally well posed in $H^{s}_x(\RRR^2)$, for any $s>0$. Thus, to prove $H^{s}_x(\RRR^2)$ stability of log-log blowup dynamic is equivalent to proving that for those well-prepared initial data whose evolution can be characterized by the bootstrap estimates, the evolution is stable under $H^{s}_x(\RRR^2)$ perturbations. This fact is established in the work of Colliander and Rapha\"el \cite{colliander2009rough}. One crucial observation and heuristic is that since the solution $u$ is of the form
\[
u=\frac{1}{\lambda(t)}h(\lambda(t)), \qquad \|h(t)\|_{H^{1}}\sim 1,
\]
in the $H^{1}_x(\RRR^2)$ case, one may expect that in the $H^{s}_x(\RRR^2)$ case, the solution has a similar structure, with ``quantitative energy bounds''\footnote{we record this to give some intuition, but in practice, one needs to perform a frequency truncation to discuss the energy of $u$.}
\[
E(u)\sim \frac{1}{\lambda^{2-2s}}.
\]
Recall in \eqref{eq: lv}, the term $E(u)$ has been multiplied by $\lambda^{2}$ in the analysis, and will be formally of size $\lambda^{2s}$.   When $\lambda$ satisfies \eqref{eq:  sosmall}, this term can be treated perturbatively provided $s>0$. This also explains why $s=0$ is conceptually different from the case $s>0$.

\medskip
Unsurprisingly, a main challenge in the analysis of  \cite{colliander2009rough} is that since $u$ is not in $H^{1}(\RRR^2)$ anymore, the energy $E(u)$ is not well-defined (indeed, otherwise, it would be bound by a constant). To overcome this difficulty, one employs the I-method, introduced by Colliander, Keel, Staffilani, Takaoka and Tao \cite{colliander2002almost}, a ubiquitous method in the study of dispersive PDE which exploits energy conservation for low regularity data, and which is philosophically similar (although practically not completely equivalent) to the high-low method of Bourgain, \cite{bourgain1998refinements}. We note that it may be surprising that one can apply I-method for all $s>0$, whereas typically, such computations only work for certain $s>s_{0}$. Broadly speaking, this is because  one has a good \emph{a priori} understanding of the log-log asymptotics.

To briefly sketch the strategy of \cite{colliander2009rough}, one still considers the ansatz
\begin{equation}
u(t,x)=\frac{1}{\lambda(t)}(\qbb+\epsilon)\biggl(\frac{x-x(t)}{\lambda(t)}\biggr)e^{-i\gamma(t)}.
\end{equation}
 One applies the time-dependent operator $I_{N(t)}$ which truncates the high frequency part of the solution above $N(t)=\lambda(t)^{-(1+)}$. One then aims to study the evolution of $I_{N(t)}u(t)$, and to prove that the \emph{positive part} of energy $E(I_{N(t)}u(t))$ is controlled by $\lambda^{-2+2s}$ (formally speaking) via the I-method. In particular, one must establish that this energy can not be too large and positive, although it can be very negative\footnote{It is also emphasized in \cite{colliander2009rough} that the negative part of the energy always drives the solution to blowup.}. A key observation in \cite{colliander2009rough} is that the I-method is compatible with log-log bootstrap scheme, see also \cite{planchon2007existence}.

\subsection{Random $L^{2}$ perturbations}
The approach in \cite{colliander2009rough} breaks down for general $L_{x}^{2}$ perturbations. In this article, we will use randomized $L_{x}^{2}(\RRR^2)$ data $f^{\omega}$, defined in \eqref{eq:rand}, to perturb essentially the same well-prepared data as in \cite{colliander2009rough}. For the solutions $u(t,x)$ to \eqref{eq:nls}, we use the ansatz
\begin{equation}
u(t,x)=a(t,x)+F(t,x), 
\end{equation}
where 
\[
a=\frac{1}{\lambda(t)}(\qbb+\epsilon)\biggl(\frac{x-x(t)}{\lambda(t)}\biggr)e^{-i\gamma(t)} \qquad \textup{and} \qquad F=e^{it\Delta}f^{\omega}.
\]
Our $a$ will behave as the full solution $u$ of \cite{colliander2009rough}.

\medskip
The study of dispersive PDEs via a probabilistic approach was initiated by Bourgain~\cite{B94, B96} for the periodic nonlinear Schr\"odinger equation in one and two space dimensions, building upon the constructions of invariant measures by Glimm and Jaffe~\cite{GlimmJaffe} and Lebowitz, Rose and Speer~\cite{LRS}. Such questions were further explored by Burq and Tzvetkov~\cite{BT1, BT2} in the context of the cubic nonlinear wave equation on a three-dimensional compact Riemannian manifold. There has since been a vast body of research where probabilistic tools are used to study many nonlinear dispersive or hyperbolic equations at supercritical regularities, see for instance the works \cite{CO, Bourgain_Bulut1, BOP2, NS, Bringmann_dnlw, DNY19} as well as  references and discussion therein.

Certain global-in-time random data results in the compact setting which rely on invariant measures work equivalently in the focusing and defocusing cases, see \cite{B94}. However, in the absence of an invariant measure, the vast majority of existing large data probabilistic results treat only the \textit{defocusing} nonlinear Schr\"odinger and wave equations, see for instance \cite{BT4,Pocovnicu,  LM1, LM2, DLuM1, OP, Bringmann_scattering1, KMV} and references therein. We note that analogously to the deterministic theory, one may occasionally obtain ``small data'' type probabilistic results in the focusing setting, see for instance \cite{LM1}, although these are consequences of the local theory and do not relate to the large data probabilistic techniques.

There are two recent works in particular which treat the focusing problem with random initial data, outside the small data or local in time regimes. The first is work of Kenig and Mendelson \cite{KenigMendelson19} which studies the probabilistic stability of the soliton for the energy critical nonlinear wave equation on $\RRR^3$. In that work, the authors produce with high probability a family of radial perturbations of the soliton which give rise to global forward-in-time solutions of the focusing nonlinear wave equation that scatter after subtracting a dynamically modulated soliton. The proof relies on a new randomization procedure using distorted Fourier projections associated to the linearized operator around a fixed soliton. Another work, also in the context of nonlinear wave equations, is recent work of Bringmann \cite{Bringmann_blowup} on the probabilistic stability of blowup for the ODE blowup for the cubic nonlinear wave equation on $\mathbb{R}^3$. The proof in this latter paper relies on probabilistic Strichartz estimates in similarity coordinates, and in particular, does not require a randomization adapted to the blowup solution.

\medskip
Like in \cite{KenigMendelson19} and \cite{Bringmann_blowup}, in the current work we are in the large data yet perturbative regime. We note, however, that the geometric blowup we treat is quite distinct from the ODE blowup handled in \cite{Bringmann_blowup}. We leverage random data techniques to establish a bootstrap result,  stated in Lemma \ref{lem: boot} below. In the present work, compared to previous random data works, we do not need to use probabilistic improvements to overcome issues with deterministic wellposedness. Indeed, as mentioned previously, \eqref{eq:nls} is deterministically locally well-posed in $L^2_x(\RRR^2)$ via Strichartz estimates. However, we leverage the random data in two novel ways: first, we use it to obtain precise quantitative control on the wellposedness estimates for the bootstrap scheme, and second, we use the randomness to achieve the endpoint estimates\footnote{More precisely, though the I-method computation evolves terms with end point regularity, we apply the probalistic techiniques to prove they will behave like non-end points elements, thus the problem is still sub-critial rather than crtical. It should be noted both I-method and random data analysis are of sub-crtical nature, though in those problems, the original critical regularity may not be crtical in the usual sense.} for the I-method computation mentioned above. The former estimates are achieved in the spirit of the work of Bourgain \cite{Bourgain}, adapted to $\RRR^2$. One difference between our work and Bourgain's is that in light of the fact that our initial data lies at $_{x}^{2}(\RRR^2)$ regularity, there is no need to ``Wick-order'' the nonlinearity\footnote{The precise form of such an ordering is not as obvious in the Euclidean setting, however a continuous version of Wick-ordering is indeed possible.}. 

In the I-method computation, we exploit improved probabilistic estimates for the free evolution of the random initial data in a novel manner, mainly we use the fact that they can be made uniformly small in time to close the I-method estimates at the endpoint. Heuristically, one can view the free evolution of the random data as being not only equidistributed in space for a fixed time, but also roughly equidistributed in time. Hence, from the point of view of the time-scales associated to log-log blowup, the free evolution of the random data can be thought of as a source term which makes increasingly small contributions to the dynamics, and thus preserves the blowup mechanism.  Finally, we remark, that although a priori $a$ and $F$ are both $\mathcal{O}(1)$ in $L_{x}^{2}(\RRR^2)$, since the randomized initial data is non-concentrated, the $L^{2}$-pairing of the singular part, $a$, with the free evolution of the random data, $F$, will give a power of $\lambda$, and hence all such terms will be of perturbative nature in the modulation argument since $\lambda$ is so small. We will carry on a more thorough discussion of the proof in Section~\ref{sub: ov}.

\subsection{An overview for the proof of bootstrap lemma}\label{sub: ov}
We conclude the introduction by presenting an overview for the proof of the main bootstrap result, stated in Lemma \ref{lem: boot}. Two ingredients are required to close the bootstrap:
\begin{itemize}
\item Under the bootstrap assumption, the dynamics can be viewed as a perturbation of the log-log blowup dynamics.
\item Log-log blowup dynamics can upgrade the bootstrap assumptions to bootstrap estimates.
\end{itemize}

We will focus on the first ingredient, since the second part essentially follows from earlier works, in particular \cite{colliander2009rough}, building on the earlier works \cite{merle2006sharp, planchon2007existence}. There are three main factors which ensure the dynamics can be viewed as a perturbation of the log-log dynamics:
\begin{itemize}
\item According to the bootstrap assumptions, we have $\lambda\ll b$, and essentially all terms of form $\lambda^{\sigma}$, for $\sigma>0$ maybe treated as a perturbation. In particular, if one pairs the linear evolution of the randomized data $F$ with terms of form $\frac{1}{\lambda}h(x/\lambda)$ such that $h$ is somehow localized, one obtains a perturbative term.
\item The bootstrap assumption $t_{k+1}-t_{k}\lesssim k\lambda(t_{k})^{2}$ gives good control on how many local wellposedness (LWP) intervals we have throughout the analysis, and in particular, in every LWP interval $[a,b]$ such that $\lambda(t)\sim 2^{-k}$ and $|b-a|\sim \lambda^{2}(t_{k})$, we can establish probabilistic wellposedness, based on bootstrap assumption \eqref{eq: bah1control}.
\item Finally, based on the the probabilistic wellposedness in every LWP interval, one can perform an I-method type energy estimate combined with random data estimates to obtain good control on $E(I_{N(t)}u)$ (or more precisely speaking, the positive part of this quantity), which will ensure the log-log dynamics persist. One also needs to control $P(I_{N}u)$, but this is relatively easier.
\end{itemize}

Key ingredients in the current article are the development of suitable probabilistic wellposedness in every LWP interval, and the derivation of good energy estimates for $E(I_{N}u)$. One may compare these ingredients to those in \cite{colliander2009rough}, in which the usual $H_x^{s}(\RRR^2)$ wellposedness is used in every LWP interval, and a more classical version of I-method is applied. As noted earlier, although there are certainly crucial differences between the current work and \cite{colliander2009rough}, we fundamentally rely on the observation from that work that the I-method is compatible with the log-log bootstrap regime\footnote{In some sense, all I-method arguments rely on good control on the numbers of LWP intervals, and a good understanding of wellposedness estimates within every LWP interval}.

\subsection{Organization of paper}
In Section \ref{sec:prelim}, we introduce some probabilistic and deterministic preliminaries. In Section \ref{sec:in_data} we describe the initial data and introduce the bootstrap assumptions. We will elaborate on the probabilistic estimates and I-method type computation in Section \ref{sec:pi}. In Section \ref{sec: sig}, we will provide a relatively detailed sketch about how such energy estimate plus the log-log dynamic close the bootstrap scheme in Section \ref{sec: boot}.

\subsection{Notation}
We use $\Lambda:=1+y\cdot \nabla$ to denote the generator of the $L^{2}_x(\RRR^2)$ scaling. When we write for some $f$ one has $f=f_{1}+if_{2}$, implicitly, we mean $f_{1}:=\Re f$ and $f_{2}:=\Im f$. We denote by $C > 0$ an absolute constant which only depends on fixed parameters and whose value may change from line to line. We write $X \lesssim Y$ to indicate that $X \leq C Y$ and we use the notation $X \sim Y$ if $X \lesssim Y \lesssim X$. Moreover, we write $X \lesssim_\nu Y$ to indicate that the implicit constant depends on a parameter $\nu$ and we write $X \ll Y$ if the implicit constant should be regarded as small. We will write $c+$ to denote $c+ \varepsilon$ for an arbitrary $\varepsilon > 0$, and similarly for $c-$. We also use the notation $\langle x \rangle := (1+x^2)^{1/2}$.

\subsection{Acknowledgements} The authors thank Carlos Kenig and Gigliola Staffilani for interesting and helpful discussions.

\section{Preliminaries}\label{sec:prelim}
\subsection{I-operator}\label{sub: imethod}
Following \cite{colliander2002almost, colliander2009rough}, let $0 < s < 1$ and let $m: \mathbb{R}^+ \to \mathbb{R}^+$ be a smooth, monotone function which satisfies $m(|\xi|) = 1$ for $0 \leq |\xi| \leq 1$, and $m(|\xi|) = |\xi|^{s-1}$ for $|\xi| \geq 2$. Let $N \gg 1$ and define
\[
m_N(\xi) = m\left(\frac{|\xi|}{N} \right),
\]
and note that
\begin{equation}
m_N(|\xi|) =\begin{cases}
1 & |\xi| < N \\
\left( \frac{N}{|\xi|}\right)^{1-s} & |\xi| > 2N.
\end{cases}
\end{equation}
The operator $I_N$ is the Fourier multiplier associated to $m_N$:
\[
\widehat{I_N f}(\xi) = m_N(\xi) \widehat{f}(\xi)
\]
 and we note that
 \[
\|f\|_{H^s} \lesssim \|I_N \langle D \rangle f \|_{L^2} \lesssim N^{1-s} \|f\|_{H^s}.
 \]
 \begin{rem}
The operator $I_N$ is also strong-type $(p,p)$ for all $1\leq p\leq \infty$, uniformly in $N$.
\end{rem}

\subsection{Strichartz Estimates}
We recall the classical Strichartz estimates, which play a important role in the local theory of NLS.
\begin{definition}[Admissible pairs]
For $d \geq 1$ we say a pair of exponents $(q,r)$ is Schr\"odinger admissible if
\begin{align}
\frac{2}{q} + \frac{d}{r} = \frac{d}{2}, \qquad 2 \leq q,r \leq \infty, \quad \textup{and} \quad (d,q,r) \neq (2,2,\infty).
\end{align}
For a fixed spacetime slab, $I \times \mathbb{R}^d$, we define the Strichartz norm
\begin{align}
\|u\|_{S(I)} := \sup_{(q,r) \textup{ admissible}} \|u\|_{L^q_t L^r_x (I \times \RRR^d)}.
\end{align}
We let $S(I)$ denote the closure of all test functions under this norm, and let $N(I)$ denote its dual.
\end{definition}
\begin{rem}
In dimension $d= 2$, the supremum must actually be restricted to a closed subset to avoid the inadmissible endpoint.
\end{rem}
\begin{prop}[Strichartz estimates, cf. \cite{keel1998endpoint, tao2006nonlinear, cazenave2003semilinear}]
Let $0 \leq s \leq 1$, let $I$ be a compact time interval, and let $u: I \times \RRR^d \to \mathbb{C}$ be a solution to the forced Schr\"odinger equation
\[
iu_t + \Delta u = F.
\]
Then for any $t_0 \in I$, we have
\[
\||\nabla|^s u \|_{S(I)} \lesssim \|u(t_0) \|_{\dot H^s_x} + \||\nabla|^s F \|_{N(I)}.
\]
\end{prop}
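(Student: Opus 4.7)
The plan is to follow the classical $TT^*$ strategy, reducing the inhomogeneous spacetime estimate to the homogeneous one and then to a dispersive estimate in physical space. First I would establish the dispersive bound $\|e^{it\Delta} f\|_{L^\infty_x(\RRR^d)} \lesssim |t|^{-d/2} \|f\|_{L^1_x(\RRR^d)}$, which is immediate from the explicit Schr\"odinger kernel $(4\pi i t)^{-d/2} e^{i|x-y|^2/(4t)}$. Combined with the unitarity of $e^{it\Delta}$ on $L^2_x$, Riesz--Thorin interpolation yields $\|e^{it\Delta} f\|_{L^r_x} \lesssim |t|^{-d(1/2 - 1/r)} \|f\|_{L^{r'}_x}$ for $2 \leq r \leq \infty$.

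Next, I would run the $TT^*$ argument: for $T f(t) := e^{it\Delta} f$ viewed as a map $L^2_x \to L^q_t L^r_x$, the composition $T T^* F(t) = \int e^{i(t-s)\Delta} F(s)\,ds$ is controlled by applying Minkowski's inequality and the Hardy--Littlewood--Sobolev inequality to the resulting convolution against $|t - s|^{-d(1/2 - 1/r)}$. This closes precisely when the admissibility condition $2/q + d/r = d/2$ holds with $q > 2$, yielding the homogeneous Strichartz estimate $\|e^{it\Delta} f\|_{S(I)} \lesssim \|f\|_{L^2_x}$. The main potential obstacle is the endpoint $(q,r) = (2, 2d/(d-2))$ in dimensions $d \geq 3$, which lies outside the range accessible through HLS and ordinarily requires the Keel--Tao bilinear interpolation argument. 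However, the paper works in $d = 2$ and explicitly excludes $(d, q, r) = (2, 2, \infty)$ in the admissibility definition, so only the strictly admissible (non-endpoint) range is needed, and the elementary HLS argument suffices.

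To pass from the homogeneous bound to the inhomogeneous bound on the Duhamel integral $\int_{t_0}^t e^{i(t-s)\Delta} F(s)\,ds$ for a pair of admissible exponents, I would invoke the Christ--Kiselev lemma, which upgrades a non-retarded bound on $\int_I e^{i(t-s)\Delta} F(s)\,ds$ (obtained by another application of $TT^*$ together with duality) to the retarded integral, provided $q > \tilde{q}'$ — again automatic away from the endpoint.

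Finally, to obtain the estimate for $0 \leq s \leq 1$, I would note that $|\nabla|^s$ is a Fourier multiplier which commutes with both $e^{it\Delta}$ and the Duhamel operator $\int_{t_0}^t e^{i(t-s)\Delta}(\cdot)\,ds$. Hence $|\nabla|^s u$ solves $i (|\nabla|^s u)_t + \Delta (|\nabla|^s u) = |\nabla|^s F$ with data $|\nabla|^s u(t_0)$, and applying the $s = 0$ estimate to this equation gives
\[
\| |\nabla|^s u \|_{S(I)} \lesssim \| |\nabla|^s u(t_0) \|_{L^2_x} + \| |\nabla|^s F \|_{N(I)} = \| u(t_0) \|_{\dot H^s_x} + \| |\nabla|^s F \|_{N(I)},
\]
as claimed. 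Since the result is completely standard, in practice I would simply appeal to the references \cite{keel1998endpoint, tao2006nonlinear, cazenave2003semilinear} rather than reproducing the argument.
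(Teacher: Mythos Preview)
Your outline is correct and is the standard route to Strichartz estimates; indeed you even note at the end that in practice you would simply cite the references. That is exactly what the paper does: the proposition is stated with references to \cite{keel1998endpoint, tao2006nonlinear, cazenave2003semilinear} and no proof is given, so there is nothing further to compare.
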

\begin{prop}[Bilinear Strichartz estimates, cf. \cite{bourgain1998refinements}]\label{prop:bilin}
Let $f_{1}, f_{2}$ be two $L_{x}^{2}(\RRR^2)$ functions, and let $N\geq M$. Then one has 
\begin{equation}
\|e^{it\Delta}P_{N}f_{1}e^{it\Delta}P_{M}f_{2}\|_{L_{t,x}^{2}}\lesssim \biggl(\frac{M}{N}\biggr)^{1/2}\|f_{1}\|_{L_{x}^{2}}\|f_{2}\|_{L_{x}^{2}}.
\end{equation}
\end{prop}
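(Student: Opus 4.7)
The plan is to follow Bourgain's original approach from \cite{bourgain1998refinements}, using Plancherel in spacetime to reduce to a geometric measure estimate on the paraboloid. Let $u_j := e^{it\Delta} P_{k_j} f_j$ with $k_1 = N$, $k_2 = M$, so that
\[
\widehat{u_j}(\tau, \xi) = c\, \delta\bigl(\tau + |\xi|^2\bigr)\, \psi_{k_j}(\xi)\, \widehat{f_j}(\xi)
\]
is supported on the paraboloid and localized to $|\xi| \sim k_j$. By Plancherel in $(t,x)$,
\[
\|u_1 u_2\|_{L^2_{t,x}}^2 = \|\hat u_1 *_{\tau,\xi} \hat u_2\|_{L^2_{\tau,\xi}}^2.
\]

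Expanding the convolution and integrating out one delta yields
\[
(\hat u_1 * \hat u_2)(\tau, \xi) = c \int_{\RRR^2} \delta\bigl(\tau + |\eta|^2 + |\xi - \eta|^2\bigr)\, \psi_N(\xi-\eta)\widehat{f_1}(\xi-\eta)\, \psi_M(\eta)\widehat{f_2}(\eta)\, d\eta.
\]
I would then apply Cauchy--Schwarz with respect to the induced surface measure on the constraint curve
\[
\Sigma(\tau, \xi) := \bigl\{ \eta \in \RRR^2 : |\eta|^2 + |\xi - \eta|^2 = -\tau,\ |\eta|\sim M,\ |\xi-\eta|\sim N \bigr\},
\]
to obtain
\[
\bigl|(\hat u_1 * \hat u_2)(\tau, \xi)\bigr|^2 \lesssim A(\tau, \xi)\, \int_{\Sigma(\tau,\xi)} |\widehat{f_1}(\xi-\eta)|^2 |\widehat{f_2}(\eta)|^2\, d\sigma(\eta),
\]
where $A(\tau, \xi) = \sigma(\Sigma(\tau, \xi))$. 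Integrating in $(\tau,\xi)$, using Fubini to undo the delta constraint in $\tau$, and then Plancherel in $\xi,\eta$, the right-hand side collapses to $\sup_{\tau, \xi} A(\tau, \xi) \cdot \|f_1\|_{L^2}^2 \|f_2\|_{L^2}^2$. Taking square roots will then produce the claimed bound provided one can show $\sup_{\tau, \xi} A(\tau, \xi) \lesssim M/N$.

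The key geometric step is therefore the measure bound $A(\tau, \xi) \lesssim M/N$. Completing the square, $|\eta|^2 + |\xi-\eta|^2 = 2|\eta - \xi/2|^2 + |\xi|^2/2$, so the unrestricted level set is a circle of radius $R = \sqrt{(-\tau - |\xi|^2/2)/2}$ centered at $\xi/2$, and the delta measure carries density $1/(4R)$ with respect to arc length. Parametrizing $\eta = \xi/2 + R(\cos\theta, \sin\theta)$, the two annular constraints $|\eta|\sim M$ and $|\xi-\eta|\sim N$ with $N\geq M$ force the quantity $|\xi-\eta|^2 - |\eta|^2 = -2R\,\xi\cdot(\cos\theta,\sin\theta)$ to have magnitude $\sim N^2$ while being allowed to vary only by $O(M^2)$; a short case analysis (splitting on whether $|\xi|/2 \sim R$ or $|\xi|/2 \ll R$) then shows the admissible arc has length $\lesssim M$, and the density factor $1/R \gtrsim 1/N$ yields $A(\tau,\xi) \lesssim M/N$.

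The main obstacle is precisely this geometric measure bound, where one must track transversality between the two frequency-localized pieces of the paraboloid when $N \geq M$; this is what produces the gain of $(M/N)^{1/2}$, beyond what the linear Strichartz estimate followed by H\"older would give. Since the bound is a classical ingredient of the bilinear Strichartz theory in two dimensions, I would present the Plancherel reduction and the Cauchy--Schwarz step in detail and then invoke the geometric measure estimate by direct citation to \cite{bourgain1998refinements}.
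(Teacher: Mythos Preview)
The paper does not give its own proof of this proposition; it is stated with a citation to \cite{bourgain1998refinements} and used as a black box throughout. Your sketch is the standard Bourgain argument and is correct: Plancherel in spacetime, Cauchy--Schwarz on the level curve, and the geometric measure bound $A(\tau,\xi)\lesssim M/N$ coming from the fact that the constraint circle has radius $\sim N$ while the low-frequency annulus cuts out an arc of length $\lesssim M$. One minor notational point: in this paper $P_k$ for $k\in\ZZZ^2$ denotes the unit-scale projections of \eqref{eq:p_k}, whereas $P_N$, $P_M$ in the statement are ordinary dyadic Littlewood--Paley projections; your write-up uses $P_{k_j}$ with $k_1=N$, $k_2=M$, which risks confusion with the former---better to keep the dyadic notation.
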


We now turn to the definition of $X^{s,b}$ spaces, \cite{bourgain1993fourier}:

\begin{definition}
The space $X^{s,b}(\RRR \times \RRR^d)$ is the closure of test functions under the norm
\[
\|u\|_{X^{s,b}(\RRR \times \RRR^d)} := \| \langle \xi \rangle^s \langle \tau - |\xi|^2 \rangle \widehat{u}(\xi ,\tau) \|_{L^2_{\xi, \tau}}.
\]
\end{definition}
Recall that $X^{s,b}$ embeds into $C_t^0 H^s_x$ for $b > \frac{1}{2}$. The restricted version of the space on $[-\delta, \delta] \times \RRR^d$ is defined by
\[
\|u\|_{X^{s,b, \delta}} :=  \inf \{\|\widetilde{u}\|_{X^{s,b}(\RRR \times \RRR^d)} \,:\, \widetilde{u} \big|_{[-\delta, \delta]} = u \}.
\]
We recall that free solutions lie in $X^{s,b}$ \emph{locally} in time but not globally. An important property of $X^{s,b}$ spaces is the following:

\begin{lem}
Let $Y$ be a Banach space of functions on $\mathbb{R} \times \RRR^d$ with the property that
\[
\| e^{it \tau_0} e^{ it \Delta} f \|_Y \lesssim \|f\|_{H^s_x}
\]
for all $f \in H^s$ and $\tau_0 \in \RRR$. Then we have
\[
\|u\|_Y \lesssim_b \|u\|_{X^{s,b}(\RRR \times \RRR^d)}.
\]
\end{lem}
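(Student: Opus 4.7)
My plan is to prove the transfer principle by decomposing $u$ into a continuous superposition of modulated free Schrödinger evolutions, then applying the hypothesis on $Y$ pointwise in the modulation parameter and summing via Cauchy--Schwarz. This is the standard argument from the $X^{s,b}$ literature; the only subtlety is keeping track of the weight coming from the definition of the norm.

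First, I would take the spacetime Fourier transform of $u$ and rewrite
\[
u(t,x) = \iint e^{it\tau}e^{ix\cdot\xi}\widehat{u}(\tau,\xi)\,d\tau\,d\xi = \iint e^{it(\tau-|\xi|^{2})}\,e^{it|\xi|^{2}}e^{ix\cdot\xi}\widehat{u}(\tau,\xi)\,d\tau\,d\xi.
\]
Introducing the change of variable $\sigma := \tau - |\xi|^{2}$ and defining for each $\sigma \in \RRR$ a function $f_{\sigma}$ via its spatial Fourier transform $\widehat{f_{\sigma}}(\xi) := \widehat{u}(\sigma+|\xi|^{2},\xi)$, this becomes
\[
u(t,x) = \int_{\RRR} e^{it\sigma}\bigl(e^{it\Delta}f_{\sigma}\bigr)(x)\,d\sigma.
\]
Thus $u$ is realized as an integral superposition of the modulated free solutions that the hypothesis on $Y$ controls.

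Next, I would apply the triangle inequality for the $Y$ norm and use the assumption with $\tau_{0}=\sigma$ to obtain
\[
\|u\|_{Y} \leq \int_{\RRR}\bigl\|e^{it\sigma}e^{it\Delta}f_{\sigma}\bigr\|_{Y}\,d\sigma \lesssim \int_{\RRR}\|f_{\sigma}\|_{H^{s}_{x}}\,d\sigma.
\]
Then I would insert the weight $\langle\sigma\rangle^{b}\langle\sigma\rangle^{-b}$ and apply Cauchy--Schwarz in $\sigma$:
\[
\int_{\RRR}\|f_{\sigma}\|_{H^{s}_{x}}\,d\sigma \leq \biggl(\int_{\RRR}\langle\sigma\rangle^{-2b}\,d\sigma\biggr)^{1/2}\biggl(\int_{\RRR}\langle\sigma\rangle^{2b}\|f_{\sigma}\|_{H^{s}_{x}}^{2}\,d\sigma\biggr)^{1/2}.
\]
The first factor is $\lesssim_{b} 1$ since $b>1/2$ (implicit from the surrounding discussion, and necessary for $X^{s,b}\hookrightarrow C^{0}_{t}H^{s}_{x}$). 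For the second factor, I expand the $H^{s}$ norm in Fourier and undo the change of variables $\tau = \sigma+|\xi|^{2}$ to recognize
\[
\int_{\RRR}\langle\sigma\rangle^{2b}\|f_{\sigma}\|_{H^{s}_{x}}^{2}\,d\sigma = \iint \langle\sigma\rangle^{2b}\langle\xi\rangle^{2s}|\widehat{u}(\sigma+|\xi|^{2},\xi)|^{2}\,d\xi\,d\sigma = \|u\|_{X^{s,b}(\RRR\times\RRR^{d})}^{2}.
\]
Combining these yields $\|u\|_{Y}\lesssim_{b}\|u\|_{X^{s,b}}$, as claimed.

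There is no real obstacle here; the main point to be careful about is the use of Minkowski/triangle inequality for the Bochner-type integral defining $u$, which requires $\sigma\mapsto e^{it\sigma}e^{it\Delta}f_{\sigma}$ to be suitably measurable as a $Y$-valued map. This is standard for Schwartz $u$, and the general case follows by density once the inequality is established on a dense subclass (e.g.\ $u\in\SSS(\RRR\times\RRR^{d})$) with a constant depending only on $b$. The measurability is automatic there, so the argument above constitutes the full proof modulo a routine approximation.
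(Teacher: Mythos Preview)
Your proof is correct and is the standard argument for this transfer principle. The paper itself states this lemma without proof, treating it as a standard fact from the $X^{s,b}$ literature, so there is nothing to compare against; your write-up would serve perfectly well as the omitted proof.
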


We will also need a multilinear transfer principle for $X^{s,b}$ spaces:
\begin{prop}[Transfer Principle, cf. \cite{HHK}]
Let $b > \frac{1}{2}$, $Y = L^q_t L^r_x$ for $1 \leq p,q \leq \infty$ and $\mathcal{T}$ a $k$-linear operator such that
\[
\| \mathcal{T}(e^{it\Delta} f_1, \ldots, e^{it\Delta} f_k) \|_Y \lesssim \prod_{j=1}^k \|f_j \|_{H^s_j},
\]
then
\[
\| \mathcal{T}(u_1, \ldots,u_k) \|_Y \lesssim \prod_{j=1}^k \|u_j \|_{X^{s_j, b}}.
\]
\end{prop}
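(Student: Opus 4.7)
My plan is the standard Bourgain $X^{s,b}$ transfer argument: realize each $u_j$ as a continuous superposition of (time-modulated) free Schr\"odinger solutions, apply the hypothesis pointwise in the modulation parameter, and reassemble by Minkowski's integral inequality. The role of the restriction $b > 1/2$ will be exactly to upgrade $L^2$-integrability in the modulation variable to $L^1$-integrability via Cauchy--Schwarz, which is what Minkowski requires.

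First, given $u \in X^{s,b}$, I would write, via Fourier inversion in time followed by the change of variables $\tau_0 := \tau - |\xi|^2$,
\[
u(t,x) = \int_{\RRR} e^{it\tau_0} \bigl( e^{it\Delta} f_{\tau_0} \bigr)(x) \, d\tau_0,
\]
where $f_{\tau_0}$ is defined on the Fourier side by $\widehat{f_{\tau_0}}(\xi) := \widetilde{u}(\tau_0 + |\xi|^2, \xi)$ (up to the sign convention for $e^{it\Delta}$). By construction $\|f_{\tau_0}\|_{H^s_x} = \|\langle \xi \rangle^s \widetilde{u}(\tau_0 + |\xi|^2, \xi)\|_{L^2_\xi}$, so Cauchy--Schwarz in $\tau_0$ yields
\[
\int_{\RRR} \|f_{\tau_0}\|_{H^s_x} \, d\tau_0 \leq \Bigl( \int_{\RRR} \langle \tau_0 \rangle^{-2b} \, d\tau_0 \Bigr)^{1/2} \|u\|_{X^{s,b}} \lesssim_b \|u\|_{X^{s,b}},
\]
where the weight integral converges precisely because $b > 1/2$. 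Substituting this representation into each slot of $\mathcal{T}$ and using $k$-linearity,
\[
\mathcal{T}(u_1, \ldots, u_k)(t,x) = \int_{\RRR^k} e^{it(\tau_{0,1} + \cdots + \tau_{0,k})} \, \mathcal{T}\bigl( e^{it\Delta} f_{1,\tau_{0,1}}, \ldots, e^{it\Delta} f_{k,\tau_{0,k}} \bigr)(t,x) \, d\tau_{0,1} \cdots d\tau_{0,k},
\]
where the overall time phase $e^{it(\tau_{0,1} + \cdots + \tau_{0,k})}$ factors out as a single scalar of modulus one for the multilinear operators relevant in this paper (built from pointwise products, conjugates, and Fourier multipliers in $x$). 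Taking the $Y = L^q_t L^r_x$ norm, applying Minkowski in $(\tau_{0,1},\ldots,\tau_{0,k})$, invoking the hypothesis pointwise in these frozen parameters, and finally using the single-variable bound above once for each index $j$ gives
\[
\|\mathcal{T}(u_1, \ldots, u_k)\|_Y \lesssim \prod_{j=1}^k \int_{\RRR} \|f_{j,\tau_{0,j}}\|_{H^{s_j}_x} \, d\tau_{0,j} \lesssim_b \prod_{j=1}^k \|u_j\|_{X^{s_j, b}},
\]
which is the claimed estimate.

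The one step I expect to require care is the pulling of the time modulations $e^{it\tau_{0,j}}$ through $\mathcal{T}$: the hypothesis as stated applies only to honest free solutions $e^{it\Delta} f_j$, not to modulated ones. For the operators arising in the present analysis the modulations coalesce into a single phase of modulus one that is invisible to $L^q_t L^r_x$, so no factor of $\tau_0$ leaks into the bound. In a fully abstract setting one would instead interpret the hypothesis as holding uniformly after such a time modulation, which is automatic in any proof based on Strichartz-type estimates. Once this point is addressed, the remaining content is the standard $X^{s,b}$ transfer argument and the hypothesis $b > 1/2$ enters only through the convergence of $\int \langle \tau_0 \rangle^{-2b} d\tau_0$ above.
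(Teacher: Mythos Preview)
Your argument is correct and is precisely the standard $X^{s,b}$ transfer argument; the paper does not supply its own proof but simply cites the result, and what you have written is essentially the proof one finds in the cited reference. Your caveat about the time modulations is well-placed and correctly resolved for the product-type multilinear operators used in the paper.
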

We will use the transfer principle repeatedly throughout our estimates in order to combine Strichartz estimates with $X^{s,b}$ spaces.

\subsection{Random data preliminaries}
Here we collect some of the random data results which we will use in the sequel. Recall below $\FF=e^{it \Delta} f^\omega$, where $f^\omega$ has been defined in \eqref{eq:rand}. We begin with the following $\ell^\infty$ Gaussian bound:

\begin{lem}\label{lem:l_inf_gaussian}
For every $\varepsilon > 0$, there exists $C, c> 0$ such that
\begin{align} \label{ell_infty}
	\mathbb{P}(\{  \langle n \rangle^{-\varepsilon}|g_n(\omega)| > K \}) \leq Ce^{-c K^2}
\end{align}
\end{lem}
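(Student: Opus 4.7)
The plan is to interpret the event in question as a statement about the supremum $\sup_{n \in \mathbb{Z}^2} \langle n \rangle^{-\varepsilon} |g_n(\omega)|$, and to derive the bound by a union bound combined with the standard Gaussian tail estimate for each individual $g_n$. The main observation is that because we are down-weighting $|g_n|$ by $\langle n \rangle^{-\varepsilon}$ with $\varepsilon > 0$, the summand in the union bound decays super-polynomially in $|n|$, which more than compensates for the polynomial growth in $|n|$ of the lattice volume in $\mathbb{Z}^2$.

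First I would recall that there exist absolute constants $c_0, C_0 > 0$ such that for each fixed $n \in \mathbb{Z}^2$ and each threshold $M > 0$, the Gaussian tail bound gives
\[
\mathbb{P}\bigl(|g_n(\omega)| > M\bigr) \leq C_0 e^{-c_0 M^2}.
\]
Applying this with $M = K \langle n \rangle^{\varepsilon}$ yields
\[
\mathbb{P}\bigl(\langle n \rangle^{-\varepsilon} |g_n(\omega)| > K\bigr) \leq C_0 e^{-c_0 K^2 \langle n \rangle^{2\varepsilon}}.
\]
Next, I would take a union bound over $n \in \mathbb{Z}^2$ to obtain
\[
\mathbb{P}\Bigl(\sup_{n \in \mathbb{Z}^2} \langle n \rangle^{-\varepsilon} |g_n(\omega)| > K\Bigr) \leq C_0 \sum_{n \in \mathbb{Z}^2} e^{-c_0 K^2 \langle n \rangle^{2\varepsilon}}.
\]

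To conclude, I would split the exponent as
\[
c_0 K^2 \langle n \rangle^{2\varepsilon} \geq \tfrac{c_0}{2} K^2 + \tfrac{c_0}{2} \langle n \rangle^{2\varepsilon},
\]
valid whenever $K \geq 1$ and $\langle n \rangle \geq 1$ (which is automatic). This gives
\[
\sum_{n \in \mathbb{Z}^2} e^{-c_0 K^2 \langle n \rangle^{2\varepsilon}} \leq e^{-\frac{c_0}{2} K^2} \sum_{n \in \mathbb{Z}^2} e^{-\frac{c_0}{2} \langle n \rangle^{2\varepsilon}},
\]
and the remaining sum on the right converges to a finite constant $C_\varepsilon$ depending only on $\varepsilon > 0$, since the summand decays faster than any polynomial in $|n|$. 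For $K \leq 1$ the claimed inequality holds trivially by adjusting $C$. Setting $c = c_0/2$ and $C = C_0 \cdot C_\varepsilon$ yields the stated bound.

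There is no real obstacle here beyond bookkeeping; the only mild subtlety is ensuring that the required constant $C$ may depend on $\varepsilon$, since the sum $\sum_n e^{-\frac{c_0}{2} \langle n \rangle^{2\varepsilon}}$ blows up as $\varepsilon \to 0^{+}$. This explains why the hypothesis $\varepsilon > 0$ is essential: it is precisely the down-weighting that turns the countable union bound over $\mathbb{Z}^2$ into a summable series without destroying the Gaussian tail.
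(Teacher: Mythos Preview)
Your proof is correct. The paper itself does not supply a proof of this lemma, treating it as a standard $\ell^\infty$ Gaussian bound and only recording the statement; your argument via the single-variable Gaussian tail estimate and a union bound over $n\in\mathbb{Z}^2$ is precisely the intended (and essentially only) route, and your handling of the summability and the case $K\le 1$ is fine.
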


Next we record a standard probabilistic estimate.
\begin{lem}[\protect{\cite[Lemma 3.1]{BT1}}] \label{lem:large_dev}
 Let $\{g_n\}_{n=1}^{\infty}$ be a sequence of complex-valued independent identically distributed (iid) mean-zero Gaussian random variables on a probability space $(\Omega, {\mathcal A}, \mathbb{P})$. Then there exists $C > 0$ such that for every $p \geq 2$ and every $\{c_n\}_{n=1}^{\infty} \in \ell^2(\mathbb{N}; \mathbb{C})$, we have
 \begin{equation*}
  \Bigl\| \sum_{n=1}^{\infty} c_n g_n(\omega) \Bigr\|_{L^\rho_\omega} \leq C \sqrt{\rho} \Bigl( \sum_{n=1}^{\infty} |c_n|^2 \Bigr)^{1/2}.
 \end{equation*}
\end{lem}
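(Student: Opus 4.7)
The plan is to reduce the problem to computing the $L^\rho$ norm of a single standard complex Gaussian. Since the $g_n$ are independent, mean-zero, complex Gaussians, any finite partial sum $S_N(\omega) := \sum_{n=1}^N c_n g_n(\omega)$ is itself a mean-zero complex Gaussian whose variance equals $\sigma_N^2 := \sum_{n=1}^N |c_n|^2$. This is the key structural fact: the class of (complex) Gaussian random variables is closed under linear combinations, so all the cancellation among the $g_n$'s is captured by the scalar $\sigma_N$.

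Next I would pass to the infinite sum. Because $\{c_n\} \in \ell^2$, the sequence $\{S_N\}$ is Cauchy in $L^2_\omega$, hence converges in $L^2$ (and almost surely, by the martingale convergence theorem applied to the real and imaginary parts) to some random variable $S$. A standard characteristic function argument shows that $S$ is again a centered complex Gaussian with variance $\sigma^2 := \sum_{n=1}^\infty |c_n|^2$. Equivalently, I can write $S \stackrel{d}{=} \sigma Z$ where $Z$ is a standard complex Gaussian.

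It then remains to bound $\|Z\|_{L^\rho_\omega}$ by $C \sqrt{\rho}$. This I would do by explicit computation: writing $Z = X + iY$ with $X, Y$ independent real $\mathcal{N}(0, 1/2)$ (say), one has $|Z|^2$ distributed as a scaled $\chi^2$ with $2$ degrees of freedom, i.e.\ an exponential random variable, so
\[
\EEE |Z|^\rho = c_0 \, \Gamma\!\left( \tfrac{\rho}{2} + 1 \right)
\]
for an absolute constant $c_0$. Stirling's formula yields $\Gamma(\tfrac{\rho}{2}+1)^{1/\rho} \lesssim \sqrt{\rho}$, giving $\|Z\|_{L^\rho_\omega} \leq C \sqrt{\rho}$ for all $\rho \geq 2$. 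Combining with homogeneity, $\|S\|_{L^\rho_\omega} = \sigma \|Z\|_{L^\rho_\omega} \leq C \sqrt{\rho}\, \bigl(\sum |c_n|^2\bigr)^{1/2}$, which is the claim.

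The proof is essentially routine; the only delicate point is the passage to the infinite sum, where one should be a bit careful that the a.s.\ and $L^\rho$ limits coincide. This is handled by noting that for each fixed finite $N$, the bound with $C\sqrt{\rho}$ and variance $\sigma_N$ holds, and then taking $N \to \infty$ using Fatou's lemma on the $L^\rho$ side and monotone convergence on $\sigma_N^2 \to \sigma^2$. No probabilistic wellposedness or PDE input is needed; this is purely a statement about Gaussian series.
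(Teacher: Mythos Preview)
Your argument is correct. Note, however, that the paper does not supply its own proof of this lemma: it is simply quoted from \cite[Lemma 3.1]{BT1} and used as a black box. So there is no proof in the paper to compare against directly.

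That said, it is worth remarking that your approach is more specialized than the one in the cited reference. You exploit the fact that the class of complex Gaussians is closed under linear combinations, reducing everything to a single Gamma-function computation. The Burq--Tzvetkov argument is a Khinchin-type inequality that works for any sequence of independent mean-zero random variables satisfying a uniform subgaussian tail bound (e.g.\ $\mathbb{P}(|g_n|>\lambda)\le Ce^{-c\lambda^2}$), not just Gaussians; it proceeds via moment estimates without ever identifying the law of the sum. Your route is shorter and cleaner in the Gaussian case, but it does not generalize to, say, Bernoulli or other bounded randomizations, whereas the BT1 proof does. For the purposes of this paper only Gaussians are used, so your argument is entirely sufficient.
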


We will also use the following variant of \cite[Lemma 4.5]{Tz10} to bound the probability of certain subsets of the probability space.
\begin{lem}\label{lem:large_dev2}
Let $F$ be a real valued measurable function on a probability space $(\Omega, \mathcal{A}, \mathbb{P})$. Suppose that there exists $\alpha > 0$, $N > 0$, $k\in \mathbb{N} \setminus \{0\}$ and $C > 0$ such that for every $\rho \geq \rho_0$ one has
\begin{align}
\|F\|_{L^\rho_\omega} \leq C N^{-\alpha} \rho^{\frac{k}{2}}.
\end{align}
Then, there exists $C_1$, and $\delta$ depending on $C$ and $\rho_0$ such that for $K > 0$
\begin{align}
\mathbb{P}( \omega \in \Omega : |F(\omega)| > K) \leq C_1 e^{-\delta N^{\frac{2\alpha}{k}} K^{\frac{2}{k}}}.
\end{align}
\end{lem}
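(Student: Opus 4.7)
The plan is to deduce the tail bound from the $L^\rho_\omega$ bounds via Chebyshev's inequality followed by optimization in $\rho$, which is the standard route for converting moment bounds into Gaussian-type deviation estimates.

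First I would apply Chebyshev: for any $\rho \geq \rho_0$,
\[
\mathbb{P}(|F(\omega)| > K) \leq K^{-\rho}\, \|F\|_{L^\rho_\omega}^\rho \leq \left(\frac{C N^{-\alpha} \rho^{k/2}}{K}\right)^{\rho}.
\]
The idea is now to choose $\rho$ to minimize the right-hand side. Setting the quantity inside the parentheses equal to $e^{-1}$ (up to a multiplicative constant) gives the candidate
\[
\rho^{\ast} := \frac{1}{e^{2/k} C^{2/k}} \bigl(N^{\alpha} K\bigr)^{2/k},
\]
so that $C N^{-\alpha} (\rho^\ast)^{k/2}/K = e^{-1}$. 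Plugging this $\rho^\ast$ back in yields a bound of the form $e^{-\rho^\ast} = \exp\bigl(-\delta N^{2\alpha/k} K^{2/k}\bigr)$ for $\delta = 1/(e^{2/k} C^{2/k})$, which is the desired estimate.

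The one technical nuisance, and the only real step requiring care, is that the optimal $\rho^\ast$ may be smaller than the threshold $\rho_0$ for which the hypothesis is valid. I would handle this by splitting into two regimes. If $\rho^\ast \geq \rho_0$, the computation above applies directly. If instead $\rho^\ast < \rho_0$, then $N^{2\alpha/k} K^{2/k}$ is bounded above by a constant depending only on $C$ and $\rho_0$, so the right-hand side $C_1 e^{-\delta N^{2\alpha/k} K^{2/k}}$ is bounded below by a positive constant; one then simply chooses $C_1$ large enough (depending on $C$ and $\rho_0$) so that the trivial bound $\mathbb{P}(|F|>K) \leq 1 \leq C_1 e^{-\delta N^{2\alpha/k} K^{2/k}}$ holds in this regime.

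No serious obstacle is expected; this is essentially the Paley--Zygmund/Kahane type argument used in \cite{BT1, Tz10}, and the only delicate point is bookkeeping the dependence of $C_1$ and $\delta$ on $C$ and $\rho_0$, which is transparent from the explicit formulas above.
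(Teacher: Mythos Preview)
Your proposal is correct and is exactly the standard Chebyshev-plus-optimization argument underlying \cite[Lemma~4.5]{Tz10}, which the paper cites in lieu of giving its own proof. There is nothing to compare: the paper omits the proof entirely, and your argument is the canonical one.
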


\begin{lem}
Let $1 \leq p, q < \infty$, then for all $\rho \geq \max(p, q)$ we have
\begin{equation}\label{eq: lp0}
\|\FF\|_{L_{\omega}^{\rho}L_{t}^{q} L^p_x ([0,1]\times \mathbb{R}^{2})}\lesssim_\rho \|f\|_{L_{x}^{2}}.
\end{equation}
In particular, there exists $C ,c > 0$ such that
\begin{align}
\mathbb{P}( \|\FF\|_{L_{t}^{q} L^p_x ([0,1]\times \mathbb{R}^{2})} > K) \leq C e^{-c K^2 / \|f\|_{L_{x}^{2}}^2}.
\end{align}
\end{lem}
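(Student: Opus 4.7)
The plan is to interchange norms using Minkowski's inequality (permitted because $\rho \geq \max(p,q)$), apply a pointwise Gaussian large deviation estimate in $\omega$, and then use the unit-scale Bernstein inequality \eqref{equ:unit_scale_bernstein} together with almost-orthogonality to sum back out over the frequency decomposition.

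First I would write $\FF(t,x,\omega) = \sum_{k \in \mathbb{Z}^2} g_k(\omega) (e^{it\Delta} P_k f)(x)$ and apply Minkowski twice to obtain $\|\FF\|_{L^\rho_\omega L^q_t L^p_x} \leq \|\FF\|_{L^q_t L^p_x L^\rho_\omega}$. For each fixed $(t,x)$, the series $\FF(t,x,\cdot)$ is a centered Gaussian series with coefficients $(e^{it\Delta} P_k f)(x)$, so Lemma \ref{lem:large_dev} yields the pointwise bound
\[
\|\FF(t,x,\cdot)\|_{L^\rho_\omega} \lesssim \sqrt{\rho}\, \Bigl( \sum_{k \in \mathbb{Z}^2} |(e^{it\Delta} P_k f)(x)|^2 \Bigr)^{1/2}.
\]

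In the main case $p, q \geq 2$, a further Minkowski step pulls the $L^q_t L^p_x$ norm inside the $\ell^2_k$ sum. Since $e^{it\Delta}$ commutes with $P_k$ and is an $L^2_x$ isometry, unit-scale Bernstein \eqref{equ:unit_scale_bernstein} gives
\[
\|e^{it\Delta} P_k f\|_{L^p_x} = \|P_k (e^{it\Delta} f)\|_{L^p_x} \lesssim \|P_k (e^{it\Delta} f)\|_{L^2_x} = \|P_k f\|_{L^2_x},
\]
uniformly in $t \in [0,1]$ and $k \in \mathbb{Z}^2$. Combined with the almost-orthogonality bound $\sum_k \|P_k f\|_{L^2_x}^2 \lesssim \|f\|_{L^2_x}^2$ (from Plancherel and the bounded-overlap property of the bumps $\psi_k$), this closes the first estimate \eqref{eq: lp0}. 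The case $q < 2$ can be handled by Hölder's inequality on the compact interval $[0,1]$, reducing to $q = 2$.

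Finally, the probabilistic tail bound follows from Chebyshev's inequality: for $\rho \geq \max(p, q, 2)$,
\[
\mathbb{P}\bigl(\|\FF\|_{L^q_t L^p_x} > K\bigr) \leq K^{-\rho}\,\mathbb{E}\|\FF\|_{L^q_t L^p_x}^\rho \leq \Bigl( \frac{C \sqrt{\rho}\,\|f\|_{L^2_x}}{K}\Bigr)^\rho,
\]
and optimizing in $\rho$ (choosing $\rho \sim K^2/\|f\|_{L^2_x}^2$ for $K$ sufficiently large, the bound being trivial otherwise) yields the Gaussian-type decay stated in the lemma. The main obstacle I anticipate is the range $1 \leq p < 2$, where the Minkowski step passing $L^p_x$ inside $\ell^2_k$ is no longer available and the embedding $L^2_x(\RRR^2) \hookrightarrow L^p_x(\RRR^2)$ fails globally; in this regime one would need to exploit spatial decay properties of each $e^{it\Delta} P_k f$ together with the effective independence of the summands, or reinterpret the estimate via duality against compactly supported test functions.
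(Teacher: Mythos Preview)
Your proof is correct and follows the same route as the paper's: Minkowski to push $L^\rho_\omega$ inside, the Gaussian moment bound of Lemma~\ref{lem:large_dev}, unit-scale Bernstein, and for the tail bound the paper simply cites Lemma~\ref{lem:large_dev2} whereas you reprove its content via Chebyshev and optimization in $\rho$. Your concern about the range $1 \leq p < 2$ is legitimate---the paper's sketch does not address it either---but immaterial, since every application in the paper (see \eqref{Lp}, \eqref{Lp2} and the arguments of Sections~\ref{sec:pi}--\ref{sec: sig}) takes $p \geq 2$.
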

\begin{proof}
We use Minkowski's inequality and Lemma \ref{lem:large_dev} to estimate
\begin{align}
\|\FF\|_{L_{\omega}^{\rho}L_{t}^{q} L^p_x ([0,1]\times \mathbb{R}^{2})} &\leq \|\FF\|_{L_{t}^{q} L^p_x ([0,1]\times \mathbb{R}^{2})L_{\omega}^{\rho}} \\
& \lesssim \left( \sum_k \|e^{it\Delta} P_k f\|_{L_{t}^{q} L^p_x ([0,1]\times \mathbb{R}^{2})} \right)^{1/2}.
\end{align}
The result then follows from  H\"older's inequality and the unit-scale Bernstein inequality, while the estimate on the probability follows from Lemma \ref{lem:large_dev2}.
\end{proof}

\begin{rem}
Essentially repeating the proof of the previous lemma, one may apply Minkowski and using the $\ell^{2}$ summability \eqref{eq: L2sum}, to improve \eqref{eq: lp0} into 
\begin{equation}\label{eq: lp00}
\bigl\|(\sum_{N}\|P_{N}\FF\|_{L_{t,x}^{p}([0,1] \times \RRR^2)}^{2})^{1/2}\bigr\|_{L_{\omega}^{\rho}}\lesssim_\rho 1.
\end{equation}
where $N\geq 1$ ranges over all dyadic integers. Via interpolation with the $L_{t}^{\infty}L_{x}^{2}$ bound and H\"older's inequality in time, up to an exceptional set of exponentially small probability, one has for $2\leq q\leq p < \infty$ that
\begin{equation}\label{Lp}
\|F\|_{L_{t}^{p}L_{x}^{q}([0,1]\times \mathbb{R}^{2})}\lesssim 1, 
\end{equation}
and 
\begin{equation}\label{Lp2}
\sum_{N}\|P_{N}\FF\|_{L_{t}^{p}L_{x}^{q}([0,1]\times \mathbb{R}^{2})}^{2}\lesssim 1.
\end{equation}
\end{rem}

Finally, we will need a multilinear Gaussian estimate. We state a slightly simplified version of this estimate compared to the reference since this will suffice for our purposes.

\begin{lem}[\protect{Cf. \cite[Proposition 2.4]{TTz}}]\label{lem:multi_gauss}
Let $\{g_n\}$ be iid mean-zero Gaussian random variables, and let 
\[
* = \{n_1, n_2, n_3\in \mathbb{Z}^{2} \,:\, n_2 \neq n_1, n_3\}.
\]
Consider
\begin{align}
	&G(\omega) =  \sum_{*}c(n_1, n_2, n_3) g_{n_1}(\omega) \overline{g}_{n_2}(\omega) g_{n_3}(\omega) 
\end{align}
where $c(n_1, n_2, n_3)$ are complex numbers. Then there exits $C, c > 0$ such that
\begin{equation}\label{eq: mgem1}
\mathbb{P}\{ |G| >  K \|G\|_{L^2_\omega} \} \leq C e^{ - c K^2}.
\end{equation}
\end{lem}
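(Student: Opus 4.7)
The plan is to reduce the tail estimate to a moment bound of the form $\|G\|_{L^\rho_\omega} \lesssim \rho^{3/2} \|G\|_{L^2_\omega}$ for all $\rho \geq 2$, and then to conclude by optimizing in Markov's inequality in the spirit of Lemma \ref{lem:large_dev2}. The key structural observation is that the restriction $n_2 \neq n_1, n_3$ defining $*$ ensures that $G(\omega)$ is a homogeneous polynomial of degree three in the underlying iid complex Gaussians which contains no diagonal contractions, and hence sits in the third complex Wiener chaos.

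For the moment estimate, I would pursue one of two standard routes. The cleanest is to invoke Gaussian hypercontractivity (Nelson's inequality): any element of the $k$-th Wiener chaos built from iid standard Gaussians satisfies $\|G\|_{L^\rho_\omega} \leq (\rho - 1)^{k/2} \|G\|_{L^2_\omega}$ for $\rho \geq 2$, which produces the required bound with $k = 3$. A more hands-on alternative is to expand $\mathbb{E}|G|^{2q}$ and apply the Isserlis--Wick pairing formula, using independence and the mean-zero property of the $g_n$'s. Under the diagonal restriction, only pairings which match indices \emph{across} distinct copies of the generic factor $c(n_1,n_2,n_3) g_{n_1} \bar g_{n_2} g_{n_3}$ survive; counting such pairings together with a Cauchy--Schwarz step against the $\ell^2$ norm of the coefficients $c(n_1,n_2,n_3)$ yields the desired polynomial-in-$\rho$ moment growth.

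With $A := \|G\|_{L^2_\omega}$ and the moment bound $\|G\|_{L^\rho_\omega} \leq C \rho^{3/2} A$ in hand, Markov's inequality gives
\[
\mathbb{P}\bigl\{ |G| > K A \bigr\} \leq (KA)^{-\rho} \|G\|_{L^\rho_\omega}^\rho \leq \bigl(C \rho^{3/2}/K\bigr)^\rho
\]
for every $\rho \geq 2$. Optimizing by choosing $\rho$ comparable to $(K/(eC))^{2/3}$ produces the subgaussian-type decay asserted in \eqref{eq: mgem1}, exactly as in the $k = 3$ case of Lemma \ref{lem:large_dev2}.

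I expect the main obstacle to be the combinatorial bookkeeping in the direct route, in particular verifying that the restriction $n_2 \neq n_1, n_3$ precisely excludes the contractions which would otherwise generate lower-order chaos contributions (and hence the ``wrong'' power of $\rho$ in the moment bound). Circumventing this by invoking Nelson's inequality reduces the argument to checking membership in the third chaos, which is immediate from the definition of $*$. Beyond that, the proof is a streamlined repetition of the large deviation scheme already deployed in Lemmas \ref{lem:large_dev} and \ref{lem:large_dev2}.
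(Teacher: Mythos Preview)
The paper does not actually prove this lemma; it is stated with a citation to \cite[Proposition 2.4]{TTz} and no argument is given, so there is no in-paper proof to compare against. Your strategy via hypercontractivity (Nelson's inequality applied to an element of the third Wiener chaos) followed by Markov optimization is precisely the standard route underlying the cited result, and your observation that the restriction $n_2 \neq n_1, n_3$ eliminates the diagonal contractions and places $G$ in the third chaos is the correct structural point.

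There is, however, a genuine gap in your final step. The optimization you carry out, choosing $\rho$ comparable to $(K/(eC))^{2/3}$ in the bound $(C\rho^{3/2}/K)^\rho$, yields a tail of order $e^{-cK^{2/3}}$, \emph{not} the $e^{-cK^2}$ literally written in \eqref{eq: mgem1}. A degree-three Gaussian chaos has at best sub-Weibull tails of order $2/3$; it cannot have subgaussian tails. This is exactly what Lemma~\ref{lem:large_dev2} with $k=3$ predicts, as you yourself note. The exponent $K^2$ in the displayed statement thus appears to be an imprecision in the paper, and indeed the only place this lemma is invoked (the treatment of Term~1 in the proof of Lemma~\ref{lem: lwplocalpro}) requires merely a tail of the form $e^{-K^c}$ for some $c>0$, for which $e^{-cK^{2/3}}$ is ample. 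You should flag this discrepancy explicitly rather than asserting that your argument delivers the bound as stated; as written, your concluding sentence claims something your own computation does not support.
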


\subsection{Elliptic objects}\label{sec:ell}
We recall, as detailed in the introduction, that analysis of Merle and Rapha\"el in  \cite{merle2005blow} for log-log blowup solutions begins with a geometric decomposition of the solution $u$, given by
\begin{equation}
u(t,x)=\frac{1}{\lambda(t)}(\qbb+\epsilon)\left(\frac{x-x(t)}{\lambda(t)}\right)e^{i\gamma(t)},
\end{equation}
where $\qbb$ is a certain elliptic object which is a modification of $Q$ and where $\epsilon$ is a priori small in $H^1_x(\RRR^2)$. Schematically, this implies a scaled and translated version of the ground state $Q$ is a good approximation for $u$, up to modulation. However, in practice, one considers a modification of $Q$ to capture the sharp log-log blowup dynamics \cite{merle2003sharp, merle2004universality, merle2006sharp}. This modification of $Q$ relies on certain elliptic objects, $Q_{b}, \qbb, \zeta_{b}$ and $\zbb$, which we describe in this subsection. We will not list all the properties of these objects, which are indeed crucial to log-log analysis but will not explicitly be used in this article since we rely on previous results which establish the existence of such solutions in the energy space. Instead we focus only on the properties which are most relevant to the current work and refer to \cite{merle2003sharp, merle2004universality, merle2006sharp} for more details. One may refer to \cite[Proposition 1 and Lemma 2]{merle2006sharp} for further details.

\medskip
Throughout this subsection, $b$ and $\eta$ will be used to denote small positive numbers, $C$ will denote a universal constant, and one should have in mind that $C\eta\ll 1$. We let 
\[
R_{b}:=\frac{2}{|b|}\sqrt{1-\eta}.
\]
Let $\qb$ be a modification of $Q$ which solves
\begin{equation}\label{eq:qbb}
\begin{cases}
\Delta \qb-\qb+ib\Lambda \qb+|\qb|^{2}\qb=0,\\
 Q_{b}e^{i\frac{b|y|^{2}}{4}}>0 \text{ in } B_{R_{b}},\\
 Q_{b}(R_{b})=0.
\end{cases}
\end{equation} 
Now, let  $R_{b}^{-}:=\sqrt{1-\eta}R_{b}\sim b^{-1}$ be a constant sightly smaller than $R_{b}$, and let $\phi_{b}(x)$ be a smooth cut-off function with $\phi_b(x) \equiv 1$ on  $|x|\leq R_{b}^{-}$ and  $\phi_b(x) \equiv 0$ for $|x|\geq R_{b}$. We define $\qbb$ be the cut-off version of $\qb$, namely $\qbb = \phi_{b} \qb$, and we let
\begin{equation}\label{eq:qb}
\Delta \qbb-\qbb+ib\Lambda \qbb+|\qbb|^{2}\qbb:=-\Psi_{b}.
\end{equation}
Note that $\qbb(x)$ decays exponentially as $|x|\rightarrow \infty$, thus asymptotically the nonlinearity of \eqref{eq:qb} vanishes. 
 
 Following the work of Merle and Rapha\"el \cite{merle2006sharp}, one introduces the tail $\zb$, which is the unique radial solution to
 \begin{equation}
 \begin{cases}
 \Delta \zeta_{b}-\zeta_{b}+ib\Lambda \zeta_{b}+ib\zeta_{b}=\Psi_{b},\\
 \zeta_{b}\in \dot{H}^{1}_x(\RRR^2).
 \end{cases}
 \end{equation}
 It turns out that $\zeta_{b}$ just misses $L_{x}^{2}(\RRR^2)$, or more precisely, if we define
 \begin{equation}\label{eq: gb}
 \Gamma_{b}:=\lim_{|y|\rightarrow \infty}y^{2}|\zeta_{b}|^{2},
 \end{equation}
then this limit exists and we have
 \begin{equation}
e^{-(1+C\eta)\frac{\pi}{b}}\leq \Gamma_{b}\leq e^{-\frac{\pi}{b}(1-C\eta)}.
 \end{equation}
The quantity $\Gamma_{b}$ appears frequently in the log-log blowup analysis, and this scale plays a crucial role. A useful heuristic to keep in mind is that all terms of size $\Gamma_{b}^{1+}$ are acceptable. For example, if one modifies $\zb$ into $\zb'$ so that $\|\zbb-\zbb'\|_{H^{1}}\lesssim \Gamma_{b}^{1+}$, then, heuristically, there are no difference between those two terms in the log-log analysis.

To overcome the failure of $L^{2}_x(\RRR^2)$ integrability of $\zb$, we introduce a cut-off version of this object, denote by $\zb$, as follows:  let $\psi$ be a bump function localized at $|x|\lesssim 1$ and let $a$ be a small number. Let 
\[
A=A_{b}:=e^{\frac{\pi}{b}}, \quad \psi_{A}(x):=\psi(\frac{x}{A}),
\]
and let 
\begin{equation}\label{eq:zbb}
\zbb=\psi_{A}\zb.
\end{equation}
Note that $\Gamma_{b}^{-a/2}\leq A_{b}\leq \Gamma_{b}^{-3a/2}$ and 
\begin{equation}
\int |\zbb|^{2}\leq \Gamma_{b}^{1-C\eta}.
\end{equation}
One also records 
\begin{equation}
\Delta \zbb-\zbb+ib\Lambda \zbb:=\Psi_{b}+F_{b}.
\end{equation}
The crucial fact about the tails, used essentially \cite[(4.20)]{merle2006sharp}, is that
\begin{equation}\label{eq: flux}
-Re(\zbb, \Lambda F_{b})\geq c\Gamma_{b}.
\end{equation}

We conclude this section by listing some useful estimates for $\qbb$. Most of the time, however, it will be enough to think of it as a function which decays exponentially, uniformly in $b$.
\begin{enumerate}
\item One has
\begin{equation}
|E(\qbb)|\lesssim \Gamma_{b}^{1-C\eta}, \qquad P(\qbb)=0.
\end{equation}
\item $\qbb$ is uniformly close to $Q$, and
\begin{equation}\label{eq: bclose}
\|e^{(1-\eta)\frac{\theta(|b||y|)}{|b|}}(\qbb-Q)\|_{C^{3}}\xrightarrow{b\rightarrow 0} 0,
\end{equation}
where 
\begin{equation}
\theta(r)=\mathbf{1}_{\{0\leq r\leq 2\}}\int_{0}^{r}\sqrt{1-\frac{z^{2}}{4}}dz+1_{r>2}\frac{\theta(2)}{2}r,
\end{equation}
and $\theta(2)=\frac{\pi}{2}$.
\item One has the following non-degeneracy  with parameter $b$
\begin{equation}
\biggl\|e^{(1-\eta)\frac{\theta(|b||y|)}{|b|},}(\frac{\partial}{\partial_{b}}\qbb+i\frac{|y|^{2}}{4}Q)\biggr\|\xrightarrow{b\rightarrow 0} 0.
\end{equation}
\item $\qbb$ has strictly supercritical mass and 
\begin{equation}
\|\qbb\|^{2}-\|Q\|_{2}^{2}\sim b^{2}.
\end{equation}
\end{enumerate}

\section{Preparation of initial data and setting up the bootstrap}\label{sec:in_data}
In this section, we describe the necessary steps in order to set up the main bootstrap lemma and prove the main theorem.
\subsection{Description of initial data and statement of main results}\label{subsec: ini}
Recall that we consider a randomized $L^2_x(\RRR^2)$ function $f^{\omega}$, given by
\begin{equation}\label{eq: randomized}
f^{\omega}(x)=\sum_{k}\int f_{k}g_{k}(\omega)\psi_{k}(\xi)e^{ix\xi}d\xi
\end{equation}
where $\{g_{k}\}_{k\in \ZZZ^{2}}$ are iid mean-zero complex Gaussian random variables, and where $\psi_k$ and $f_k$ are defined in \eqref{eq:psi_k} and \eqref{eq:p_k}. We recall that we assume that the $f_k$ satisfy the decay condition
\begin{equation}
|f_{k}|\leq \frac{1}{|k|}, \quad k\neq 0
\end{equation}
and normalization
\begin{equation}
\sum_{k}|f_{k}|^{2}=1.
\end{equation}
We will use $F$ to denote the linear evolution of the random data $f^\omega$, that is 
\begin{equation}\label{eq: linearevo}
F(t,x)=F^{\omega}(t,x)=e^{it\Delta}f^{\omega}.
\end{equation}

We let $a_{0}$ be the well-prepared initial data, given by
\begin{align}\label{eq:a0}
a_{0}=\frac{1}{\lambda_{0}}(\qbo+\epsilon_{0})(\frac{x-x_{0}}{\lambda_{0}}),
\end{align}
and one may, without loss of generality, take $x_{0}=0$. We make the following assumptions which will ensure that we fall in the bootstrap regime of the log-log dynamics:
\begin{itemize}
\item Smallness of $b_{0}$
\begin{equation}\label{eq: boini}
0<b_{0}\ll 1,
\end{equation}
\item smallness of $\lambda_{0}$
\begin{equation}\label{eq: lamoini}
0<\lambda_{0}\leq e^{-e^{2\pi/3b_{0}}} ,
\end{equation}
\item smallness of extra mass
\begin{equation}\label{eq: smallmassini }
\|\eo\|_{L_{x}^{2}}\ll 1,
\end{equation}
\item $H^{1}$ smallness of $\eo$,
\begin{equation}\label{eq: smallnss}
\int |\nabla \epsilon_{0}|^{2}+|\epsilon_{0}|^{2}e^{-|y|}\leq \Gamma_{b_{0}}^{3/4},
\end{equation}
\item control of energy and momentum
\begin{equation}\label{eq: energyini}
\lambda_{0}^{1/2}|E(a_{0})|\leq 1,
\end{equation}
\begin{equation}\label{eq: momeini}
\lambda_{0}^{1/2}|P(a_{0})|\leq 1
\end{equation}
\end{itemize}
and the following four orthogonality conditions
\begin{equation}\label{eq: inioth1}
(\epsilon_{1,0}, |y|^{2}\Sigma_{b_{0}})+(\epsilon_{2,0}, |y|^{2}\Theta_{b_{0}})=0,
\end{equation}
\begin{equation}\label{eq: inioth2}
(\epsilon_{1,0}, y\Sigma_{b_{0}})+(\epsilon_{2,0}, y\Theta_{b_{0}})=0,
\end{equation}
\begin{equation}\label{eq: inioth3}
-(\epsilon_{1,0},\Lambda \Theta_{b_{0}})+(\epsilon_{2,0}, \Lambda \Sigma_{b_{0}})=0,
\end{equation}
\begin{equation}\label{eq: inioth4}
-(\epsilon_{1,0}, \Lambda^{2}\Theta_{b_{0}})+(\epsilon_{2,0}, \Lambda^{2}\Sigma_{b_{0}})=0,
\end{equation}
where
\begin{equation}
\epsilon_{0}=\epsilon_{1,0}+i\epsilon_{2,0}, \quad \qbo=\Sigma_{b_{0}}+i\Theta_{b_{0}}
\end{equation}
and $(f,g)$ denotes the real $L^2_x$ inner product. We remark that such initial data $a_{0}$ are easy to construct by the work of Merle and Rapha\"el \cite{merle2006sharp}. Indeed, one simply finds $H^{1}_x(\RRR^2)$ initial data, with non-positive energy and mass slightly about that of the ground state, and evolves it under the flow of \eqref{eq:nls} until it is close enough to the blowup time. 

\medskip
Here and in the sequel, we assume that $f$ and $a_0$ satisfy the above conditions. We are now prepared to state our main result. 
\begin{thm}\label{thm: mainrigor}
Fix $f$ satisfying the above conditions, then there exists a universal constant $\lambda_0^* > 0$ such that for all $0 < \lambda_{0} <\lambda_0^*$, there exists a subset $\Sigma \subset \Omega$, and constants $C, c > 0$ so that
\[
\mathbb{P}(\Sigma) \geq 1 - C e^{- 1/\lambda_0^c},
\]
such that for all  $\omega \in \Sigma$, there exists a solution solution $u(t,x)$ to \eqref{eq:nls} with initial data $u_0 = a_0 + f^\omega$ which will blowup in finite time $0 < T=T_{\omega} \ll 1$ according to log-log law in the following sense: there are two small, fixed positive numbers $s, \delta$ so that
\begin{equation}
u(t,x)=a(t,x)+F(t,x), \quad  a(t,x)=\frac{1}{\lambda(t)}(\qbb+\epsilon)\bigl(\frac{x-x(t)}{\lambda(t)}\bigr), \quad F = e^{it\Delta} f^\omega
\end{equation}
and 
\begin{equation}
\lambda(t)^{-1}\sim \frac{\sqrt{\ln |\ln |T-t|}|}{\sqrt{T-t}}, \quad \|\qbb+\epsilon\|_{H^{s}}\sim 1,
\end{equation}
and for some $N(t)=\lambda(t)^{-1-\delta}$, 
\begin{equation}
\int |\nabla I_{N(t)\lambda(t)} \epsilon|^{2}+|\epsilon|^{2}e^{-|y|}\xrightarrow{t\rightarrow T} 0.
\end{equation}
\end{thm}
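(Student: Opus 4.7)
The plan is to derive Theorem~\ref{thm: mainrigor} from the bootstrap Lemma~\ref{lem: boot} mentioned in Section~\ref{sub: ov}. I would write the solution in the ansatz $u=a+F$, where $F=e^{it\Delta}f^{\omega}$ is the linear evolution of the random data and $a$ is a well-prepared $H^{1}$ part that will admit the standard Merle--Rapha\"el geometric decomposition. Plugging the ansatz into \eqref{eq:nls} shows that $a$ satisfies a perturbed cubic NLS in which the extra terms are polynomials of degree $1$ and $2$ in $F$ coupled to $a$. The first step is to choose a good subset $\Sigma\subset\Omega$ of probability at least $1-Ce^{-1/\lambda_{0}^{c}}$ on which $F$ satisfies every probabilistic Strichartz, unit-scale Bernstein, and multilinear Gaussian bound from Section~\ref{sec:prelim} uniformly on $[0,1]$; since the random bounds here carry Gaussian tails (via Lemmas~\ref{lem:large_dev} and \ref{lem:large_dev2}), the union bound over the countably many dyadic time scales that appear up to the blowup time is harmless.

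The second step is to run, on the $a$ part, the modulation decomposition
\[
a(t,x)=\frac{1}{\lambda(t)}(\qbb+\epsilon)\Bigl(\frac{x-x(t)}{\lambda(t)}\Bigr)e^{-i\gamma(t)},
\]
with $(b,\lambda,x,\gamma)$ chosen so the four orthogonality conditions \eqref{eq: inioth1}--\eqref{eq: inioth4} hold at every time. I would then postulate the bootstrap regime of \cite{colliander2009rough, merle2006sharp}: $\lambda\ll b$, $b$ slowly varying, $\int|\nabla\epsilon|^{2}+\epsilon^{2}e^{-|y|}\leq \Gamma_{b}^{3/4}$, plus frequency-localized energy and momentum controls $\lambda^{1/2}|E(I_{N}u)|+\lambda^{1/2}|P(I_{N}u)|\leq 1$ with $N=\lambda^{-1-\delta}$, and the packing estimate $t_{k+1}-t_{k}\lesssim k\lambda(t_{k})^{2}$ on the dyadic time steps where $\lambda\sim 2^{-k}$. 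These assumptions are already known (by \cite{merle2006sharp, colliander2009rough}) to self-improve provided one can prove the two perturbation inputs listed in Section~\ref{sub: ov}.

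The third step -- and the only genuinely new one -- is to verify these two inputs. On a single LWP interval $[t_{k},t_{k+1}]$ of length $\lesssim k\lambda(t_{k})^{2}$, I would treat $a$ as the unknown of a perturbed NLS driven by $F$, use the bilinear Strichartz estimate of Proposition~\ref{prop:bilin} together with the transfer principle and the random bounds \eqref{Lp}--\eqref{Lp2} to absorb the $aF^{2}$, $a^{2}F$, $F^{3}$ source terms into the Strichartz constant; the crucial gain is that whenever a singular factor $\lambda^{-1}h(x/\lambda)$ is paired against $F$, a change of variable produces a factor $\lambda^{\sigma}$ and the interaction becomes perturbative. For the modulation ODEs one adapts the local virial inequality \eqref{eq: lv} to $a$; the random source terms, by the same $\lambda^{\sigma}$ mechanism and the orthogonality conditions, contribute at most $\Gamma_{b}^{1+}$-type errors that are absorbed into the right hand side of \eqref{eq: upper}, so the sharp log-log upper bound still holds, and the flux argument based on \eqref{eq: flux} with the truncated tail $\zbb$ yields the matching lower bound.

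The main obstacle, as in \cite{colliander2009rough}, is the I-method control of $E(I_{N(t)}u)$ and $P(I_{N(t)}u)$. Because $f^{\omega}\in L_{x}^{2}$ is at the endpoint, deterministic I-method estimates do not close: the standard multilinear correction terms involving $I_{N}F$ would be only borderline under Strichartz. Here I would exploit the probabilistic improvement from Lemma~\ref{lem:multi_gauss} for the cubic Gaussian chaoses, together with the Littlewood--Paley square function bound \eqref{Lp2}, to gain a small power that replaces the missing Sobolev regularity; in addition, because $F$ is approximately equidistributed in time on the scale of a single LWP interval, the cumulative contribution over $\sim k$ consecutive LWP intervals of size $\lambda(t_{k})^{2}$ only costs a logarithmic factor in $k$, which is dominated by $\lambda(t_{k})^{\delta}$. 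Combining the per-interval probabilistic wellposedness with this I-method estimate gives the bootstrap improvement, hence Lemma~\ref{lem: boot}, and a standard continuity/open-closed argument then promotes the bootstrap to global-in-$[0,T)$ control and yields the claimed blowup asymptotics and the $\dot H^{1}\cap L^{2}_{loc}$ convergence of $I_{N(t)\lambda(t)}\epsilon$ to zero.
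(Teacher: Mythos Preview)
Your proposal is correct and follows essentially the same route as the paper: reduce Theorem~\ref{thm: mainrigor} to the bootstrap Lemma~\ref{lem: boot} via the ansatz $u=a+F$, establish probabilistic LWP on each dyadic interval and I-method energy/momentum control, then close the bootstrap using the Merle--Rapha\"el virial/Lyapunov machinery with the random forcing treated as a perturbation of size $\lambda^{\alpha}$. Two small points where the paper's execution differs from your sketch: the $I$-operator and the energy are applied to $a$ (which solves the forced NLS) rather than to $u$, the random-forcing errors in the modulation and virial estimates are controlled only in time average ($\int \mathcal F\,ds\lesssim \lambda^{\alpha_2}$) rather than pointwise as $\Gamma_b^{1+}$, and the pure $|F|^2F$ contribution in the energy increment is disposed of by an algebraic cancellation between the commutator pieces rather than by invoking Lemma~\ref{lem:multi_gauss} there (which, as the paper notes, would otherwise force one to sum extra exceptional sets over all $N$).
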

One may refer to Section \ref{sub: imethod} for the definition and properties of the $I$-operator.
\begin{rem}
The large probability in the statement of the theorem can be understood in two ways. If one fixes $a_{0}$, and studies the evolution of $a_{0}+\aaa f^{\omega}$, then with probability $\geq 1-e^{-1/\alpha^c}$ the conclusion of Theorem \ref{thm: mainrigor} holds, provided $\aaa$ is sufficiently small. Alternatively, one may fix $f$, but consider $\lambda_{0}$ and $b_{0}$ sufficiently small, since the definition of $a_{0}$ is essentially be given by $\lambda_{0}$ and $b_{0}$. Then the conclusion of Theorem \ref{thm: mainrigor} holds with probability $\geq 1-e^{-1/\lambda_0^c}$. 
\end{rem}
\begin{rem}
At first glance, the statement of the main theorem may seem surprising, since one could choose $a_{0}$ so concentrated that one does not even need the $L_{x}^{2}(\RRR^2)$ smallness of $f^{\omega}$, or alternatively smallness of $\aaa$. One should still view the free evolution of the random data as a (small) perturbation (around a complicated object)  since the requirement that $a_{0}$ be concentrated, together with the fact that randomized functions are equidistributed in space still decouples these terms from one another, and thus the resulting interaction is still expected to be small.
\end{rem}
\begin{rem}
 Just as the $H^{1}_x(\RRR^2)$ case and the $H^{s}_x(\RRR^2)$ case for $s>0$, one can study the convergence of the concentration point $x(t)$ to establish that the blowup point is well defined, and one can prove (non)concentration properties of the radiation $\epsilon$ at the blowup point. We refer interested readers  to \cite{colliander2009rough} since these arguments follow in an identical manner in our setting.
\end{rem}

The dynamics described in the main theorem will be characterized by the bootstrap lemma in the next subsection.

\subsection{Bootstrap set-up}

Let $u$ be the solution to \eqref{eq:nls} with initial data $u_{0}$, we will use the ansatz 
\begin{equation}\label{eq: ans}
\begin{aligned}
u(t,x)&=a(t,x)+F(t,x),\\
a(t,x)&=\frac{1}{\lambda(t)}(\qbb+\epsilon)(\frac{x-x(t)}{\lambda(t)})e^{-i\gamma(t)},
\end{aligned}
\end{equation}
Via continuity of the flow in $L_{x}^{2}$ and our initial orthogonality condition \eqref{eq: inioth1} to \eqref{eq: inioth4}, we can ensure, at least locally in $t\in [0,T_{0}]$, $T_{0}$ small, one has orthogonality conditions
\begin{equation}\label{eq: modoth1}
(\epsilon_{1}, |y|^{2}\Sigma_{b})+(\epsilon_{2}, |y|^{2}\Theta_{b})=0,
\end{equation}
\begin{equation}\label{eq: modoth2}
(\epsilon_{1}, y\Sigma_{b})+(\epsilon_{2}, y\Theta_{b})=0,
\end{equation}
\begin{equation}\label{eq: modoth3}
-(\epsilon _{1}, \Lambda \Theta_{b})+(\epsilon_{2},\Lambda \Sigma_{b})=0,
\end{equation}
\begin{equation}\label{eq: modoth4}
-(\epsilon_{1}, \Lambda^{2}\Theta_{b})+(\epsilon_{2}, \Lambda^{2}\Sigma_{b})=0.
\end{equation}

and furthermore 
\begin{equation}
x(0)=x_{0}, \lambda(0)=\lambda_{0}, \gamma(0)=0.
\end{equation}

where we use notation $\epsilon=\epsilon_{1}+i\epsilon_{2}$ and $\qbb=\Sigma_{b}+i\Theta_{b}$.

We will focus on the evolution of $a$, and one has 
\begin{equation}\label{eq: eqfora}
i\partial_{t}a+\Delta a=-|a+F|^{2}(a+F)=-|a|^{2}a-(|a+F|^{2}(a+F)-|a|^{2}a).
\end{equation}

We will need to two parameter  $s$, $\delta$ in the rest of the article. 
We always assume 
\begin{equation}
0<\delta\ll s\ll1,
\end{equation}
and in particular for any small constant $c$ involved in our analysis, one has 
\begin{equation}
(1-cs)(1+\delta)<1.
\end{equation}

\medskip
Now we are ready to state the main bootstrap lemma. Let $u$ solve \eqref{eq:nls}  in $[0,T]$ with initial data $u_{0}$ described as in Subsection \ref{subsec: ini}, with ansatz \eqref{eq: ans} so that \eqref{eq: modoth1}-\eqref{eq: modoth4} holds.
Since  $\lambda(t)$ is essentially monotone decreasing, or more precisely by bootsrap assumption \eqref{eq: bamono},  we can divide $[0,T]$ into $\cup_{k=k_{0}}^{k_{1}=1}[t_{k}, t_{k+1}]$, such that$\lambda(0)\sim 2^{-k_{0}}$ and $\lambda(T)\sim 2^{-k_{1}}$, $k_{0}\leq k\leq k_{1}$, and $\lambda(t)\sim 2^{-k}t\in [t_{k}, t_{k+1}]$.
\begin{lem}[Bootstrap Lemma]\label{lem: boot}
Suppose that $u(t,x)$ solves \eqref{eq:nls} on $[0,T]$ and satisfies the following bootstrap assumptions for $t\in [0,T]$:
\begin{equation}\label{eq: basmall}
0<b(t), \quad \|\epsilon\|_{L^{2}}+b(t)<\alpha,
\end{equation}
\begin{equation}\label{eq: bamono}
\forall t\leq t'\in [0,T], \quad \lambda(t')\leq \frac{3}{2}\lambda(t)
\end{equation}
\begin{equation}\label{eq: balambdasmall}
\lambda(t)\leq e^{-\frac{1}{\Gamma_{b}}^{2/3}},
\end{equation}
\begin{equation}\label{eq: bah1control}
\int |I_{N(t)\lambda(t)}\epsilon|^{2}+|\epsilon|^{2}\leq \Gamma_{b}^{2/3},
\end{equation}
\begin{equation}\label{eq: balwpinteval}
t_{k+1}-t_{k}\lesssim k\lambda(t_{k})^{2}\sim k2^{-2k},
\end{equation}
then
\begin{equation}\label{eq: besmall}
0<b(t), \quad  \|\epsilon\|_{L^{2}}+b(t)<\frac{\alpha}{2},
\end{equation}
\begin{equation}\label{eq: bemono}
\forall t\leq t'\in [0,T], \quad \lambda(t')\leq \frac{5}{4}\lambda(t)
\end{equation}
\begin{equation}\label{eq: belambdasmall}
\lambda(t)\leq e^{-\frac{1}{\Gamma_{b}}^{3/4}},
\end{equation}
\begin{equation}\label{eq: beh1control}
\int |I_{N(t)\lambda(t)}\epsilon|^{2}+|\epsilon|^{2}\leq \Gamma_{b}^{3/4},
\end{equation}
\begin{equation}\label{eq: belwpinteval}
t_{k+1}-t_{k}\lesssim \sqrt{k}\lambda(t_{k})^{2}\sim \sqrt{k}2^{-2k},
\end{equation}
\end{lem}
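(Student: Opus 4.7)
My plan is to adapt the Colliander--Rapha\"el bootstrap scheme from \cite{colliander2009rough} to the random perturbation setting. Substituting the ansatz $u=a+F$ into \eqref{eq:nls}, one obtains
\[
i a_t + \Delta a = -|a|^2 a - \mathcal N(a,F), \qquad \mathcal N(a,F) = |a+F|^2(a+F) - |a|^2 a,
\]
so that the equation for $a$ is a perturbed version of the equation studied by Merle--Rapha\"el. Inserting the geometric decomposition $a = \frac{1}{\lambda}(\qbb + \epsilon)(\tfrac{x-x(t)}{\lambda})e^{-i\gamma}$ and differentiating the orthogonality conditions \eqref{eq: modoth1}--\eqref{eq: modoth4} in time yields a system of modulation equations for $(b_s, \lambda_s/\lambda, x_s/\lambda, \gamma_s -1)$ coupled to an evolution equation for $\epsilon$. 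Compared to the deterministic case, every such equation carries new source terms obtained by testing the cross terms in $\mathcal N(a,F)$ against localized rescaled functions of the form $\frac{1}{\lambda}h(\cdot/\lambda)$; by scaling and H\"older, each such pairing gains a positive power of $\lambda$ times the small random norm $\|F\|_{L^p_tL^q_x}$ provided by \eqref{Lp}--\eqref{Lp2}, and is therefore perturbative by \eqref{eq: balambdasmall}.

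To upgrade the geometric estimates \eqref{eq: besmall}, \eqref{eq: bemono}, \eqref{eq: belambdasmall}, I would rerun the local virial argument
\[
b_s \geq H(\epsilon) - 2\lambda^2 E(I_{N(t)} u) - \Gamma_b^{1-C\eta} + o(1)\!\int\! \bigl(|\nabla \epsilon|^2 + \epsilon^2 e^{-|y|}\bigr),
\]
where now $E(I_{N(t)}u)$ and $P(I_{N(t)}u)$ replace the undefined $E(u), P(u)$, and where the extra cross terms are absorbed into the error by the smallness discussed above. Coercivity of $H$ modulo six directions (four absorbed by \eqref{eq: modoth1}--\eqref{eq: modoth4}, and two by truncated energy/momentum) gives $b_s \gtrsim -\Gamma_b^{1-C\eta}$; sharpening with the truncated tail $\zbb$ and the flux bound \eqref{eq: flux} upgrades this to the sharp log-log law, yielding the improved $\lambda$-smallness \eqref{eq: belambdasmall} and the refined monotonicity \eqref{eq: bemono}. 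The $L^2$ smallness of $\epsilon$ in \eqref{eq: besmall} follows from near-conservation of mass of $a$, once one checks that the cross-term contributions to $\frac{d}{dt}\int |a|^2$ are integrable in time and gain a power of $\lambda$.

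The main obstacle, and the truly new ingredient, is the improvement of \eqref{eq: bah1control} to \eqref{eq: beh1control}, which requires bounding the positive part of $E(I_{N(t)}u)$ and $P(I_{N(t)}u)$ by roughly $\lambda^{-2+2s}$ on each local wellposedness window $[t_k, t_{k+1}]$ of length $\lesssim k\lambda(t_k)^2$. Here I would run an I-method energy increment computation
\[
\partial_t E(I_N u) = \mathrm{Re}\!\int \overline{\partial_t I_N u}\bigl(|I_N u|^2 I_N u - I_N(|u|^2 u)\bigr)
\]
but insert the decomposition $u = a+F$ so that every multilinear term involves at most two copies of $a$, with the remaining factors coming from $F$. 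For the purely deterministic piece, the classical I-method commutator estimates of \cite{colliander2009rough} apply verbatim. For the mixed terms, probabilistic wellposedness on each window, obtained by combining Strichartz and bilinear Strichartz (Proposition \ref{prop:bilin}) with the random Strichartz bounds \eqref{eq: lp00}--\eqref{Lp2} and the transfer principle via $X^{s,b}$, provides uniform-in-time smallness. The fully random cubic term is controlled by the multilinear Gaussian estimate Lemma \ref{lem:multi_gauss}. The hardest point is that these terms live at endpoint regularity; probabilistic improvements over the deterministic Strichartz estimate compensate for this loss and produce an extra factor of $\lambda(t_k)^{0+}$ per interval, which when summed over the $\mathcal O(k)$ intervals near scale $2^{-k}$ still yields the needed gain, by the fact that $\sum_k k \lambda(t_k)^{0+}$ remains summable.

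Finally, the LWP-interval count \eqref{eq: belwpinteval} is upgraded from \eqref{eq: balwpinteval} by showing that the Strichartz norm of $a$ on each window near scale $2^{-k}$ is controlled by $\sqrt{k}$ rather than $k$: this follows from the sharpened log-log monotonicity \eqref{eq: bemono} combined with near-conservation of the I-modified mass and the random data estimates. Iterating these improvements across $k_0 \leq k \leq k_1$ closes the bootstrap on the exceptional set of full probability provided by the union of the finitely many random-data concentration inequalities used above.
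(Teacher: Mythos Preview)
Your overall architecture---modulation analysis on $a$ with the random cross-terms in $\mathcal N(a,F)$ treated as perturbative via the $\lambda$-gain from localization, a truncated virial/tail argument, I-method energy increments combined with probabilistic LWP on each dyadic window, and summation over $k$---is essentially the paper's scheme. There is, however, a genuine gap in how you set up the I-method step.

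You propose to control $E(I_{N(t)}u)$ and $P(I_{N(t)}u)$ and to run the increment computation on $u$. The paper instead works with $E(I_{N(t)}a)$ and $P(I_{N(t)}a)$, and this choice is not cosmetic. Since $F=e^{it\Delta}f^\omega$ lies only in $L^2_x$ and in no positive $H^s$, the kinetic piece of $E(I_N u)=E(I_N(a+F))$ contains $\tfrac12\|\nabla I_N F\|_{L^2}^2$; for the admissible data (e.g.\ $|f_k|\sim |k|^{-1}\log^{-2}|k|$) this is of order $N^2$ up to logarithms, i.e.\ $\sim\lambda^{-2(1+\delta)}$, which exceeds the $\lambda^{-2+\alpha_1}$ threshold required to make $\lambda^2 E$ perturbative in the local virial. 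No amount of increment control repairs this: the \emph{initial value} is already too large. The paper circumvents this by running the increment on $a$, which solves the forced equation \eqref{eq: eqfora} and, crucially, enjoys the probabilistic $X^{1,b}$ bound $\|I_N a\|_{X^{1,b}[I]}\lesssim\lambda(t_k)^{-1-\epsilon_1}$ of Lemma~\ref{lem: lwplocalpro}; the increment $\partial_t E(I_N a)$ then splits into $A_I+A_{II}+B_I+B_{II}$, and the paper exploits a cancellation of the fully random $|F|^2F$ contributions between $A_I$ and $A_{II}$ (and between $B_I,B_{II}$) that your outline does not mention. Relatedly, your claim that after expansion ``every multilinear term involves at most two copies of $a$'' is inaccurate: the terms with three $a$'s and one $F$ (the ``one random piece'' case in Section~\ref{sec: sig}) are new relative to \cite{colliander2009rough} and are in fact the ones requiring the most delicate bilinear-Strichartz/summation work.

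A secondary omission: the upgrade \eqref{eq: bah1control}$\to$\eqref{eq: beh1control} is not delivered by the virial plus tail $\zbb$ alone. The paper combines the modified virial (Lemma~\ref{lem: movirial}) with the $L^2$-dispersion-at-infinity estimate (Lemma~\ref{lem: L2disper}) and the almost mass conservation for $a$ (Lemma~\ref{lem: slowmass}) into a Lyapunov functional $\mathcal J$, and it is the monotonicity \eqref{eq: ly} of $\mathcal J$ that actually closes \eqref{eq: beh1control}. Your sketch mentions near-mass-conservation of $a$ but not the $L^2$ flux control or the Lyapunov construction, without which this particular bootstrap estimate does not close.
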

\begin{rem}
Formally speaking, the asymptotic dynamics gives 
\[
b_{s}\sim -\Gamma_{b},
\]
\[
\int |I_{N(t)\lambda(t)}\epsilon|^{2}+|\epsilon|^{2}\lesssim \Gamma_{b},
\] 
\[
\lambda\sim e^{-\Gamma_{b}^{-1}}
\]
\[
t_{k+1}-t_{k}\sim \ln\ln k
\]
 and the mass conservation law gives 
 \[
 \|\epsilon\|_{2}+b^{2}\lesssim \|a(0)\|_{L_{x}^{2}}-\|Q_{0}\|_{L_{x}^{2}}.
 \]
  Also note the conditions $b_{s}\sim- \Gamma_{b}$ essentially ensures $b$ stays positive for all time.
\end{rem}

\section{Probabilistic Local Wellposedness} \label{sec:pi}
We note that while the cubic nonlinear Schr\"odinger equation \eqref{eq:nls} is \emph{deterministically} well-posed in $L^2_x(\RRR^2)$, we are seeking nonlinear smoothing and quantitative estimates, which are not true for general deterministic data. As such, we exploit several properties of the free evolution of the random data, as well as multilinear estimates involving such random functions.

The analysis in this section has many similarities to the random data analysis of Bourgain from \cite{B96}. Indeed, our choice of function to randomize is intended to mimic the random data appearing in \cite{B96}. However, several new ingredients are needed in our analysis to carry these estimates, and in particular, we need some new arguments in order to adapt Bourgain's result to the non-compact setting.

\medskip
For technical reasons, we fix $\varepsilon_0 > 0$ and $b=\frac{1}{2}+\eez$. We will also fix
\begin{equation}\label{eq: puretr}
\eez\ll\eeo\ll\eet\ll \delta\ll s\ll 1.
\end{equation}
and any $\epsilon$ involved in the analysis should satisfy $\epsilon\ll \epsilon_{0}$. 

\begin{rem}
One may assume, for example, $\eez\ll \eet^{s/10}$.  The purpose of these parameters are to overcome a technical issue arising from the scaling of $X^{s,b}$ spaces, specifically letting  $h_{\lambda}:=\frac{1}{\lambda}h(\frac{t}{\lambda^{2}}, \frac{x}{\lambda})$, and $b=\frac{1}{2}+\epsilon_{0}$, one has
\[
\|h_{\lambda}\|_{X^{s,b}}\lesssim \|h\|_{X^{s,b}}\lambda^{-s-2\epsilon_{0}}.
\]
We note that in general, the scaling properties of $X^{s,b}$ do not pose problems since our local wellposedness and energy estimates will be sub-critical in nature, and we do not need to derive end-point type  estimates where $\epsilon$ losses would be forbidden. 
\end{rem}

The aim of the current section is to establish improved\footnote{It is standard that the problem we treat in this article is deterministically locally well-posed with intervals of length $\sim \lambda(t_{k})^{-2+\eet}$, if one only care $L^{2}$ level well-posedness} probabilistic local wellposedness. Specifically, we will establish a result analogous to \cite[Lemma 3.3]{colliander2009rough} with randomized  data. 

In \cite{colliander2009rough}, every LWP interval $[t_{k}, t_{k+1}]$ is split into intervals $\cup_{j}[\tau_{k}^{j}, \tau_{k}^{j+1}]$ such that 
\[
|\tau_{k}^{j}-\tau_{k}^{j+1}|\sim \lambda(\tau_{k}^{j})^{-2}\sim \lambda(t_{k})^{-2}\sim 2^{-2k}.
\]
Due to the aforementioned technical issues relating to the scaling of $X^{s,b}$ spaces, we will instead split $[t_{k}, t_{k+1}]$ into intervals
$\cup_{j=1}^{J_{k}}[\tau_{k}^{j}, \tau_{k}^{j+1}]$, so that 
\begin{equation}\label{eq: sizeoflwp}
|\tau_{k}^{j}-\tau_{k}^{j+1}|\sim \lambda(t_{k})^{-2+\eet}
\end{equation}
and note there are at most $k\lambda(t_{k})^{-\eet} $ such many LWP intervals within $[t_{k}, t_{k+1}]$, thanks to the bootstrap assumption \eqref{eq: balwpinteval}.

Thus, let $0 < T \ll 1$ and let $I=[\tau_{k}^{j}, \tau_{k}^{j+1}]\subset [t_{k}, t_{k+1}]\subset [0, T]$ with $|I|\sim \lambda(t_{k})^{2-\eet}$. Recall $u$ solves 
\begin{equation}
	\begin{cases}
	iu_{t}+\Delta u=-|u|^{2}u, \quad (x,t)\in \mathbb{R}^{2}\times I,&\\
	u(\tau_{k}^{j})=a(\tau_k^{j})+F(\tau_{k}^{j}),&
	\end{cases}
\end{equation}
where $a$ is of form \eqref{eq: ans} and satisfies the bootstrap assumption \eqref{eq: bah1control}. We note that \eqref{eq: bah1control} implies
\begin{equation}
	\|a(\tau_{k}^{j})\|_{H^{s}}\sim \frac{1}{\lambda^{s}(\tau_{k}^{j})}\sim \frac{1}{\lambda^{s}(t_{k})}.
\end{equation}

We now turn to the statement of the probabilistic local estimates.

\begin{lem}\label{lem: lwplocalpro}
Let $f^\omega$ be the randomization defined in \eqref{eq: randomized}. Fix $p = \infty-$, and $q =4$, and let $\Sigma_1\subseteq \Omega$ be a subset so that \eqref{ell_infty}, \eqref{Lp} and \eqref{Lp2} hold. Then there exists a set $\Sigma_2 \subseteq \Omega$ satisfying
\[
\mathbb{P}(\Sigma_2^c) \lesssim e^{-|\tau_{k}^{j}-\tau_{k}^{j+1}|^{-c}}
\] 
so that for every $\omega \in \Sigma_1 \cap \Sigma_2$, if $u$ solves \eqref{eq:nls} with initial data $a_0 + f^\omega$, then one has 
\begin{equation}\label{eq: lwpae}
\|a\|_{X^{s,b}[I]}=\|u-F\|_{X^{s,b}[I]}\lesssim \frac{1}{\lambda(\tau_k)^{s+\epsilon_{1}}},
\end{equation}
and
\begin{equation}\label{eq: lwpinae}
\|I_{N(\tau_{k}^{j})}a\|=\|I_{N(\tau_{k}^{j})}(u-F)\|_{X^{1,b}[I]}\sim \frac{1}{\lambda(t_{k})^{1+\epsilon_{1}}}.
\end{equation}
\end{lem}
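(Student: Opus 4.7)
The plan is to set up a fixed-point argument for $a$ in $X^{s,b}[I]$, and separately for $I_{N(\tau_k^j)}a$ in $X^{1,b}[I]$, treating $F = e^{it\Delta}f^\omega$ as a prescribed source. The Duhamel formula for $a = u - F$ reads
\[
a(t) = e^{i(t-\tau_k^j)\Delta} a(\tau_k^j) - i \int_{\tau_k^j}^t e^{i(t-s)\Delta}\bigl(|a+F|^2(a+F)\bigr)(s)\,ds.
\]
The linear piece contributes $\lesssim \|a(\tau_k^j)\|_{H^s} \sim \lambda(t_k)^{-s}$, using the structure $a = \frac{1}{\lambda}(\qbb + \epsilon)(x/\lambda)$ and the bootstrap bound \eqref{eq: bah1control}. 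For the $I_N$ version, since $a$ has Fourier support essentially at $|\xi| \lesssim \lambda(t_k)^{-1}$ while $N(\tau_k^j) \sim \lambda(t_k)^{-1-\delta}$, the operator $I_{N(\tau_k^j)}$ acts essentially as the identity on $a$, giving $\|I_N a(\tau_k^j)\|_{H^1} \sim \lambda(t_k)^{-1}$. The goal is therefore to show that the Duhamel term contributes no more than $\lambda(t_k)^{-s-\epsilon_1}$ (respectively $\lambda(t_k)^{-1-\epsilon_1}$).

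To estimate the nonlinearity, I would expand $|a+F|^2(a+F)$ into the purely deterministic cubic $|a|^2 a$ and seven mixed terms each containing at least one factor of $F$. For $|a|^2 a$, apply the standard $X^{s,b}$ trilinear estimate (coming from the $L^4_t L^4_x$ Strichartz estimate plus the transfer principle) and then H\"older in time to gain a factor $|I|^{\alpha} \sim \lambda(t_k)^{(2-\epsilon_2)\alpha}$ with $\alpha$ slightly larger than $s$. This is the subcritical margin for working at $H^s$ above the $L^2$-critical scaling on $\RRR^2$, and it more than absorbs the $X^{s,b}$-rescaling loss $\lambda^{-2\epsilon_0}$ flagged in the remark after \eqref{eq: puretr}; it permits closure of the contraction at radius $R \sim \lambda(t_k)^{-s-\epsilon_1}$ and motivates the hierarchy $\epsilon_0 \ll \epsilon_1 \ll \epsilon_2 \ll \delta \ll s$.

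For the mixed terms I will work on a high-probability subset $\Sigma_1 \cap \Sigma_2$ on which the probabilistic bounds \eqref{Lp}, \eqref{Lp2} hold along with any additional multilinear Gaussian bounds needed for the cubic interactions (as supplied by Lemma \ref{lem:multi_gauss}); the exceptional probability $\mathbb{P}(\Sigma_2^c) \lesssim e^{-|I|^{-c}}$ follows from Lemma \ref{lem:large_dev2}. Terms such as $|a|^2 F$, $a|F|^2$, $|F|^2 F$ in $X^{0,b-1}$-type norms are then controlled by combining the $L^4_{t,x}$ bound on $F$ from \eqref{Lp} with $L^4_{t,x}$ Strichartz norms on $a$ via H\"older. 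At $H^s$ regularity (and at $H^1$ in the $I_N$ version) one cannot place derivatives onto $F$ since $f^\omega \notin H^s$ almost surely; instead, I would distribute the derivatives onto $a$ via fractional Leibniz, and for any residual bilinear interaction $\langle \nabla \rangle^s a \cdot P_N F$ apply the bilinear Strichartz estimate of Proposition \ref{prop:bilin} together with the dyadic refinement \eqref{Lp2}. Because $a$ is frequency-localized near $|\xi| \sim \lambda(t_k)^{-1}$, bilinear Strichartz contributes a factor $\sim (\lambda(t_k)^{-1}/N)^{1/2}$ when $N \gg \lambda(t_k)^{-1}$, making the dyadic sum over high-frequency pieces of $F$ absolutely convergent.

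The main obstacle I expect is the delicate balance between the $L^2$-critical scaling of the nonlinearity on $\RRR^2$, the small regularity $s > 0$ at which the argument closes, and the roughness of $F$, for which pointwise Sobolev control is unavailable. The narrow subcritical window provided by $|I|\sim \lambda(t_k)^{2-\epsilon_2}$ must absorb every loss in the scheme, including the $X^{s,b}$-rescaling loss $\lambda^{-2\epsilon_0}$ and the derivative losses placed onto $a$ in the probabilistic mixed terms. Once these trilinear estimates are in place, the contraction on a ball of radius $\lambda(t_k)^{-s-\epsilon_1}$ in $X^{s,b}[I]$, together with the parallel contraction for $I_{N(\tau_k^j)}a$ in $X^{1,b}[I]$ at radius $\lambda(t_k)^{-1-\epsilon_1}$, produces the fixed point and delivers \eqref{eq: lwpae} and \eqref{eq: lwpinae}.
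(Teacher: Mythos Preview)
Your overall architecture matches the paper's: Duhamel, contraction in $X^{s,b}[I]$, and separate handling of the purely deterministic cubic, the mixed terms, and the purely random cubic. The treatment of $|a|^2a$ and of the mixed terms via bilinear Strichartz plus probabilistic $L^p$ bounds on $F$ is the right idea, though you should not rely on the claim that ``$a$ is frequency-localized near $|\xi|\sim\lambda(t_k)^{-1}$'': only the $\qbb$ piece enjoys this, while $\epsilon$ carries high-frequency tails controlled only in $H^s$ via \eqref{eq: bah1control}. The paper instead performs a full dyadic decomposition of both $a$ and $F$ and runs bilinear Strichartz case by case (seven cases, according to which factor sits at the highest frequency and whether $N_1\sim N_2$ or $N_1\ll N_2$).

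The genuine gap is in your handling of $|F|^2F$. Placing all three copies of $F$ into $L^4_{t,x}$ yields only an $X^{0,b-1}$ bound; it cannot supply the $s$ derivatives needed for $X^{s,b-1}$, and since $f^\omega\notin H^\sigma$ for any $\sigma>0$ almost surely, you cannot simply differentiate $F$. Invoking Lemma~\ref{lem:multi_gauss} is necessary but not the whole story: the paper decomposes $|F|^2F$ into three pieces according to the resonance structure $\{n_2\neq n_1,n_3\}$, $\{n_1=n_2=n_3\}$, and $\{n_2\in\{n_1,n_3\}\}$. The non-resonant piece is where the exceptional set $\Sigma_2$ actually arises: one applies the multilinear Gaussian bound at each large dyadic scale $N_1$ (dropping a set of probability $\sim e^{-N_1^c}$, summed over $N_1\geq N_{1,0}\sim |I|^{-c}$) to reduce to a lattice-point count under the constraint $\langle n_2-n_1,n_2-n_3\rangle=O(1)$, and it is this counting, together with the assumption $|f_k|\leq 1/|k|$, that produces the nonlinear smoothing. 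The diagonal piece uses $|f_k|\leq 1/|k|$ directly in $L^\infty_tH^s_x$, and the third piece has the special structure $\widetilde\theta(t,x)\,F$ with $\widetilde\theta$ spectrally supported near the origin, after which bilinear Strichartz closes. None of these mechanisms is visible in your sketch, and without them the key estimate $\|\eta_{|I|}|F|^2F\|_{X^{s,b-1}}\lesssim 1$ does not follow; nor does your attribution of $\Sigma_2$ to Lemma~\ref{lem:large_dev2} (a linear large-deviation bound) explain why an interval-dependent exceptional set is needed at all.
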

Note that in particular, \eqref{eq: lwpinae} implies 
\begin{equation}\label{eq: directmodify}
\|I_{N(T)}a\|_{X^{1,b}[I]}\lesssim \left(\frac{N(T)}{N(t_{k})}\right)^{1-s}\left(\frac{1}{\lambda(t_{k})}\right)^{1+\epsilon_{1}},
\end{equation}
see \cite[(3.14) and (3.20)]{colliander2009rough}.

\begin{rem}\label{rem:prob_bds_no_lambda}
We note that we may establish an identical result and additionally obtain that
\[
\mathbb{P}(\Sigma_1^c) \lesssim e^{-1/\lambda_0^c}.
\]
for some $c > 0$. Indeed, fix $p = \infty- $, and let $\Sigma_1 \subseteq \Omega$ be such that \eqref{ell_infty}, \eqref{Lp} and \eqref{Lp2} hold with constant $\sim \lambda_0^{-c_1}$ for some $c_1 > 0$ small. Then up to redefining $\varepsilon_1$, we are able to absorb this additional loss into estimates \eqref{eq: lwpae} and \eqref{eq: lwpinae}. We additionally note that such a subset is independent of $k$ and $I = [\tau_k^j, \tau_k^{j+1}]$, Hence, to simplify our arguments, we will instead assume \eqref{ell_infty}, \eqref{Lp} and \eqref{Lp2} hold with a fixed ($\lambda_0$-independent) constant.
\end{rem}

\begin{rem}\label{rem: prolemworks}
On the whole interval $[0,T]$, the set  that one needs to drop arising from the subset $\Sigma_2$ in Lemma \ref{lem: lwplocalpro} contributes total probability\footnote{The $c$ may change line by line and may not be the same as in Lemma \ref{lem: lwplocalpro}.} bounded by
\begin{equation}
\sum_{k=k_{0}}^{k_{+}}ke^{-2^{ck}}\leq \sum_{k=k_{0}}^{k_{+}}ke^{-2^{ck}}\lesssim Ce^{-2^{\frac{c}{2}}k_{0}}
\end{equation} 
Thus, by making $k_{0}$ large enough (i.e. making $\lambda_{0}$ small enough), one can ensure that up to a set of small probability, for every $I=[\tau_{k}^{j}, \tau_{k}^{j+1}]\subset [t_{k}, t_{k+1}]\subset [0,T]$, the conclusion of Lemma \ref{lem: lwplocalpro} holds.
\end{rem}

\begin{rem}\label{rem: xsb}
As we will see, the proof reduces to controlling the nonlinearity $|a + F|^{2}(a+F)$, and in particular, the term $|F|^{2}F$ is the most difficult to control. 

While the term $|a|^{2}a$ essentially follows from standard deterministic theory, we need to  introduce parameters $\epsilon_{0}, \epsilon_{1}, \epsilon_{2}$ in \eqref{eq: puretr} for the following reasons: we will need to rescale $a$ to $\lambda a(\lambda^{2}t, \lambda x)$ so that it is normalized in $X^{s,b}$, perform the standard the deterministic local theory, and then scale back. This generates an extra error $\lambda(t_{k})^{-C\epsilon_{0}}$ due to the fact $X^{s, b}$ is not scale invariant, resulting in an extra loss $\lambda(t_{k})^{-\epsilon_{1}}$.

The term $|a|^{2}F$ also essentially follows from deterministic local theory since we are able to distribute derivatives using bilinear Strichartz estimates, and have sufficient smoother functions to do so. However, we need the smallness of the interval to close this estimate, and hence we shrink the interval by an extra $\lambda(t_{k})^{\epsilon_{2}}$ factor. We will not focus too intently on these parameters since we wish to emphasize the treatment of the terms $|F|^{2}F$ and $|F|^{2}a$  (up to complex conjugates), however we point of that any loss of form of $\lambda(t_k)^{-\epsilon_{2}}$ is acceptable in the estimates because of the smallness of the time interval.
\end{rem}

\begin{proof}[Proof of Lemma \ref{lem: lwplocalpro}]
Recall that we use the ansatz \eqref{eq: ans}, and that $a$ solves the difference equation
\begin{equation}\label{eq: diffa}
\begin{cases}
ia_{t}+\Delta a=|a + F|^{2}(a+F),\quad t\in [\tau^{j}_{k},\tau_{k}^{j+1}],\\
\| a(\tau_k, x)\|_{H^s} \sim \lambda(\tau_k)^{-s} .
\end{cases}
\end{equation}
Without loss of generality and by a time translation, we may take $\tau_{k}^{j}=0$. Let $\eta(t)$ be a smooth cut-off, with  $\eta(t)\equiv1$ when $|t|\leq 1$, and $\eta(t)\equiv 0,$ for $|t|\geq 2$. Let $\eta_{\beta}(t)=\eta(t/\beta)$. We denote by $\widetilde{a}$ the extension of $a$ to the real line. By Duhamel's formulation, we need to estimate
\[
a(t,x) = e^{i(t - \tau_k)}a(\tau_k, x)  - i \int_{\tau_k}^t e^{-i(t - s)} \bigl(|a + F|^{2}(a+F) \bigr) ds,
\]

The linear part of $a(t,x)$ can be handled with the standard $X^{s,b}$ estimate, using the form of  $a(\tau_{k}^{j})$. For the inhomogeneous nonlinear estimate we need to control
\begin{equation}\label{eq: Xsbsmoothing}
\begin{aligned}
&\left\|\int_{\tau_k}^t e^{-i(t - s)} \bigl(|a + F|^{2}(a+F) \bigr) ds \right\|_{X^{s, b}(I)} \\
&\lesssim \left\| \eta_{|I|} \int_{\tau_k}^t e^{-i(t - s)} \bigl(|\widetilde{a} + F|^{2}(\widetilde{a}+F) \bigr) ds \right\|_{X^{s, b}} \\
&\lesssim \left\|  \eta_{I}(t) | \widetilde{a} + F|^{2}( \widetilde{a}+F) |\right\|_{X^{s, b-1}} \\
&\lesssim \left\| \eta_{I}(t) (| \widetilde{a} + F|^{2}( \widetilde{a}+F) - |F|^2 F) \right\|_{X^{s, b-1}}  + \left\| \eta_{I}(t) |F|^2 F \right\|_{X^{s, b-1}} 
\end{aligned}
\end{equation}

Our main goal is to prove that given $\tilde{a}$ which satisfies 
\[
\|\tilde{a}\|_{X^{s,b}}\lesssim \left(\frac{1}{\lambda(\tau_{k}^{j})}\right)^{s},
\]
one has that \eqref{eq: Xsbsmoothing} is bounded 
\begin{equation}\label{eq: goal1}
\frac{1}{2} \left(\frac{1}{\lambda(\tau_{k}^{j})}\right)^{s}.
\end{equation}
The extra $1/2$ factor yields, in the usual manner, that the solution map is a contraction. Indeed, this follows from the fact that the $X^{s,b}$  and random data analysis involved is sub-critical in nature, and that we are working on a small interval. We note that in order to establish \eqref{eq: goal1}, it will suffice to prove that \eqref{eq: Xsbsmoothing} is bounded by 
\begin{equation}\label{eq: goal2work}
\left(\frac{1}{\lambda(\tau_{k}^{j})}\right)^{s+\epsilon_{1}}
\end{equation}
since we are  working on intervals of (extra) small length, and the extra smallness of time interval $\lambda(t_{k})^{\eet}$ will be able to beat the $\epsilon_{1}$ loss, as remarked in Remark \ref{rem: xsb}. In the sequel, we will not distinguish between $a$ and $\tilde{a}$, since they will be treated and estimated in a same way.  We finally remark, there is a simple way to gain smallness of $X^{s,b}$ by localizing time, i.e. to trade part of $b$ derivative to estimate $X^{s,b'}$ for some $\frac{1}{2}<b'<b$. This will \emph{never} be involved in our analysis, however, because the maximum allowable difference between $b$ and $b'$ is bounded by $\epsilon_{0}$, which is too small to over come the extra loss in \eqref{eq: goal2work}.

\medskip
Thus, we focus on establishing \eqref{eq: goal2work}, and we begin with the term $\eta_{I}(t)| F|^2F$. In light of our discussion in Remark \ref{rem:prob_bds_no_lambda}, we will prove that 
\begin{equation}\label{eq: fgood}
\left\|  \eta_{|I|}(t)|F|^2 F \right\|_{X^{s, b-1}} \lesssim 1.
\end{equation}
Moreover, for the majority of the proof, we will in fact prove that 
\begin{equation}
\left\| \eta(t) |F|^2 F \right\|_{X^{s, b-1}} \lesssim 1,
\end{equation}
and we note that we can replace the term $\eta_{|I|}(t)$ with $\eta(t)$ since $X^{s,b}$ spaces are well behaved under time localization. Additionally, we will occasionally abuse notation, and use that $\eta^{3}\simeq\eta$, which will enable us to replace $F$ with $\eta(t) F$ as needed. Time localization is only needed when we argue that the extra subset we drop has probability $\lesssim e^{-|\tau - \tau_{j+1}|^{-c}}$. We will revisit this later.

\medskip
Let
\[
* = \{n_1, n_2, n_3\in \mathbb{Z}^{2} \,:\, n_2 \neq n_1, n_3\}
\]
and set
\[
h_k = e^{it \Delta} \widecheck{\psi}_k,
\]
then $|F|^2F$ can be written as 
\begin{align*}
&\underbrace{\sum_{*}  \iiint g_{n_1} \overline{g}_{n_2} g_{n_3} f_{n_1}\overline{f}_{n_2}f_{n_3} \psi_{n_1}(\xi_1) \overline{\psi}_{n_2}(\xi_2) \psi_{n_3}(\xi_3) e^{i (\xi_1 -\xi_2 + \xi_3) x - (|\xi_1|^2 - |\xi_2|^2 + |\xi_3|^2) t }}_{\textup{Term 1}}\\
& - \underbrace{\sum_{n} \iiint |g_{n}|^2 g_{n} |f_n|^2 f_n \psi_{n}(\xi_1) \overline{\psi}_{n}(\xi_2) \psi_{n}(\xi_3) e^{i (\xi_1 -\xi_2 + \xi_3) x - (|\xi_1|^2 - |\xi_2|^2 + |\xi_3|^2) t}}_{\textup{Term 2}} \\
& + \underbrace{2 \sum_{n_1, n_3} |f_{n_1}|^2 |g_{n_1}|^2 |h_{n_1}|^2 f_{n_3} g_{n_3} h_{n_3}}_{\textup{Term 3}}.
\end{align*}
We estimate these terms separately, beginning with the easiest.
\subsection*{Term 2}
As in \cite{B96} ,  we will directly estimate the $L_{t}^{\infty}H_{x}^{s}$ norm of this term. A simple triangle inequality gives
\begin{equation}
\begin{aligned}
& \left\| \langle \xi \rangle^s  \sum_{n}  |g_{n}|^2 g_{n} |f_n|^2 f_n \iiint \delta(\xi - \xi_1 + \xi_2 - \xi_3 \psi_{k}(\xi_1) \psi_{k}(\xi_2) \psi_{k}(\xi_3)  \right\|_{L^2_\xi} \\
& \lesssim    \left( \sum_{n}  \langle n \rangle^s \langle n \rangle^{3\varepsilon} |n|^{-3} \right)\lesssim 1
\end{aligned}
\end{equation}
where we have used the $\ell^\infty$ Gaussian bound \eqref{ell_infty}. This is summable in dimension $d=2$ provided $s < 1$.

\subsection*{Term 1} 
This is the term which typically appears in random data analysis, and can usually be used to illustrate what improvements one obtains for random data, see \cite{B96} for more details. Here, since we are not working with the NLS on a (rational) torus, one cannot directly reduce the problem into the same counting estimates as \cite{B96}. On the other hand, since we are on Euclidean space, we can now take advantage of the bilinear Strichartz estimates in Lemma~\ref{prop:bilin}, to help with analysis.

\medskip
Below we assume $|\xi_i| = |k_i| + O(1) \sim N_i$ for $i = 1,2,3$ and without loss of generality, set $N_{1}\geq N_{2}\geq N_{3}$, where $N_{i}$ are dyadic integers. We use notation $F_{i}:=F_{N_{i}}:=P_{N_{i}}F, i=1,2,3$. We first perform several reductions. We note that we may assume that
\begin{align}\label{eq:large_n3}
N_{3}\geq N_{1}^{99/100}
\end{align}
otherwise by bilinear Strichartz estimates, we obtain (recall $p$ is always large):
\begin{equation}
\begin{aligned}
\| \eta(t) F_1 F_2 F_3\|_{X^{s, b-1}}&\lesssim \sup_{\|h\|_{X^{s,1-b}}=1}\int F_{1}F_{2}F_{3}\bar{h} \\ & \lesssim N_1^s  \|F_1 F_3\|_{L_{t}^{2}L^2_x}  \|F_2\|_{L_{t,x}^{p}} \\
&\leq N_1^s   \left( \frac{N_3}{N_1} \right)^{1/2} \|F_3\|_{L^p_{t,x}} ,
\end{aligned}
\end{equation}
and using \eqref{Lp}, we may sum ($N_{1}\geq N_{2}\geq N_{3}$) provided
\[
N_1^{s - \frac{1}{2} + \frac{2}{p} } N_3^{\frac{1}{2} } \leq N_1^{s - \frac{1}{2}+ \frac{198}{100p} } N_1^{\frac{99}{200} } \lesssim 1, 
\]
which can be done by choosing $s > 0$ sufficiently small so that
\[
s - \frac{1}{200} + \frac{198}{100p} < 0.
\]

Proceeding, we will estimate this expression by reducing the problem into counting problems.  Following Bourgain, \cite{B96}, we start with a standard reduction. Here, we need to replace the $F$ by $\eta(t) F$. By definition of the $X^{s,b}$ space, we need to control
\begin{equation}\label{eq: fff}
\begin{aligned}
&\left\| \langle \tau - |\xi|^2 \rangle^{b-1} \langle \xi \rangle^s \mathcal{F}_{\tau,\xi}(\eta(t)\textup{Term 1})  \right\|_{L^2_\tau L^2_\xi},
\end{aligned}
\end{equation}
We let $\mu = \tau - |\xi|^2$ and we first claim we only need to control the region
\begin{equation}
\mu\ll N_{1}^{10s}.
\end{equation}
Indeed, one may use dual estimates to estimate \eqref{eq: fff}. For the deterministic theory, one needs to pair a function $h$ such that $\|h\|_{X^{0,1-b}}=1$, and the full $1-b=1/2+\epsilon_{0}$  ($X^{s,b}$ type) derivatives are needed, since one needs to control $\|h\|_{L_{t,x}^{4}}$.  Here, the random data allows us to beat the usual Strichartz estimates, and we are able to place each copy of $F$ into $L_{t,x}^{\infty-}$, and hence one only needs control of $\|h\|_{L_{t,x}^{3+}}$, which by interpolation only requires $1/3+\epsilon$ ($X^{s,b}$ type) derivatives. Thus, if one is in the case $\|\mu\|\geq N_{1}^{10s}$, the gain in the $X^{s,b}$ smoothing will compensate the $N_{1}^{s}$ loss in the space derivative. See equations \cite[(30) and (35)]{B96}. Moreover, we may focus on the case $\mu=O(1)$ and sum different part via triangle inequality, by paying extra $N_{1}^{Cs}$ loss, note $C$ will be large but we still have $Cs\ll 1$. 

Going back to \eqref{eq: fff}, we first expand 
$\mathcal{F}(\eta(t)\textup{Term 1})$
\begin{align}
& \iint e^{-i x \cdot \xi} e^{- i \tau t} \eta(t)\iiint  \psi_{n_1}(\xi_1) \psi_{n_2}(\xi_2) \psi_{n_3}(\xi_3) e^{i (\xi_1 -\xi_2 + \xi_3) x - (|\xi_1|^2 - |\xi_2|^2 + |\xi_3|^2) t}\\
& =  \iiint  \hat{\eta}(\tau -|\xi_1|^2 - |\xi_2|^2 + |\xi_3|^2) \delta (\xi - \xi_1 + \xi_2 - \xi_3) \psi_{n_1}(\xi_1) \psi_{n_2}(\xi_2) \psi_{n_3}(\xi_3).
\end{align}
We substitute this expression into \eqref{eq: fff}, and we recall that
\[
|\xi_{1}-\xi_{2}+\xi_{3}|^2 - |\xi_1|^2 + |\xi_2|^2 -  |\xi_3|^2 = 2 \langle \xi_2 - \xi_1, \xi_2 - \xi_3 \rangle.
\]

Now, proceeding we need to handle the estimates separately for $N_1 \geq N_{1,0}$, and $N_1 < N_{1,0}$ for some $N_{1,0}$ which we will determine below. When $N_1 \geq N_{1,0}$, this is where we drop the extra set of small probability, $\Sigma_2^c$, mentioned in the statement of the lemma. This extra argument is (more or less) standard, but we provide a sketch here. We fix such an $N_1$, and we use the multilinear Gaussian estimate of Lemma \ref{lem:multi_gauss} with constant $K = N_1^{Cs}$ to replace Term 1 by its $L^2_\omega$ norm by dropping an \emph{extra} set of probability $ \leq e^{-N_{1}^{c(\epsilon)}}$, where $c(\epsilon) > 0$ is a small $\epsilon$-dependent constant. Ultimately we need to control
\begin{equation}\label{eq: okcount}
N_{1}^{2Cs}\left( \sum_{*} \frac{1}{|n_1|^2} \frac{1}{|n_2|^2} \frac{1}{|n_3|^2}\right)^{1/2}
\end{equation}
where 
\[
* = \{n_1, n_2, n_3, n_2 \neq n_1, n_3, \langle n_{2}-n_{3}, n_{2}-n_{1}\rangle=O(1)\}.
\]
Recall that by restricting to the case $\mu=O(1)$ we lose an extra $N_{1}^{Cs}$, and we a priori have $|f_{n_{i}}| \leq \frac{1}{|n_{i}|}$. As in \eqref{eq:large_n3}, we only consider the case $N_{3}\geq N_{1}^{99/100}$. 

When $|N_{2}-N_{3}| < N_{3}^{1/10}$, for fixed $n_{1}, n_{2}$, there will be at most $N_{3}^{1/5}$ many $N_{3}$, and we may use that $N_3 \geq N_1^{\frac{99}{100}}$ to sum
\[
\sum_{N_1, N_2, N_3, \, |N_{2}-N_{3}| <  N_{3}^{1/10}}   N_{1}^{2Cs}\frac{1}{|n_1|^2} \frac{1}{|n_2|^2} \frac{1}{|n_3|^2} \lesssim  N_1^{2Cs}  N_1^{-(2 -\frac{1}{5}) \cdot \frac{99}{100} }.
\]
which is acceptable for $s$ sufficiently small.

When $|N_{2}-N_{3}|\geq N_{3}^{1/10}$, we mimic the counting in \cite[Lemma 1]{B96}.  Fixing $n_{2}$ and $n_{3}$, we note there could be at most $N_{1}^{2} / N_{3}^{1/10}$ many $n_1$. Indeed, let 
\[
n_{1}-n_{3}=(c_{1}, c_{2}), \quad n_{2}-n_{3}=(b_{1}, b_{2}),
\]
and assuming, for example, that $b_{2}\geq N_{3}^{1/10}$, and fixing $c_{1}$, there can be at most $N_{1}/N^{1/10}_{3}$ many $c_{2}$, and at most $N_{1}$ many $a_{1}$. Hence, we may bound \eqref{eq: okcount} by $N_{3}^{-1/10}$ or $N_{1}^{-1/20}$, since $N_3 \geq N_1^{99/100}$, and we obtain a bound which is summable $s$ sufficiently small. Since we must drop an extra subset for every fixed $N_1 \geq N_{1,0}$, after summation we have that the probability of the subset we drop is $\leq e^{-(N_{1,0})^c}$. 

\medskip
For $ N_{1}\leq N_{1,0}$, we use \eqref{ell_infty}, and then we argue in a purely deterministic manner, using the fact the interval is (extra) short, of length $\sim \lambda(t_k)^{-2+\epsilon_{2}} $ to close. Here, we need to use the cut-off $\eta_{|I|}(t)$. To close these estimates, we fix $N_{1,0}\sim \lambda(t_{k})^{-\tilde{c}(\epsilon)}$, where again $\tilde{c}(\epsilon) > 0$ is another small, $\epsilon$-dependent constant. This yields the stated bound on $\mathbb{P}(\Sigma_2^c)$, recalling how we defined the length of the time intervals. See also the discussion below \cite[(46)]{B96}.

\subsection*{Term 3}
This term is the most  distinct from the analysis in \cite{B96}.  Indeed, in \cite{B96}, a Wick-ordering is applied and this term does not appear at all.  We remark that one can still apply a phase transform to cancel this term, but such a phase, unlike \cite{B96}, will be a function rather than a number, and will not leave the NLS invariant. The key difference between our setting and Bourgain's is that our initial data lies at $L^2_x(\RRR^2)$ regularity, and hence we do not have to control the same divergences which appear for data in the support of the invariant Gibbs measure considered by Bourgain.

We recall that we are considering
\[
\sum_{n_1, n_3} |f_{n_1}|^2 |g_{n_1}|^2 |h_{n_1}|^2 f_{n_3} g_{n_3} h_{n_3}
\]
we let
\[
\widetilde{\theta}(t,x,\omega) = 2 \sum_{n_1} |f_{n_1}|^2 |g_{n_1}|^2 |h_{n_1}|^2 
\]
and note that this term is equal to
\[
\widetilde{\theta}(t,x,\omega)  F.
\]
Moreover, we observe that 
\[
\mathbb{E} \left( \sum_{n_1} |f_{n_1}|^2 |g_{n_1}|^2 \right) < \infty,
\]
and hence almost surely,
\[
\{f_{n_1} g_{n_1}  \}_{n_1 \in \mathbb{Z}^2} \in \ell^2
\]
and up to an exceptional set from \eqref{ell_infty}, we have that
\[
 |f_{n_1} g_{n_1}| \lesssim \frac{|n_1|^\varepsilon}{|n_1|}.
\]
Now, observing that
\[
|h_{k}|^2 = e^{it\Delta} \widecheck{\psi}_k \,\overline{e^{it\Delta} \widecheck{\psi}_k} ,
\]
and using that the free evolution does not affect the Fourier support, for each $k$, this term $|h_k|^2$ is supported in a ball of size two around the origin by convolution of the supports.  And one indeed have $|h_{k}|^{2}=|h_{0}(t,x-kt)|^{2}$ and $h_{0}$ is smooth. 
Thus, $\theta(x,t)$ is also smooth since $\sum |f_{k}|^{2}\lesssim 1$.

\medskip
We need to estimate 
\[
 \| \widetilde{\theta}(t,x,\omega)  F \|_{L^2_t H^s_x }.
\]
and in light of the observations above, it suffices to estimate the expression
\[
\| \widetilde{\theta}(t,x,\omega)  |\nabla|^s F \|_{L^2_t L^2_x }.
 \]
First observe since $|h_{k}|^{2}$ are all frequency localized around $1$, we apply $L^{2}$-orthogonality to 
derive
\begin{equation}
\begin{aligned}
&\| \widetilde{\theta}(t,x,\omega)  |\nabla|^s F \|_{L^2_t L^2_x }
\lesssim 
  &\sum_{k  \in \mathbb{Z}^2} \| \widetilde{\theta}(t,x ) |\nabla|^s P_k f \|_{L^2_t L^2_x}^2 
  \end{aligned}
\end{equation}
Now, noting that $h_\ell$ enjoys unit-scale Bernstein estimates (and hence lies in $L^\infty_{t,x}$) we obtain
\begin{align}
& \sum_{k  \in \mathbb{Z}^2} \| \widetilde{\theta}(t,x )e^{it\Delta} P_{k}f\|_{L^2_t L^2_x}^2 \\
& \lesssim \sum_{k}  \left( \sum_{\ell } |f_\ell g_\ell |^2 \|h_\ell\|_{L^\infty_{t,x}} \| e^{it \Delta} \widecheck{\psi_\ell}  \, e^{it\Delta} (g_k\psi_{k})(\omega) |k|^{s} f_{k}\|_{L^2_t L^2_x} \right)^2.
\end{align}
Now, we apply bilinear Strichartz estimate of Proposition \ref{prop:bilin}, and conjugate with Galilean symmetry for 
\[
e^{it \Delta} \widecheck{\psi_\ell}  \, e^{it\Delta} (g_k\psi_{k})(\omega),
\]
and plugging in the $\ell^\infty$ Gaussian bound \eqref{ell_infty}, we can estimate this expression by
\begin{align}
\sum_{k} |f_{k}|^2  \left( \sum_{\ell} |f_\ell |^2 |k|^{s + \varepsilon} \frac{1}{\langle k - \ell \rangle^{1/2}}  \right)^2.
\end{align}
Note that $|f_{k}|^{2}$ is summable in $k$.

Now, fix $k$, then if $|k- \ell| > k / 2$, then provided $s + \varepsilon < \frac{1}{2}$, this expression is bounded. Alternatively, if $|k - \ell| \leq k / 2$, then we use the fact that $\| |\ell | f_\ell \|_{\ell^\infty} \leq C$ and that $\ell \sim k$ to obtain
\begin{align}
 &\sum_{\ell, \, |\ell - k | \leq k /2} |f_\ell |^2 |k|^{s + \varepsilon} \frac{1}{\langle k - \ell \rangle^{1/2}} \\
 &\lesssim \sum_{\ell, \, |\ell - k | \leq k /2} k^{-2} |k|^{s + \varepsilon} \frac{1}{\langle k - \ell \rangle^{1/2}} \\
 & \simeq k^{-2} |k|^{s + \varepsilon} |k|^{3/2},
\end{align}
which is bounded provided, again, that $s + \varepsilon < 1/2$.

\medskip
We now attend to the terms involving $\tilde{a}$. For notational convenience, we will still use $a$ to denote $\tilde{a}$. Again, we will use the notation $a_{i}:=a_{N_{i}}:=P_{N_{i}}a$, and $N_{i}$ is a dyadic integer, and similarly for $F_{j}, j=1,2,3$. We will have to deal with multiple cases, depending on the frequency at which the random function is appearing.  Before proceeding, we note that since we are working on a interval with length $\lesssim \lambda(t_{k})^{2}$, and since by assumptions on the subset of the probability space, we can use H\"older's inequality in time to break the scaling and derive, for example,
\begin{equation}\label{eq: verylocal}
\|F\|_{L_{t,x}^{4}[I]}\lesssim |I|^{1/4-}\lesssim |\lambda(t_{k})|^{1/2-}
\end{equation}
This will be frequently used in the analysis below.

\subsection*{Case 1: $\|F_{N_1} F_{N_2} a_{N_3} \|$}

By duality, we estimate the expression
\[
\sum_{N_1\gg N_2 \geq N_3} N_{1}^{s}\int F_{1}F_{2}a_{3}h
\]
for $h \in X^{0, 1-b}$. When $N_1 \sim N_2$, we estimate this expression using bilinear Strichartz estimates and Cauchy-Schwarz in the highest frequency:
\begin{align}
&\sum_{N_1 \sim N_2 \geq N_3} \left(\frac{N_3}{N_1} \right)^{\frac{1}{2} - s} \|F_{1} \|_{L^\infty_t L^2_x} \|F_{2} \|_{L^{4+}_{t,x}}  N_3^s \|a_{3} \|_{L^\infty_t L^2_x} \|h\|_{L^{4-}_{t,x}} \\
&\lesssim \sum_{N_1 \sim N_2 \geq N_3} \left(\frac{N_3}{N_1} \right)^{\frac{1}{2} - s} \|F_{1} \|_{L^\infty_t L^2_x} \lambda(t_k)^{\frac{1}{2} - } \|F_{2}\|_{L^{\infty-}_tL^{4+}_{x}}  N_3^s \|a_{3} \|_{L^\infty_t L^2_x} \|h\|_{L^{4-}_{t,x}} \\
\end{align}
which is summable.

When $N_{1}\gg N_{2}$, we use duality with $h \in X^{0, 1-b}$, and we decompose $h$ into dyadic blocks $h_{N_{4}}$, and now we have $N_{4}\sim N_{1}$, and once again by bilinear Strichartz estimates
\begin{align}
&\sum_{N_1\gg N_2 \geq N_3} N_{1}^{s}\int F_{1}F_{2}a_{3}h \\
&\lesssim \sum_{N_4 \sim N_1 \gg N_2 \geq N_3} \left(\frac{N_{3}}{N_{1}}\right)^{\frac{1}{2}-s}\|F_{1}\|_{L^\infty_t L^2_{x}}\|h_{4}\|_{X^{0,1-b}}\|F_{2}\|_{L^{4+}_{t,x}}N_3^s \|a_{3}\|_{X^{s, b}}\\
&\lesssim \sum_{N_4 \sim N_1 \gg N_2 \geq N_3} \left(\frac{N_{3}}{N_{1}}\right)^{\frac{1}{2}-s}\|F_{1}\|_{L^\infty_t L^2_{x}}\|h_{4}\|_{X^{0,1-b}} \lambda(t_k)^{\frac{1}{2} - } \|F_{2}\|_{L^{\infty-}_tL^{4+}_{x}}N_3^s \|a_{3}\|_{X^{s, b}}
\end{align}
This is again summable.
\subsection*{Case 2: $\|a_{N_1} F_{N_2} F_{N_3} \|$}
Can be estimated precisely as in previous estimate, but we don't need to transfer regularity through bilinear Strichartz.
\subsection*{Case 3: $\|F_{N_1} a_{N_2} F_{N_3} \|$}

When $N_1 \sim N_2$, We estimate using duality and bilinear Strichartz:
\begin{align}
&\sum_{N_1 \sim N_2 \geq N_3} N_{1}^{s}\int F_{1}a_{2}F_{3}h\\
&\lesssim \sum_{N_1 \sim N_2 \geq N_3} N_{1}^{s} \|F_{1}h\|_{L_{t,x}^{2}}\|a_{2}F_{3}\|_{L_{t,x}^{2}}\\
&\lesssim \sum_{N_1 \sim N_2 \geq N_3} N_{1}^{s}N_{2}^{-s}\|F_{1}\|_{L^{4+}_{t,x} }\|a_{2}\|_{X^{s,b}}\|F_3\|_{L^\infty_t L^2_x} \left(\frac{N_{3}}{N_{2}}\right)^{1/2} \\
&\simeq \sum_{N_1 \sim N_2 \geq N_3} N_{1}^{s}N_{2}^{-s}\|F_{1}\|_{L^{4+}_{t,x} }\|a_{2}\|_{X^{s,b}}\|F_3\|_{L^\infty_t L^2_x} \left(\frac{N_{3}}{N_{1}}\right)^{1/2} \\
&\lesssim \sum_{N_1 \sim N_2 \geq N_3} N_{1}^{s}N_{2}^{-s} \lambda(t_k)^{\frac{1}{2} -} \|F_{1}\|_{L^{\infty-}_t L^{4+}_{x}} \|a_{2}\|_{X^{s,b}}\|F_3\|_{L^\infty_t L^2_x} \left(\frac{N_{3}}{N_{1}}\right)^{1/2} \\
\end{align}
and we can sum this expression.

When $N_1 \gg N_2$,  we use duality with $h \in X^{0, 1-b}$, and we decompose $h$ into dyadic blocks $h_{N_{4}}$, and now we have $N_{4}\sim N_{1}$. We estimate:
\begin{align}
&\sum_{N_4 \sim N_1\gg N_2 \geq N_3} N_{1}^{s}\int F_{1}a_{2}F_{3}h_4.
\end{align}
We pair $a_2$ with either $F_1$ or $F_3$ depending on the value of
\[
\min \left( \left( \frac{N_2}{N_1} \right), \left( \frac{N_3}{N_2} \right)\right),
\]
using the other $F$ factor to estimate with $h$ as above.

For example, supposing we perform the bilinear Strichartz with $a_2F_3$, (in the case $N_{3}/N_{2}\leq N_{2}/N_{1}$) we then obtain
\begin{align}
&\sum_{N_4 \sim N_1 \gg N_2 \geq N_3 } N_1^s N_2^{-s} \left( \frac{N_3}{N_2} \right)^{1/2}  \|F_{3}\|_{L^\infty_t L^2_{x}} \|a_{2}\|_{X^{s, b}} \|h_{4}\|_{X^{0,1-b}}\|F_{1}\|_{L^{4+}_{t,x}}\\
&=  \sum_{N_4 \sim N_1 \gg N_2 \geq N_3} N_1^s N_2^{-s} \left( \frac{N_3}{N_1} \right)^{1/4}  \|F_{3}\|_{L^\infty_t L^2_{x}} \|a_{2}\|_{X^{s, b}} \|h_{4}\|_{X^{0,1-b}}\|F_{1}\|_{L^{4+}_{t,x}}\\
&\lesssim  \sum_{N_4 \sim N_1 \gg N_2 \geq N_3} N_1^s N_2^{-s} \left( \frac{N_3}{N_1} \right)^{1/4}  \|F_{3}\|_{L^\infty_t L^2_{x}} \|a_{2}\|_{X^{s, b}} \|h_{4}\|_{X^{0,1-b}}\lambda(t_k)^{\frac{1}{2} -} \|F_{1}\|_{L^{\infty-}_t L^{4+}_{x}}
\end{align}
where we have used that
\[
\min(a,b) \leq \sqrt{ab}.
\]
Once again this is summable for $s < 1/4$ using Cauchy-Schwarz in $N_1 \sim N_4$.

\subsection*{Case 4: $\|F_{N_1} a_{N_2} a_{N_3} \|$}
Once again, we estimate by duality. If $N_1 \sim N_2$, we have
\[
\sum_{N_1\gg N_2 \geq N_3} N_{1}^{s}\int F_{1}a_{2}a_{3}h
\]
and we estimate using bilinear Strichartz with $a_2 a_3$:
\begin{align}
&\sum_{N_1\gg N_2 \geq N_3} N_{1}^{s} N_2^{-s} N_3^{-s} \left( \frac{N_3}{N_2}\right)^{1/2} \|F_1\|_{L^{4+}_{t,x}} \|a_{2}\|_{X^{s, b}} \|a_{3}\|_{X^{s, b}}  \|h\|_{X^{0,1-b}}\\
& \lesssim \sum_{N_1\gg N_2 \geq N_3} N_{1}^{s} N_2^{-s} N_3^{-s} \left( \frac{N_3}{N_2}\right)^{1/2} \lambda(t_k)^{\frac{1}{2} -} \|F_{1}\|_{L^{\infty-}_t L^{4+}_{x}} \|a_{2}\|_{X^{s, b}} \|a_{3}\|_{X^{s, b}}  \|h\|_{X^{0,1-b}}
\end{align}
which is summable using Cauchy-Schwartz in $N_2 \sim N_1$.

When $N_2 \ll N_1$, we dyadically decompose $h$ into $h_{N_4}$ and note we must have $N_1 \sim N_4$. We use bilinear Strichartz between $F_1$ and $a_3$, and we put $a_2 \in L^{4+}_{t,x}$, and we obtain
\[
\sum_{N_4 \sim N_1 \gg N_2 \geq N_3 } N_1^s  \left( \frac{N_3}{N_1} \right)^{1/2}  N_3^{-s} \|a_{3}\|_{X^{s, b}} \|a_{2}\|_{X^{\epsilon, b}} \|h_{4}\|_{X^{0,1-b}}\|F_{1}\|_{L^\infty_t L^2_x},
\]
which is again summable.  Note that we do lose an extra $\lambda^{-\epsilon}$ in the term  $\|a_{2}\|_{X^{\epsilon, b}}$.

\subsection*{Case 5: $\|a_{N_1} F_{N_2} a_{N_3} \|$}
We estimate as in the previous case, but do not need to transfer regularity from the function at the lowest frequency to the highest.
\subsection*{Case 6: $\|a_{N_1} a_{N_2} F_{N_3} \|$}
We estimate as in the previous case, but do not need to transfer regularity from the function at the lowest frequency to the highest.

\subsection*{Case 7: $\|a_{N_1} a_{N_2} a_{N_3} \|$}
As in standard deterministic local theory.

\subsection{Estimate of \eqref{eq: lwpinae}}
The estimate \eqref{eq: lwpinae} essentially follows directly from \eqref{eq: lwpae}, but we sketch the argument here. We will again write down the Duhamel formula of \eqref{eq: diffa}, and apply the $I$-operator $I_{N(\tau_{k}^{j})}$ on both sides, and estimate
\begin{equation}
\|I_{N}|F+a|^{2}(F+a)\|_{X^{s.b-1}}.
\end{equation} 
We distinguish four different scenarios:
\begin{enumerate}
\item[(i)] Three random pieces $|F|^{2}F$
\item[(ii)] Two random pieces terms, for example $F\overline{F}a$
\item[(iii)] Terms with at least two copies of $a$, and the highest frequency is on $a$, for example, the term $a_{1}\overline{F}_{2}a_{3}$
\item[(iv)] Terms where $F$ is at the highest frequency $F_{1}\overline{a}_{2}a_{3}$
\end{enumerate}
For situations (1) and (2), observe that $I_{N}$ will send $X^{s,b}$ into $X^{1,b}$ by losing 
\[
N^{1-s}(t_{k})\sim \lambda(t_{k})^{1-s}\lambda(t_{k})^{(1-s)\delta}.
\]
Using this estimate directly will miss the desired result by $\lambda(t_{k})^{(1-s)\delta}$, and we now detail how to recover this loss. 

In case (i), we see from the previous arguments for Terms 1, 2 and 3 that one beats the desired estimates by $\lambda(t_{k})^{-s}$, hence choosing $0 < \delta \ll 1$ small suffices. 

In case (ii), estimates of form \eqref{eq: verylocal} are applied and one gains a positive power of $\lambda(t)$, for example $\lambda(t_{k})^{1/100}$. Such gains are already enough to compensate  $\lambda(t_{k})^{(1-s)\delta}$ loss since $0 < \delta\ll 1$.

In case (iii), the estimate follows from standard deterministic arguments, and since the highest frequency is on $a$, thus $I_{N}(a_{1}\overline{b_{2}}b_{3})$ , (where $b=a$ or $F$) can be estimated (effectively) as $(I_{N}a_{1})\overline{b_{2}} b_{3}$, and standard persistence of regularity argument can close the estimates.

In case (iv), we are only concerned with the situation $N_{2}\ll N_{1}$, and further, one only needs to consider $N_{1}\geq N=N(\tau_{k}^{j})$. One can distinguish two subcases:
\begin{itemize}
\item  $N_{2}\geq N_{1}\lambda(t_{k})^{\tilde{\epsilon}}$, (it will be clear soon how should we choose this $\tilde{\epsilon}$)
\item $N_{2}\leq N_{1}\lambda(t_{k})^{\tilde{\epsilon}}$
\end{itemize}
In the first subcase, we again use persistence of regularity and transfer $\langle D\rangle I_{N}$ to $a_{2}$ by losing $(N_{1}/N_{2})^{s}$, and one will be able to close (recalling that an error of $\lambda(t_{k})^{-\epsilon_{1}}$ is allowed) if
\begin{equation}\label{eq: bbbb}
\tilde{\epsilon}s\lesssim  \frac{1}{10}\epsilon_{1}
\end{equation}
Note that the existence $\tilde{\epsilon}$ satisfying  \eqref{eq: aaaa} and \eqref{eq: bbbb} requires $\epsilon _{1}\geq \delta s^{2}$, which is acceptable.
In the second subcase,  one follows the same computations as with the term $F_{N_1} a_{N_2} a_{N_3}$ in Case 4, and the bilinear Strichartz estimates gives us an extra $(N_{2}/N_{1})^{1/4}\lesssim \lambda(t_{k})^{-\tilde{\epsilon}}$. We will again use the fact $I_{N}$ will send $X^{s,b}$ into $X^{1,b}$ by losing 
\[
N^{1-s}(t_{k})\sim \lambda(t_{k})^{1-s}\lambda(t_{k})^{(1-s)\delta},
\]
and we are able to close the estimates provided
\begin{equation}\label{eq: aaaa}
\tilde{\epsilon}\geq 10\delta s.
\end{equation}
This concludes the proof.
\end{proof}

\section{Energy Estimates}\label{sec: sig}
In this section, we combine the improved probabilistic local wellposedness in Section \ref{sec:pi} with the log-log bootstrap scheme, in particular \eqref{eq: balwpinteval}, to prove the analogue of \cite[Proposition 3.1]{colliander2009rough}. We still follow an I-method scheme, but our implementation has two main differences compared to \cite{colliander2009rough}.
\begin{itemize}
\item Our LWP theory is different from the standard $H^{s}(\RRR^2)$ lwp in \cite{colliander2009rough}.
\item The function $a$ will play the role of full solution $u$ in \cite{colliander2009rough}, and in particular $a$ does not solve the standard NLS, but rather a forced equation with random forcing terms, for which we need to incorporate extra random data type techniques into the I-method computation.
\end{itemize}
We note that we also take this opportunity to simplify certain aspects of the I-method arguments from \cite{colliander2009rough} in the current setting. Due to the fact that we ultimately combine the energy estimates with the log-log bootstrap, it seems unnecessary to exploit the full cancellation of the I-operator.

\medskip
Recall that we use the ansatz \eqref{eq: ans}. Let $J_{N(t)}$ denote the Fourier multiplier such that 
\begin{equation}
	J_{N(t)}+I_{N(t)}=Id.
\end{equation}
Following \cite{colliander2009rough}, let 
\begin{equation}
	\Xi(t)=\frac{\lambda^{2}}{2}\int |\nabla J_{N(t)}a(0)|^{2}dx,
\end{equation}

In the rest of the article, we will take $p = \infty -$, and we always assume as small probability set has already be dropped so that \eqref{ell_infty} , \eqref{Lp} and \eqref{Lp2} hold, and for every LWP inteval $[\tau_{k}^{j}, \tau_{k}^{j+1}]$, Lemma \ref{lem: lwplocalpro} holds. Since we discussed these considerations thoroughly in the previous section, we do not revisit them again. We will establish the following result.
\begin{prop}\label{pro: almost}
	Restricting the the subset so that \eqref{ell_infty}, \eqref{Lp}, \eqref{Lp2} and  Lemma \ref{lem: lwplocalpro} hold, we have the following: there exists some $\alpha_{1}>0$, such that for all $t\in [0,T]$, one has 
	\begin{equation}\label{eq: em}
		|E(I_{N(t)}a)+\frac{1}{\lambda^{2}(t)}\Xi(t)|\lesssim \biggl(\frac{1}{\lambda(t)}\biggr)^{2-\alpha_{1}},
	\end{equation}
and 
\begin{equation}\label{eq: pm}
	|P(I_{N(t)}a(t))|\lesssim \biggl(\frac{1}{\lambda(t)}\biggr)^{1-\alpha_{1}}.
\end{equation}
\end{prop}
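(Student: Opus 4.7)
The plan is to follow the I-method scheme of \cite{colliander2009rough}, working local-wellposedness interval by local-wellposedness interval. Within each interval $I = [\tau_k^j, \tau_k^{j+1}]$, I will control the increment of the modified energy $E(I_{N(t)}a(t)) + \lambda(t)^{-2}\Xi(t)$ using the forced equation \eqref{eq: eqfora} for $a$, and then sum the per-interval increments first over $j\leq J_k$ (at most $k\lambda(t_k)^{-\eet}$ intervals, by \eqref{eq: balwpinteval} and \eqref{eq: sizeoflwp}) and then dyadically over $k$ from $k_0$ down to the value corresponding to $t$. The base case $E(I_{N_0}a_0)\lesssim \lambda_0^{-1/2}$ follows from \eqref{eq: energyini}, so it suffices to show that the increment on each LWP interval is bounded by $\lambda(t_k)^{-2+\alpha_1'}$ for some $\alpha_1'>\alpha_1$ small enough to survive the summation in $j$ and the telescoping in $k$.

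To compute $\frac{d}{dt}E(I_N a)$, I will differentiate and substitute $ia_t = -\Delta a - |a|^2 a - \mathcal{N}(a,F)$, where $\mathcal{N}(a,F) := |a+F|^2(a+F) - |a|^2 a$ collects all terms containing at least one factor of $F$. A short computation reveals two types of contributions: a deterministic ``I-method commutator'' of the form $\Im\int \bigl(I_N(|a|^2 a) - |I_N a|^2 I_N a\bigr)\cdot \overline{|I_N a|^2 I_N a}$ and a ``random forcing'' contribution coming from pairing $I_N\mathcal{N}(a,F)$ against $\Delta I_N a + |I_N a|^2 I_N a$. The correction term $\frac{d}{dt}(\lambda^{-2}\Xi)$ absorbs the contribution from the time dependence of $N(t) = \lambda(t)^{-1-\delta}$ acting on $J_{N(t)}a$, exactly as in \cite[Proposition 3.1]{colliander2009rough}, so I will not re-derive this cancellation.

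For the deterministic commutator I will invoke the standard I-method commutator estimate combined with the $X^{s,b}$ control \eqref{eq: lwpae}--\eqref{eq: lwpinae} on $I_{N(\tau_k^j)}a$. The smoothing of the $I$-operator gains $N(t_k)^{-(1-s)+}$, which, multiplied by the interval length $\sim \lambda(t_k)^{2-\eet}$, produces an increment of size $\lambda(t_k)^{-2 + 2(1-s)\delta - \eet+}$; this is acceptable provided $\delta$ and $\eet$ are chosen small enough, per \eqref{eq: puretr}. For the random forcing, I will expand $\mathcal{N}(a,F)$ into multilinear pieces indexed by the number and position of $F$ factors, and estimate each by the corresponding case in the proof of Lemma \ref{lem: lwplocalpro}. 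The key observation is that every such piece either contains a purely random factor $|F|^2 F$ whose $X^{s,b-1}$ norm is already $O(1)$ with room to spare (Terms 1--3 of that proof), or contains at least one copy of $F$ which can be placed in $L^{4+}_{t,x}$ on the short interval $I$, where the short-time H\"older bound \eqref{eq: verylocal} provides an extra $\lambda(t_k)^{1/2-}$ factor.

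The main obstacle will be the terms in which $F$ carries the highest frequency while $I_N$ insists on placing a full derivative on $F$, even though $F$ lies only at $L^2_x$ regularity. Here I will argue exactly as in Cases 3 and 4 of Lemma \ref{lem: lwplocalpro}: distinguishing according to whether the second-highest frequency satisfies $N_2 \geq N_1 \lambda(t_k)^{\tilde\epsilon}$ or $N_2 \leq N_1 \lambda(t_k)^{\tilde\epsilon}$, I either transfer regularity to $a$ by persistence (paying $(N_1/N_2)^s$) or use bilinear Strichartz to gain $(N_2/N_1)^{1/4}$, and in either case the short-time smallness \eqref{eq: verylocal} of a further copy of $F$ closes the estimate with plenty of room. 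Summing the per-interval bound $\lambda(t_k)^{-2+\alpha_1'}$ over the $\lesssim k\lambda(t_k)^{-\eet}$ intervals in $[t_k, t_{k+1}]$ and then telescoping dyadically in $k$, using $\lambda(t_{k+1}) \sim \lambda(t_k)/2$, produces a geometric sum dominated by its last term $\lambda(t)^{-2+\alpha_1}$ for any $\alpha_1 < \alpha_1'$, which gives \eqref{eq: em}. The momentum bound \eqref{eq: pm} is proved in the same fashion but more easily, since $P(I_N a) = \Im\int \overline{I_N a}\,\nabla I_N a$ involves only one derivative: the same case analysis with $\nabla$ in place of $\Delta$, combined with the initial bound \eqref{eq: momeini}, yields \eqref{eq: pm}.
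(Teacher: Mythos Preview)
Your overall strategy---split into an initial contribution plus a growth estimate handled LWP-interval by LWP-interval, then sum geometrically---matches the paper. But two points need correction.

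First, your description of $\Xi$ is wrong. In the paper (and in \cite{colliander2009rough}) one \emph{fixes} $N=N(T)$ throughout the argument; there is no tracking of $\frac{d}{dt}(\lambda^{-2}\Xi)$ and no cancellation of ``the time dependence of $N(t)$ acting on $J_{N(t)}a$''. The role of $\Xi$ is purely in the initial estimate: since $\lambda^{-2}\Xi(T)=\tfrac12\int|\nabla J_{N}a(0)|^{2}$, adding it to $E(I_{N}a(0))$ rebuilds the kinetic energy of $a(0)$ up to lower order, and then the subcriticality of the potential term gives $|E(I_{N}a(0))+\lambda^{-2}\Xi(T)|\lesssim \lambda(T)^{-2+\alpha_1}$. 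The growth part is $|E(I_{N}a(T))-E(I_{N}a(0))|$ with $N$ held fixed, not a derivative of a quantity with moving $N(t)$.

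Second, your plan to ``estimate each multilinear piece by the corresponding case in the proof of Lemma~\ref{lem: lwplocalpro}'' is too quick. That lemma gives $X^{s,b}$ bounds on the cubic nonlinearity; the energy increment requires instead bounds on $\int_I\int \overline{\Delta I_N a_1}\,(\cdots)$, which carries two derivatives and has a different balance of frequencies. The paper accordingly writes $\partial_t E(I_N a)$ as $A_I+A_{II}+B_I+B_{II}$ (inserting $|I_N(a+F)|^2 I_N(a+F)$ as an intermediate) and performs a separate Littlewood--Paley case analysis for each, distinguishing $N_1\sim N_2$ versus $N_1\ll N_2$ and using bilinear Strichartz plus the short-time gain \eqref{eq: verylocal}. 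Your ``deterministic commutator / random forcing'' split is actually a slightly cleaner way to organize the same terms---in particular it gives the three-$F$ contribution directly as $\Re\int\overline{\Delta I_N a}\,I_N(|F|^2F)$, avoiding the $|I_N F|^2 I_N F$ cancellation the paper has to observe between $A_I$ and $A_{II}$---but the per-interval estimates still have to be redone at the $\Delta$-level rather than quoted from Lemma~\ref{lem: lwplocalpro}, and your stated increment $\lambda(t_k)^{-2+2(1-s)\delta-\eet+}$ does not match the arithmetic that actually emerges (the paper gets a per-interval bound of order $N^{2-2s}\lambda(t_k)^{1-2s-}$, summing to $\lambda(T)^{-(2-2s)(1+\delta)}$).
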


\begin{rem}
	The exact value of $\alpha_{1}$ is somewhat different in our setting compared to \cite{colliander2009rough}. Indeed, recall that in \cite{colliander2009rough}, they establish a result for every $s>0$, and thus they have a choice of $\alpha_1$ for each such $s$. In contrast, we \emph{choose} some $0<s\ll 1$, and only need to find one such $\alpha_{1}$ for this particular $s$.
\end{rem}

\begin{proof}[Proof of Proposition \ref{pro: almost}]

We will focus on estimate \eqref{eq: em} and, as we will remark, \eqref{eq: pm} follows in a similar (if not simpler) manner. We will only estimate the case $t=T$ in Proposition~\ref{pro: almost}, and we will denote $N=N(T)$. The proof of \eqref{eq: em} has two parts:
\begin{itemize}
\item an initial estimate
\begin{equation}\label{eq: initialok}
|\Xi(T)+E(I_{N}(a(0)))|\lesssim \biggl(\frac{1}{\lambda(T)}\biggr)^{2-\alpha_{1}}, 
\end{equation}
for some $\alpha_{1}>0$, and
\item a growth estimate:
\begin{equation}\label{eq: growthok}
|E(I_{N}a(T))-E(I_{N}a(0))||\lesssim \biggl(\frac{1}{\lambda(T)}\biggr)^{2-\alpha_{1}}, 
\end{equation}
for some $\alpha_{1}>0$.
\end{itemize}

The initial estimate \eqref{eq: initialok}, this follows from the bootstrap assumptions \eqref{eq: bah1control} and the fact that the potential energy is subcritical compared to the kinetic energy, and one can argue exactly as the proof of (3.24) in \cite{colliander2009rough}. Thus, the rest of this section is mainly devoted to the proof of \eqref{eq: growthok}. Recalling \eqref{eq: eqfora}, we compute 
\begin{equation}
\begin{aligned}
\partial_t E(I_N a) 
& = \Re \int \overline{\bigl(I_N \Delta a +|I_{N}a|^{2}I_{N}a \bigr)} \left[ I_N (| a + F |^2 (a + F))  - |I_N a|^2 I_N a \right] \\
& := A_I + A_{II} + B_I + B_{II}
\end{aligned}
\end{equation}
where
\begin{align*}
A_I &:= \Re \int \overline{I_N \Delta a} \left[ I_N (| a + F |^2 (a + F)) - |I_N( a + F) |^2 I_N (a + F)  \right] \\
A_{II} &:= \Re \int \overline{I_N \Delta a} \left[  |I_N( a + F) |^2 I_N (a + F))  - |I_N a|^2 I_N a \right] \\
B_{I} &:= \Re \int \overline{|I_N a|^{2}I_{N}a}\left[ I_N (| a + F |^2 (a + F)) -  |I_N( a + F) |^2 I_N (a + F)  \right]  \\
B_{II} &:= \Re \int \overline{|I_N a|^{2}I_{N}a} \left[  |I_N( a + F) |^2 I_N (a + F)) - |I_N a|^2 I_N a \right].
\end{align*}

We will estimate each term separately, up to an observation on cancellation between terms that will be useful in the sequel. Indeed, we note the term $-|I_{N}F|^{2}I_{N}F$ in $A_{I}$ will cancel the same term in $A_{II}$, and the same cancellation also holds between $B_{I}$ and $B_{II}$. It is not immediately clear whether such cancellation is crucial, however it simplifies the analysis considerable because subtle probabilistic arguments, (as illustrated in previous section) have to be applied to analyze $|F|^{2}F$, resulting in extra subsets of small probability needing to be dropped. In order to redo the same estimates for all $N$, the analysis is not only more technical, but one needs to be careful about summability of the probabilities of these subsets, and the aforementioned cancellation frees us from this issue.

\medskip
We will now address the estimates of the terms $A_I, A_{II}, B_{I}, B_{II}$.

\subsection*{Estimates of $A_I$}
To estimate
\begin{align}\label{eq:AI}
A_I := \Re \int \overline{I_N \Delta a}\left[ I_N (| a + F |^2 (a + F)) - |I_N( a + F) |^2 I_N (a + F)    \right] ,
\end{align}
we will estimate the integral of $A_{I}$ within each LWP interval $[\tau_{k}^{j}, \tau_{k}^{j+1}]$ for $k_{0}\leq k\leq k_{1}, 1\leq j \leq J_{k}$ and then sum the resulting estimates. Due to the fact that our method is of subcritical nature, we need to beat the trivial estimate by at least $\lambda(t_{k})^{-\delta}$ within the interval $[\tau_{k}^{j}, \tau_{k}^{j+1}]$ With this in mind, we recall the parameters \eqref{eq: puretr}, and the fact that any loss of $\lambda(t_{k})^{-C\epsilon_{2}}$ or $\lambda(t_{k})^{-C\delta s}$ will be acceptable and can be neglected, we don't repeat this point later in the analysis.

We will see from the proof that if one fixes $k$, the estimate can be performed identically for different $j$. This follows since there are at most $k\lambda(t_{k})^{-\epsilon_{2}}\lesssim \lambda(t_{k})^{-2\epsilon_{2}}$ many LWP intervals. Hence, we can estimate a single LWP interval within $[t_{k}, t_{k+1}]$ and absorb the loss stemming from counting the number of intervals.  

Finally, one will observe that the estimate of $|A_{I}|$ is monotone in $k$, it is indeed enough to compute its integral in the last LWP interval $ [\tau_{k_{1}}^{J_{k-1}}, \tau_{k_{1}}^{J_{k}}]$ since $k_{1}\sim \ln \frac{1}{\lambda(t_{k})}$, any a loss of $k_{1}$ is also allowed by the previous analysis. At the heuristic level, one may compare it to summing up a geometric series, where the value of the sum is determined by the last term (up to an allowable error).

\medskip
We apply a Littewood-Paley decomposition to the quadrilinear term \eqref{eq:AI}, with frequencies $\xi_i \sim N_i$. We assume that $N_2 \geq  N_3 \geq N_4 $, and we write 
\[
a_i = a_{N_i} := P_{N_i} a,
\]
and similarly for $F_i$.   We will estimate two types of terms explicitly which are the most difficult cases. Other terms can be estimated via essentially the same (if not easier) analysis. As mentioned above, we will also exploit cancellation which enables us to handle some of the terms with three random pieces. We finally point out that when there are no random terms, one can just follow \cite{colliander2009rough}.

\subsection*{One random piece} The most difficult case is when the random term is at the highest allowable frequency, $N_2$. We consider one local wellposedness interval $I = [\tau_k^j, \tau_k^{j+1} ]$ and recall $|I| \lesssim  \lambda^2(t_k)$, and estimate
\begin{equation}\label{eq: target}
\sum_{N_1, N_2, N_3, N_4} \int_{\tau_{k}^{j}}^{\tau_{k}^{j+1}}\int \overline{I_N \Delta a_1 }\left[ I_N (F_2 \overline{a}_3 a_4) - I_N F_2 \overline{I_N a_3} I_N a_4    \right] dx dt.
\end{equation}
In order for the integral to be non-zero, we necessarily have $N_2 \gtrsim N$. We handle two cases:
\begin{itemize}
\item $N_1 \sim N_2$
\item $N_1 \ll N_2$.
\end{itemize}
Without loss of generality, assume that $N_i \geq 1$ for all $i$. When $N_{1}\sim N_{2}$, we will estimate the integral by estimating the term\footnote{Strictly speaking, \eqref{eq: target} is not bounded by \eqref{eq: like}, but by \eqref{eq: work}. What we mean here is, one can think about the estimate of \eqref{eq: target} as the estimate of \eqref{eq: like}, thus naturally lead to the estimate of \eqref{eq: work}.}
\begin{equation}\label{eq: like}
\left|\sum_{N_{1} \sim N_2 \geq N_{3} \geq N_{4} }  \iint N_{1}^{2}\bigl(\frac{N^{1-s}}{N_{1}^{1-s}} a_{1}\bigr)\bigl(\frac{N^{1-s}}{N_{2}^{1-s}}F_{2}\bigr) \overline{a_{3}}a_{4} \right |.
\end{equation}
We note that the complex conjugate will not be material and the quotients come from the definition of the $I_N$ operator. We use H\"older's inequality to estimate
\begin{equation}\label{eq: work}
N^{2-2s}\sum_{N_2 \sim N_{1}\geq N_{3}\geq N_{4}}N_{1}^{2s}\|a_1a_{3}\|_{L_{x,t}^{2}}\|F_{2}a_{4}\|_{L_{t,x}^{2}}.
\end{equation}

First we sum over $a_{4}$ via triangle inequality, noting that $\|a_{4}\|_{L_{t}^{\infty}L_{x}^{2^+}}\lesssim N_{4}^{-s+} \lambda(t_k)^{-2s}$ . As previously mentioned, we neglect any loss of form $\lambda(t_k)^{-\epsilon_{1}}$. Noting that in the current case $N_{1}\sim N_{2}$, and recalling that $F_{1} \in L_{t,x}^{p}=L_{t,x}^{\infty-}$ by \eqref{Lp2},  we have 
\begin{equation}
\begin{aligned}
&\lesssim N^{2-2s}\sum_{N_{1}\geq N_{3}\geq N_{4}} N_{1}^{2s}\|a_1a_{3}\|_{L_{t,x}^{2}} \|F_{1}\|_{L_{t,x}^{\infty-}}|I|^{1/2-}\\
&\sim N^{2-2s}\lambda(t_{k})^{1- 2s-}\sum_{N_{1}\geq N_{3}} N_{1}^{2s}\|a_1a_{3}\|_{L_{t,x}^{2}}\|F_{1}\|_{L_{t,x}^{p}}.
\end{aligned}
\end{equation}
Finally, we use the Bilinear Stirchartz esimates, to obtain
\begin{equation}\label{eq: a11inter}
\lesssim N^{2-2s}\lambda(t_{k})^{1-}\sum_{N_{1} \geq N_{3}}N_{1}^{s}\|a_1\|_{X^{0,b}}\|F_{1}\|_{L_{t,x}^{p}} N_{3}^{s}\|a_{3}\|_{X^{0,b}} \left( \frac{N_{3}}{N_{1}}\right)^{1/2-s}.
\end{equation}
Since both $N_{1}^{s}\|a_{1}\|_{X_{0,b}}$ (by definition) and $\|F_{1}\|_{L_{t,x}^{p}}$ (by \eqref{Lp2}) are $\ell_{2}$ summable, we may apply Cauchy-Schwarz inequality between these terms, and triangle inequality to sum $N_{3}\leq N_{1}$. We ultimately can estimate the contribution of this term to \eqref{eq: a11inter} by
\[
N^{2-2s}\lambda(t_{k})^{1- 2s-}.
\] 
Summing over all LWP intervals and applying \eqref{eq: balwpinteval}, \eqref{eq: sizeoflwp}, (recall also \eqref{eq: puretr}), one has
\begin{equation}
\begin{aligned}
|\int_0^{T}A_{I}|\lesssim \sum_{k_{0}}^{k_{1}}kN^{2-2s}\lambda(t_{k})^{1-2s-}\lambda(t)^{-\epsilon_{2}}\lesssim N^{2-2s}\lesssim \lambda(T)^{2(1+\delta)(1-s)}\lesssim \lambda(T)^{2-2s}
\end{aligned}
\end{equation}
which is the desired estimate.

\medskip
We also record the following simple observation from the computation above as a remark to reference later in the proof. We will not repeat the same argument later.
\begin{rem}\label{rem: wdww}
Provided we can estimate 
\[
\int_{\tau_{k}^{j}}^{\tau_{k}^{j+1}}A_{I} \lesssim N^{1-c_{0}s},
\]
for some $c_0 > 0$, for example $c_{0}=\frac{1}{100}$, we are able to sum the estimates up along all the LWP intervals.
\end{rem}

Next we turn to the case when $N_{1}\ll N_{2}$, we first observe the necessarily $N_{3}\sim N_{2}$. For notational convenience, we will use $S$ to denote $X^{0,b}$ in the rest of the section. We discuss two subcases:
\begin{itemize}
\item $N_{1}\geq N$,
\item $N_{1}\leq  N$.
\end{itemize}
First if $N_{1}\geq N$, we may reduce to estimating
\begin{equation}\label{eq: n1gn}
\begin{aligned}
&\left| \sum_{N_3 \sim N_{2}\geq N_{1}, N_{4}}\iint N_{1}^{2} \bigl(\frac{N^{1-s}}{N_{1}^{1-s}} a_1 \bigr) \bigl( \frac{N^{1-s}}{N_{1}^{1-s}} F_{2}\bigr) a_{3}a_{4} \right| \\
&\lesssim N^{2-2s} \left| \sum_{N_3 \sim  N_{2}\geq N_{1},N_{4}}\iint N_{1}^{2s}a_1F_{2}a_{3}a_{4} \right| \\
&\lesssim N^{2-2s}\sum_{N_3 \sim  N_{2}\geq N_{1},N_{4}}\lambda(t_{k})^{1-\varepsilon} N_{1}^{2s} \|a_1\|_{S} \|a_{3}\|_{S}N_2^\varepsilon \|F_{2}\|_{L_{t,x}^{\infty-} }N_{4}^{-s}\|a_{4}\|_{S}\\
&\lesssim N^{2-2s}\lambda(t_{k})^{1-\varepsilon} \sum_{N_3 \sim N_{2}\geq N_{1}}N_{1}^{2s} \|a_1\|_{S}N_2^\varepsilon \|a_{3}\|_{S}.
\end{aligned}
\end{equation}
Note that in the first line of  \eqref{eq: n1gn}, we either estimate $I_{N}(F_{2}a_{3}a_{4})$ whose out put frequency lies on $|\xi|\sim N_{1}$, or we estiamte $I_{N}F_{2}I_{N}a_{3}I_{N}a_{4}$, which has an I-operator smoothing at frequency at $N_{2}\geq N_{1}$. Ultimately we again obtain
\[
N^{2-2s}\lambda(t_{k})^{1-\varepsilon - 2s}
\]
which is enough from Remark \ref{rem: wdww}.

Finally, when $N_{1}\leq  N$, then we may reduce to estimate
 \begin{equation}
\left| \sum_{N_3 \sim  N_{2}\geq N_{1},N_{4},\, N \geq N_1} \iint N_{1}^{2} a_1 F_{2}N^{1-s}N_{2}^{-1+s}a_{3}a_{4} \right|,
 \end{equation}
 and in this case, one ends up with $\lambda(t_{k})^{1-2s - }N^{2-s}$, which is sufficient by Remark \ref{rem: wdww}.

 \subsection*{Two random pieces}
 We estimate the term with $a_{1}, F_{2}, F_{3}, a_{4}$. Recall, again, that unless $N_{2}\gtrsim N$ the expression we are estimating is zero. Once again, we handle two cases:
\begin{itemize}
\item $N_1 \sim N_2$
\item $N_1 \ll N_2$.
\end{itemize}

When $N_1 \sim N_2$, we estimate the integral via
 \begin{equation}
 \begin{aligned}
 &\sum_{N_2 \sim N_{1}\geq N_{3}\geq N_{4}}\int N_{1}^{2}\bigl(\frac{N^{1-s}}{N_{1}^{1-s}}a_1\bigr) \bigl(\frac{N^{1-s}}{N_{2}^{1-s}}F_{2}\bigr)F_{3}a_{4}\\
=&N^{2-2s}\sum_{N_2 \sim N_{1}\geq N_{3}\geq N_{4}}\int N_{1}^{2s}a_1F_{2}F_{3}a_{4}\\
\lesssim& N^{2-2s}\sum_{N_2 \sim N_{1}\geq N_{3}\geq N_{4}}N_{1}^{2s}\|a_1a_{4}\|_{L_{t,x}^{2}}\|F_{2}F_{3}\|_{L_{x,t}^{2}}.
\end{aligned}
 \end{equation}
 We apply bilinear Strichartz and use the $L_{t.x}^{\infty-}$ control from $F_{N_{1}}$ (using randomness), one has the above controlled by
 \begin{equation}
 \begin{aligned}
& \lesssim N^{2-2s}\sum_{N_{1}\sim N_2\geq N_{3}\geq N_{4}}N_{1}^{s}\|a_1\|_{S}\|F_{2}\|_{L^{\infty-}_t L^4_x}\|F_{3}\|_{L^{\infty-}_t L^4_x}|I|^{1/2-}N_{4}^{s}(\frac{N_{4}}{N_{1}})^{1/2-s}\|a_{4}\|_{S}\\
&\lesssim N^{2-2s}\lambda(t_{k})^{1-}\sum_{N_{1}\sim N_2\geq N_{3}\geq N_{4}}N_{1}^{s}\|a_1\|_{S}\|F_{2}\|_{L^{\infty-}_t L^4_x}\|F_{3}\|_{L^{\infty-}_t L^4_x}(\frac{N_{4}}{N_{3}})^{1/2-s}N_{4}^{s}\|a_{4}\|_{S}
 \end{aligned}
 \end{equation}
 Applying Cauchy Schwarz in $N_{4}$, one derives
 \begin{equation}
 \begin{aligned}
& \lesssim N^{2-2s}\lambda(t_{k})^{1-}\sum_{N_{1}\sim N_2 \geq N_{3}}N_{1}^{s}\|a_1\|_{S}\|F_{2}\|_{L^{\infty-}_t L^4_x}\|F_{3}\|_{L^{\infty-}_t L^4_x} \|a_3 \|_{S}(\frac{N_{3}}{N_{1}})^{1/2-s}\\
 &\lesssim N^{2-2s}\lambda(t_{k})^{1-} \sum_{N_{1} \sim N_2 \geq N_{3}}N_{1}^{s}\|a_1\|_{S}\|F_{2}\|_{L^{\infty-}_t L^4_x}\|F_{3}\|_{L^{\infty-}_t L^4_x} (\frac{N_{3}}{N_{1}})^{1/2-s}.
 \end{aligned}
 \end{equation}
To conclude, we sum over $N_{3}$, and then apply Cauchy Schwarz in $N_{1} \sim N_2$, which yields
 \[
 N^{2-2s}\lambda(t_{k})^{1-}.
 \]

Next, to estimate the expression when $N_{1}\ll N_{2}$, we again note that one necessarily has $N_{2}\sim N_{3}$. As above, we split into subcases
 \begin{itemize}
 \item $N_{1}\geq N$
 \item $N_{1}\leq N$
 \end{itemize}
 In the first subcase, one estimates
 \begin{equation}
 \begin{aligned}
 &N^{2-2s}\sum _{N_3 \sim N_{2}\geq N_{1},N_{4}}\int N_{1}^{2s} a_1 F_{2}F_{3}a_{4} \\
 \lesssim &N^{2-2s}\sum_{N_3 \sim N_{2}\geq N_{1}, N_{4}}\|F_{2}F_{3}\|_{L_{x,t}^{2}}N_{1}^{s}\|a_1\|_S N_{4}^{s}\|a_{4}\|_S \min \left(  (\frac{N_{1}}{N_{4}})^{s}, (\frac{N_{4}}{N_{1}})^{1/2-s}\right).
  \end{aligned}
 \end{equation}
As in the first case of $A_I$, we handle the double sum in  $N_{1}, N_{4}$, and we may estimate this expression by
 \begin{equation}
 \begin{aligned}
&N^{2-2s}\sum_{N_3 \sim N_{2}}\|F_{2}F_{3 }\|_{L_{t,x}^{2}}\\
& \lesssim N^{2-2s}\sum_{N_{2}}|I|^{1/2-}\|F_{2}\|_{L^{\infty-}_t L^4_x}\|F_{3}\|_{L^{\infty-}_t L^4_x}  \\
& \lesssim N^{2-2s}\lambda(t_{k})^{1-}
 \end{aligned}
 \end{equation}
 Finally, the case $N_{1}\leq N$, proceeds analogously with an extra $N^{s}$ loss, which is allowable.
\subsection*{Three random pieces}
As mentioned in our discussion of the cancellation of three random terms above, we only need to control 
\begin{equation}
\int_{I}\int \overline \Delta I_{N}a I_{N}(|F|^{2}F)
\end{equation}
Here, we recall \eqref{eq: fgood} and bound the above by
\begin{equation}
\|I_{N}a\|_{X^{1,b}[I]}\|I_{N}(|F|^{2}F)\|_{X^{1,1-b}[I]}\lesssim \|I_{N}a\|_{X^{1,b}[I]} N^{1-s}
\end{equation}
Note we use that $I_{N}$ can gain $1-s$ derivative by losing $N^{1-s}$. Now, plugging in \eqref{eq: directmodify}, we bound the above via
\begin{equation}
 N(T)^{1-s}(\frac{N(T)}{N(t_{k})})^{1-s}(\frac{1}{\lambda(t_{k})})^{1+ \epsilon_{1}}
\end{equation}
Summing over all LWP intervals yields 
\[
N(T)^{1-s}\frac{1}{\lambda(t)^{1+C\epsilon_{2}}},
\]
which is acceptable provided $0 < \delta \ll s$, and $0 < \epsilon_2 \ll 1$.

\subsection*{Estimates of $A_{II}$}

We recall the expression for $A_{II}$:
\[
A_{II} := \Re \int \overline{I_N \Delta a} \left[  |I_N( a + F) |^2 I_N (a + F))  - |I_N a|^2 I_N a \right] .
\]
Also recalling again our discussion on the cancellation of the three random terms, we note there will be no need to consider the three random pieces case here. In light of Remark \ref{rem: wdww}, we will work on $I=[\tau_{k}^{j}, \tau_{k}^{j+1}]$, and prove estimate on this interval.
\subsection*{One or two random pieces}
We may combine the estimates for one or two random pieces here. As above, we let $N_2 \geq N_3 \geq N_4$. Once again, we treat the case where the random piece is at the highest allowable frequency. We consider the cases:
\begin{enumerate}
\item $N_{1}\sim N_{2}\geq N_{3}\geq N_{4}$
\begin{itemize}
\item $N_{1}\geq N$
\item $N_{1}\leq N$
\end{itemize}
\item $N_{1}\ll N_{2}$, in this case one must has $N_{2}\sim N_{3}$
\begin{itemize}
\item $N_{1}\geq N$, note that in this case one must have $N_{2}\geq N$
\item $N_{1}\leq N$.
\end{itemize}
\end{enumerate}

Recall that
\begin{equation}
\|\nabla Ia\|_{S}\lesssim (\frac{N}{N(t_{k})})^{1-s}\frac{1}{\lambda(t_{k})}
\end{equation}
and 
\begin{equation}
\|a\|_{X^{s,b}}\sim \|\nabla^{s}a\|_{S}\sim \frac{1}{\lambda(t_{k})^{s}}.
\end{equation}
We start with subcase $N_{1}\sim N_{2}\geq N_{3}\geq N_{4}$, $N_{1}\geq N$. In this case, we estimate
\begin{equation}\label{eq: a2e1}
\begin{aligned}
&\sum_{N_2 \sim N_{1}\geq N_{3}\geq N_{4}}\int N_{1}^{2}Ia_{1}\frac{N^{1-s}}{N_{1}^{1-s}}F_{2}Ia_{3}Ia_{4}\\
&\lesssim N^{1-s}\sum_{N_{1} \sim N_2 \geq N_{3}\geq N_{4}}N_{1}N_{1}^{s}\|Ia_{1}Ia_{4}\|_{L_{x,t}^{2}}\|F_{2}Ia_{3}\|_{L_{t,x}^{2}}.
\end{aligned}
\end{equation}
Using bilinear Strichartz estimates for the $a_1, a_{4}$, term, and the random data control for $F_{2}$, we derive 
\begin{equation}
\begin{aligned}
\lesssim &N^{1-s}\sum_{N_{1} \sim N_2\geq N_{3}\geq N_{4}}N_{1}N_{1}^{s}(\frac{N_{4}}{N_{1}})^{1/2}\|Ia_{1}\|_{S}\|a_{4}\|_{S}\|F_{2}\|_{L_{t,x}^{4}} \|
a_{3}\|_{S}\\
=&N^{1-s}\lambda(t_{k})^{\frac{1}{2} - }\sum_{N_{1} \sim N_2\geq N_{3}\geq N_{4}}N_{1}\|Ia_1\|_{S}\|F_{2}\|_{L^{\infty-}_t L^4_x} \|a_{3}\|_{S}(\frac{N_{4}}{N_{1}})^{1/2-s}N_{4}^{s}\|a_{N_{4}}\|_{S}\\
\lesssim&
 N^{1-s}\lambda(t_{k})^{\frac{1}{2} - }\sum_{N_{1}\geq N_{3}}N_{1}\|Ia_1\|_{S}\|F_{2}\|_{L^{\infty-}_t L^4_x}\|a_{3}\|_{S}\|a_{3}\|_{X^{s,b}}(\frac{N_{3}}{N_{1}})^{1/2-s}\\
 \lesssim &N^{1-s}\lambda(t_{k})^{\frac{1}{2} - }\frac{1}{\lambda(t_{k})^{s}}\sum_{N_{1}\geq N_{3}}N_{1}\|Ia_1\|_{S}\|F_{2}\|_{L^{\infty-}_t L^4_x}\|a_{3}\|_{S}(\frac{N_{3}}{N_{1}})^{1/2-s}\\
 \lesssim &N^{1-s}\lambda(t_{k})^{\frac{1}{2} - }\frac{1}{\lambda(t_{k})^{s}}\|\nabla Ia\|_{S_{0}}\\
 \lesssim &N^{1-s}\lambda(t_{k})^{\frac{1}{2} - } \frac{1}{\lambda(t_{k})^{s}}\frac{N^{1-s}}{N(t_{k})^{1-s}}\frac{1}{\lambda(t_{k})}.
\end{aligned}
\end{equation}
This is desirable. It should be remarked we don't use any regularity of $a_{N_{3}}$, so the above arguments also works  when $a_{N_{3}}$ is replaced by $F_{N_{3}}$.

Now, we go to  the subcase  $N_{1}\sim N_{2}\geq N_{3}\geq N_{4}$, $N_{1}\leq N$, then the $I$ operator is just the identity map. We estimate 
\begin{equation}
\begin{aligned}
&\sum_{N\geq N_{1} \sim N_2 \geq N_{3}\geq N_{4}}\int N_{1}^{2}a_1F_{2}a_{3}a_{4}\\
&\lesssim \lambda(t_{k})^{\frac{1}{2} -} \sum_{N_{1} \sim N_2 \geq N_{3}\geq N_{4}}N_{1}^{2}\|a_1\|_{S}\|F_{2}\|_{L_{t}^{- \infty} L^4_{x}}\|a_{3}\|_{S}\|a_{4}\|_{S}(\frac{N_{4}}{N_{1}})^{1/2}\\
&\lesssim N^{2-s}\lambda(t_{k}) \frac{1}{\lambda(t_{k})^{s}}.
\end{aligned}
\end{equation}
We note that while we need $s$ derivative of $a_1$, no regularity of $a_{3}$ is used and thus this argument applies equally to the case with two random pieces.

Now, let us go to subcase $N_{1}\ll N_{2}$, then one must have $N_{2}\sim N_{3}$.  We first consider the subcase $N_{1}\geq N$, then necessarily one has $N_{2}\geq N$. We observe that
\begin{equation}
\|I_{N}F_{2}\|_{L_{t,x}^{\infty}}\lesssim (N_{2})^{s-1}N^{1-s}\|F_{2}\|_{L_{t,x}^{\infty}}.
\end{equation}
One may estimates 
\begin{equation}\label{eq: t1}
\begin{aligned}
&\sum_{N_{2} \sim N_3 \geq N\geq N_{1}, N_{4}}N^{2-2s}\frac{N_{1}^{2-2s}}{N_{2}^{2-2s}}\int N_{1}^{2s}a_1F_{2}a_{3}a_{4}\\
&\lesssim \lambda(t_{k})\|F_{2}\|_{L_{t,x}^{\infty}}\|a_{3}\|_{S}(\frac{N_{1}}{N_{2}})^{2-2s}N_{1}^{2s}\|a_1a_{4}\|_{L_{t,x}^{2}},
\end{aligned}
\end{equation}
and we further estimate
\begin{equation}
N^{2s}_{1}\|a_1a_{4}\|_{L_{t,x}^{2}}\lesssim N_{1}^{s}N_{4}^{s}\|a_1\|_{S}\|a_{4}\|_{S}\min(\frac{N_{4}}{N_{1}})^{\frac{1}{2}-s}, (\frac{N_{1}}{N_{4}})^{s}.
\end{equation}
Plugging this back to \eqref{eq: t1}, one finish the estimate as $\lambda(t_{k})^{1-2s}N^{2-2s}$. 

Finally, we are left with the case $N_{1}\ll N_{2}$, $N_{3}\sim N_{2}$, $N_{1}\leq N$, one simply estimates this expression as 
\begin{equation}
\sum_{N_1, N_{2}\sim  N_{3}, N_{4}}N_{1}^{2}a_1F_{2}a_{3}a_{4}\lesssim N^{2-s}\lambda(t_{k})^{1-s},
\end{equation}
which is sufficient.

The estimate with two random terms follows as in the estimates for $A_I$.

\subsection*{Estimates for $B_I$ and $B_{II}$}

These estimates proceed similarly to the previous ones, but are somewhat simpler since we do not lose derivatives, and indeed it is easy to see when there is at least three random pieces appear in the estimate, the proof becomes more or less trivial. This is in sharp contrast compared to the case of $A_{I}$ and $A_{II}$.
 Also, purely deterministic case follows from the estimates in  \cite{colliander2009rough}. 

We recall
\begin{align*}
B_{I} &:= \Re \int \overline{I_N (| a |^2 a)}\left[ I_N (| a + F |^2 (a + F)) -  |I_N( a + F) |^2 I_N (a + F)  \right]  \\
B_{II} &:= \Re \int \overline{I_N (| a  |^2 a)} \left[  |I_N( a + F) |^2 I_N (a + F)) - |I_N a|^2 I_N a \right].
\end{align*}

\subsection*{One random piece}
We will record the estimates involving one random piece. Once again we let $N_1 \geq N_2 \geq N_3$ and $N_4 \geq N_5 \geq N_6$. We will see that $B_I$ and $B_{II}$ follow in a similar manner to $A_I$ and $A_{II}$ and hence we will sketch the estimates for $B_I$, and leave $B_{II}$ to an interested ready. We ignore complex conjugates as they will not feature in our argument.

We consider
\begin{align}
\int I_Na_1 I_N a_2 I_N a_3 \left[ I_N(F_4 a_5 a_6) - I_N F_4 I_N a_5 I_N a_6  \right],
\end{align}
and without loss of generality, assume that $N_i \geq 1$ for all $i$. As previously, we note that in order for this expression to be nonzero, we will need $N_4 \gtrsim N$. We let $N_{123}$ be the resulting frequency from the convolution of the first three terms. In this setting, we need to consider two cases:

\begin{itemize}
\item $N_{1} \gtrsim N_4$,
\item $N_{1} \ll N_4$.
\end{itemize}

We consider the first case, in which we use Bernstein and H\"older's inequality and estimate
\begin{align}
&\left| \sum_{N_4 \sim N_1 \geq N_2, N_3, N_5, N_6}  \iint \left( \frac{N^{1-s}}{N_{1}^{1-s} } a_1 \right) a_2a_3  \left(\frac{N^{1-s}}{N_4^{1-s}}F_4 \right) a_5 a_6 \right| \\
& \simeq N^{2-2s} \sum_{N_4 \sim N_1 \geq N_2, N_3, N_5, N_6} N_{1}^{s-1} N_4^{s-1} \|a_1 a_2 a_3 a_5\|_{L^2_{t,x}} \|F_4 a_6\|_{L^2_{t,x}} \\
& \simeq N^{2-2s} \sum_{N_4 \sim N_1 \geq N_2, N_3, N_5, N_6} N_{1}^{2s-2+ \frac{1}{2}} \|a_1 a_2 a_3 a_5\|_{L^2_{t} L^1_x} \|F_4 a_6\|_{L^2_{t,x}} \\
& \simeq N^{2-2s} \sum_{N_4 \sim N_1 \geq N_2, N_3, N_5, N_6} N_{1}^{2s-2+\frac{3}{2}}  \|a_2\|_{L^\infty_t L^2_x} \|a_3\|_{L^\infty_t L^2_x} \|a_1 a_5\|_{L^2_{t,x}} \|F_4 a_6\|_{L^2_{t,x}} .
\end{align}
Since $2s - \frac{1}{2} < 2s$, we can estimate this as in the kinetic term.

In the second case, if $N_1 \ll N_4$, then since the convolution of the first three terms and the convolution of the last three terms are paired, we must have $N_4 \sim N_5$, and we can further estimate based on whether
\begin{itemize}
\item $N_{1} \geq N$,
\item $N_{1} \leq N$.
\end{itemize}
as before.

\subsection*{Two random pieces}
There are two subcases we will consider
\begin{itemize}
\item $N_{1} \gtrsim N_4$
\item $N_{1} \ll N_4$.
\end{itemize}
We again estimate mimicking the kinetic term estimates, to obtain
\begin{align}
&\left| \sum_{N_4 \sim N_1 \geq N_2, N_3, N_5, N_6}  \iint \left( \frac{N^{1-s}}{N_{1}^{1-s} } a_1 \right) a_2a_3  \left(\frac{N^{1-s}}{N_4^{1-s}}F_4 \right) F_5 a_6 \right| \\
& \simeq N^{2-2s} \sum_{N_4 \sim N_1 \geq N_2, N_3, N_5, N_6} N_{1}^{s-1} N_4^{s-1} \|a_1 a_2 a_3 a_6\|_{L^2_{t,x}} \|F_4 F_5\|_{L^2_{t,x}} \\
& \simeq N^{2-2s} \sum_{N_4 \sim N_1 \geq N_2, N_3, N_5, N_6} N_{1}^{2s-2+ \frac{1}{2}} \|a_1 a_2 a_3 a_6\|_{L^2_{t} L^1_x} \|F_4 F_5\|_{L^2_{t,x}} 
\end{align}
and again we can use Bernstein on $a_2, a_3$.

In the second case, if $N_1 \ll N_4$, then since the convolution of the first three terms and the convolution of the last three terms are paired, we must have $N_4 \sim N_5$, and we can further estimate based on whether
\begin{itemize}
\item $N_{1} \geq N$,
\item $N_{1} \leq N$,
\end{itemize}
and again we can estimate as with the kinetic terms. 

\subsection*{Estimate for momentum}
These estimates proceed via direct computation, and we refer as well to the explanation in \cite{colliander2009rough}, the proof of \eqref{eq: pm} is similar to the proof for kinetic part of \eqref{eq: pm}. 
\end{proof}

\section{Proof of the bootstrap lemma and the main theorem}\label{sec: boot}
In this section we establish the main bootstrap argument, Lemma \ref{lem: boot}, as well as the proof of the main Theorem \ref{thm: mainrigor}. First, we recall our ansatz \eqref{eq: ans}:
\begin{align}
u(t,x)&=a(t,x)+F(t,x),\\
a(t,x)&=\frac{1}{\lambda(t)}(\qb+\epsilon)(\frac{x-x(t)}{\lambda(t)})e^{-i\gamma(t)},
\end{align}
where $a$ plays the role of the full solution $u$ in \cite{colliander2009rough}, and satisfies the forced NLS \eqref{eq: eqfora}, and where the parameters $\lambda(t), x(t), b(t), \gamma(t)$ are chosen so that the orthogonality conditions \eqref{eq: modoth1} -- \eqref{eq: modoth4} hold. 
Having establishing the desired energy estimates for $E(I_{N}a)$ and $P(I_{N}a)$ under the bootstrap assumptions of Lemma \ref{lem: boot}, the proof of Lemma \ref{lem: boot} essentially follows as in \cite[Section 4]{colliander2009rough}, with some changes in our current setting which we highlight below. In particular, we will verify that given our estimates on $E(I_{N}a)$ and $P(I_{N}a)$, the key computations in \cite{merle2003sharp, merle2004universality, merle2006sharp, merle2005blow} still hold following the bootstrap scheme in \cite{planchon2007existence}. 

It should be noted that unlike the full solution $u$, the nonlinear component of the solution $a$ does not satisfy an exact mass conservation law, which adds additional technical difficulties in the last step of Section \ref{sub: eekll} below.

\subsection{Energy estimates imply persistence of log-log regime}\label{sub: eekll}
\subsection*{Step 1} We use the rescaled the time variable $s$, where $ds=\lambda^{-2}dt$, and we set $t(s_{0})=0$ and $t(s_{+})=T$. We use the forced NLS \eqref{eq: eqfora} to derive 
\begin{equation}\label{eq: e1}
\begin{aligned}
\partial_{s}\Sigma_{b}+\partial_{s}\epsilon_{1}-M_{-}(\epsilon)+b\Lambda \epsilon_{1} &=
\bigl(\frac{\lambda_{s}}{\lambda}+b\bigr)\Lambda \Sigma_{b}+\tilde{\gamma}_{s}\Theta_{b}+\frac{x_{s}}{\lambda}\nabla \Sigma_{b} +\llsb\Lambda \epsilon_{1} \\
& \hspace{13mm}+\tgs\epsilon_{2}+\xls \nabla \epsilon_{1}+\Im \Psi_{b}-R_{2}(\epsilon)-G_{2},
\end{aligned}
\end{equation}
\begin{equation}\label{eq: e2}
\begin{aligned}
\partial_{s}\tb+\partial_{s}\epsilon_{2}+M_{+}+b\epsilon_{2}=&\llsb\Lambda \tb-\tgs\sbb+\xls\nabla \tb\\
&\hspace{6mm}+ \lls b\Lambda \epsilon_{2}-\tilde{\gamma}_{s}\epsilon_{1}+\xls \nabla \epsilon_{2}-\Re \Psi_b+R_{1}(\epsilon)+G_{1}.
\end{aligned}
\end{equation}
where $\tilde{\gamma}=-s-\gamma$, and  $M_{+}, M_{-}$, and $R_{1}, R_{2}$ are defined via
\begin{equation}
|\qbb+\epsilon|^{2}(\qbb+\epsilon)-|\qbb|^{2}\qb=M_{+}(\epsilon)+iM_{-}(\epsilon)+R_{1}(\epsilon)+iR_{2}(\epsilon),
\end{equation}
i.e. $M_{\pm}$ picks up the first order term (w.r.t to $\epsilon$), and $R_{1}+iR_{2}$ picks up the second and higher order term (in $\epsilon$).

And $G=G_{1}+iG_{2}$ is defined via
\begin{equation}\label{eq: extrag}
G(t,x)=-\bigl(|\qbb+\epsilon+\tilde{F}|(\qb+\tilde{F})-|\qbb+\epsilon|^{2}(\qb+\epsilon)\bigr),
\end{equation}
where $\tilde{F}(t,x)=\lambda(t)F(t,\lambda(t)x+x(t))e^{i\gamma(t)}$.

Note that \eqref{eq: e1} and \eqref{eq: e2} are exactly equations (4.2) and (4.3) in \cite{colliander2009rough} except that we have two extra terms, $G_{1}$ and $G_{2}$. We will see that these terms can be treated perturbatively due to the fact that $F$ is the linear  evolution of randomized initial data.

\subsection*{Step 2} We now derive some preliminary estimates using (almost) conservation laws, and modulation estimates. In particular, using our control of $E(I_{N}a)$ and $P(I_{N}a)$ obtained in the previous section, we derive the following result.
\begin{lem}\label{lem: conser}
For all $s\in [s_{0},s+]$,
\begin{align}
\biggl|2(\epsilon_1, \sbb &+b\Lambda \tb-\Re \Psi_{b}+2(\epsilon_{2}, \tb-b\Lambda \sbb-\Im \Psi_{b})) \nonumber\\
 \hspace{14mm} &-2\left(2\Xi+\int |I_{N\lambda}\nabla \epsilon|^{2}-3Q^{2}I_{N\lambda}\epsilon_{1}^{2}-\int Q^{2}I_{N\lambda \epsilon_{2}}^{2}\right) \biggr | \nonumber \\
& \leq \delta_{0}\left(\errh+\errm\right)+\Gamma_{b}^{1-C\eta}, \\
|(\epsilon_{2}, \nabla Q)|& \leq \delta_{0}(\errh)^{1/2}+\Gamma_{b}^{10}.
\end{align}
\end{lem}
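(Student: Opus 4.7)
The plan is to expand the almost-conserved quantities from Proposition~\ref{pro: almost} using the ansatz \eqref{eq: ans}, extract the desired quadratic form in $\epsilon$ at leading order, and show that all remainders fit either into the coercive term $\delta_0(\errh + \errm)$ or into the absolute error $\Gamma_b^{1-C\eta}$. If $a(t,x) = \frac{1}{\lambda}h(y)e^{-i\gamma}$ with $h = \qbb + \epsilon$ and $y = (x-x(t))/\lambda$, one checks that $I_N a(t,x) = \frac{1}{\lambda}(I_{N\lambda}h)(y)e^{-i\gamma}$, so $\lambda^2 E(I_N a) = E(I_{N\lambda}(\qbb + \epsilon))$ and $\lambda P(I_N a) = P(I_{N\lambda}(\qbb + \epsilon))$. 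Combining with Proposition~\ref{pro: almost} yields $2 E(I_{N\lambda}(\qbb + \epsilon)) + 2\Xi = O(\lambda^{\alpha_1})$ and $|P(I_{N\lambda}(\qbb+\epsilon))| \lesssim \lambda^{\alpha_1}$, and the bootstrap assumption \eqref{eq: balambdasmall} ensures any positive power of $\lambda$ is dominated by both $\Gamma_b^{1-C\eta}$ and $\Gamma_b^{10}$.

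For the first assertion, I expand $2E(I_{N\lambda}(\qbb + \epsilon))$ in powers of $\epsilon$. The constant term $2E(I_{N\lambda}\qbb)$ equals $2E(\qbb)$ up to exponentially small errors, since $\qbb$ is spatially localized and $N\lambda \sim \lambda^{-\delta} \gg 1$, and $|E(\qbb)| \lesssim \Gamma_b^{1-C\eta}$ by the elliptic estimates in Section~\ref{sec:ell}. The linear-in-$\epsilon$ contribution, after integration by parts, is $2\Re(\epsilon, -\Delta I_{N\lambda}\qbb - |I_{N\lambda}\qbb|^2 I_{N\lambda}\qbb)$; replacing $I_{N\lambda}\qbb$ by $\qbb$ with negligible error and substituting the profile equation \eqref{eq:qb} as $-\Delta\qbb - |\qbb|^2\qbb = -\qbb + ib\Lambda\qbb + \Psi_b$, then separating real and imaginary parts via $\epsilon = \epsilon_1 + i\epsilon_2$ and $\qbb = \Sigma_b + i\Theta_b$, produces precisely $2(\epsilon_1, \Sigma_b + b\Lambda\Theta_b - \Re\Psi_b) + 2(\epsilon_2, \Theta_b - b\Lambda\Sigma_b - \Im\Psi_b)$. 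The quadratic-in-$\epsilon$ piece, with $v := I_{N\lambda}\epsilon$ and $w := I_{N\lambda}\qbb$, equals $\int |\nabla v|^2 - \int|w|^2|v|^2 - 2\int(\Re \bar w v)^2$; using \eqref{eq: bclose} so that $w$ is close to the real ground state $Q$, this collapses to $\int|\nabla I_{N\lambda}\epsilon|^2 - 3\int Q^2(I_{N\lambda}\epsilon_1)^2 - \int Q^2(I_{N\lambda}\epsilon_2)^2$ up to contributions absorbed into the error. The cubic and quartic remainders in $\epsilon$ are pointwise dominated by $|\epsilon|^3 Q + |\epsilon|^4$; by the bootstrap smallness of $\|\epsilon\|_{L^2}$ from \eqref{eq: basmall} together with Gagliardo--Nirenberg interpolation, they are absorbed into $\delta_0 (\errh + \errm)$ once $\alpha$ is chosen small.

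For the second assertion, I expand $P(I_{N\lambda}(\qbb+\epsilon))$. Using $P(\qbb) = 0$ and integration by parts, the linear term is $2\Im\int \qbb\nabla\bar\epsilon$; since $\qbb$ is close to $Q$ in the weighted $C^3$ norm by \eqref{eq: bclose}, this equals $2(\epsilon_2, \nabla Q)$ modulo an error of size $o(1)(\errm)^{1/2}$. The bilinear remainder $P(\epsilon)$ is controlled by $\|\epsilon\|_{L^2}\|\nabla\epsilon\|_{L^2} \lesssim \delta_0 (\errh)^{1/2}$ via the bootstrap mass smallness. Combining these with $|P(I_{N\lambda}(\qbb+\epsilon))| = \lambda|P(I_N a)| \lesssim \lambda^{\alpha_1} \leq \Gamma_b^{10}$ gives the stated bound.

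The main obstacle I anticipate is the careful accounting of the three distinct sources of error: the frequency-truncation discrepancy $I_{N\lambda}\qbb - \qbb$, the profile discrepancy $\qbb - Q$, and the higher-order nonlinear terms in $\epsilon$. Each must be placed in either $\delta_0(\errh + \errm)$ or the absolute error bucket $\Gamma_b^{1-C\eta}$. None of these is individually deep, but the bookkeeping relies essentially on the log-log smallness $\lambda \leq e^{-\Gamma_b^{-2/3}}$, which renders every positive power of $\lambda$ subordinate to $\Gamma_b^{1-C\eta}$.
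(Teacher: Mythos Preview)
Your proposal is correct and follows essentially the same approach as the paper, which itself defers entirely to the derivation of (4.5) and (4.6) in \cite{colliander2009rough}: rescale the almost-conserved energy and momentum from Proposition~\ref{pro: almost} into the $y$-variable, expand in powers of $\epsilon$ around $\qbb$, and use the bootstrap assumption \eqref{eq: balambdasmall} to absorb all $\lambda^{\alpha_1}$ contributions into $\Gamma_b^{1-C\eta}$. The paper provides no further detail beyond citing these two ingredients, so your expansion is in fact more explicit than what appears in the text.
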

Here $\delta_{0} > 0$ is some small constant. This step is exactly same as the derivation of (4.5) and (4.6) in \cite{colliander2009rough}.  In this step, we rely on the bootstrap assumption \eqref{eq: balambdasmall}, and almost conservation law Proposition \ref{pro: almost}.

\medskip
By substituting \eqref{eq: modoth1}--\eqref{eq: modoth4} into \eqref{eq: e1} and \eqref{eq: e2}, we derive the following standard modulation estimate.
\begin{lem}\label{lem: mode}
For  $s\in [s_{0},s+]$
\begin{equation}\label{eq: mode1}
\biggl|\llsb\biggr|+|b_{s}|+|x_{s}|\lesssim \Xi (s)+\errh+\errm+\Gamma_{b}^{1-C\eta}+\FFF(s),
\end{equation}
\begin{equation}\label{eq: mode2}
\biggl|\tgs-\frac{(\epsilon_{1}, L_{+}\Lambda^{2} Q)}{\|\Lambda Q\|_{L_{x}^{2}}^{2}}\biggr|\lesssim\Gamma_{b}^{1-C\eta}+\FFF(s)
\end{equation}
where $\delta_{0} > 0$ is a small constant and $\FFF(s)\geq 0$ satisfies
\begin{equation}\label{eq: okerror}
\int_{s}^{s+}\FFF(s)ds\lesssim \lambda(s)^{\alpha_{2}},\qquad \forall s\in [s_{0}, s_{+}]
\end{equation}
where $\alpha_{2}>0$
\end{lem}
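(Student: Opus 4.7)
The plan is to follow the standard Merle--Rapha\"el modulation argument from \cite{merle2003sharp, merle2005blow}, tracking the additional forcing $G = G_1 + iG_2$ from \eqref{eq: extrag} as a (perturbative) source term. Differentiating each of the four orthogonality conditions \eqref{eq: modoth1}--\eqref{eq: modoth4} with respect to the rescaled time $s$ and substituting the modulated evolution equations \eqref{eq: e1}--\eqref{eq: e2} produces a $4 \times 4$ linear system in the unknowns $\bigl(\llsb,\ b_s,\ \tgs,\ x_s\bigr)$. Under the bootstrap assumption \eqref{eq: basmall}, the coefficient matrix is a small perturbation of the non-degenerate limiting matrix computed in \cite{merle2003sharp}, using the exponential localization and $b$-smoothness of $\sbb, \tb$, and is therefore invertible with uniformly bounded inverse.

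Inverting the system expresses each of $|\llsb|$, $|b_s|$, $|x_s|$, and $\tgs - (\epsilon_1, L_+ \Lambda^2 Q)/\|\Lambda Q\|_{L_x^2}^2$ as a linear combination of pairings of the right-hand sides of \eqref{eq: e1}--\eqref{eq: e2} against the exponentially localized test functions $|y|^2\sbb$, $|y|^2\tb$, $y\sbb$, $y\tb$, $\Lambda\sbb$, $\Lambda\tb$, $\Lambda^2\sbb$, $\Lambda^2\tb$. As in \cite{colliander2009rough}, the contributions split into four groups: (i) the $R_1,R_2$ terms are bounded by $\errm$ via the exponential localization; (ii) the $\Psi_b$ terms contribute $\Gamma_b^{1-C\eta}$ by the elliptic estimates of Section \ref{sec:ell}; (iii) the kinetic cross-terms yield the $\Xi(s) + \errh$ piece in \eqref{eq: mode1} (and vanish by the algebraic identity involving $L_+\Lambda^2 Q$ in \eqref{eq: mode2}) once Lemma \ref{lem: conser} is invoked to rewrite the $(\epsilon, \Lambda^j \qbb)$-type pairings; and (iv) the genuinely new contributions $(G_j, \phi)$ from the random forcing, which we collect into $\FFF(s)$.

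To verify the time-integral bound \eqref{eq: okerror}, expand $G$ into monomials, each carrying at least one factor of $\tilde F(t,x) = \lambda(t) F(t, \lambda(t)x + x(t)) e^{i\gamma(t)}$ and remaining factors among $\qbb, \epsilon, \tilde F$. For any exponentially localized test function $\phi$, the pairing $(\text{monomial}, \phi)$ unrescales to a spacetime integral of $F$ against a profile spatially concentrated at scale $\lambda$ near $x(t)$; H\"older in space, the fact that a scale-$\lambda$ localized profile has $L^{q'}_x$ norm $\sim \lambda^{2/q}$, and the probabilistic $L^p_t L^q_x$ estimates \eqref{Lp}--\eqref{Lp2} on $F$ then deliver a positive power of $\lambda(t)$ per unit $t$. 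Converting via $ds = \lambda^{-2} dt$ and summing over the LWP intervals using \eqref{eq: balwpinteval} and the geometric decay of $\lambda$ yields $\int \FFF(s)\,ds \lesssim \lambda(s)^{\alpha_2}$ for some $\alpha_2 > 0$.

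The main obstacle is the bookkeeping of $\lambda$ powers for the monomials linear in $\tilde F$, namely $\qbb^2 \tilde F$ and $|\qbb|^2 \overline{\tilde F}$ (together with the similar terms involving one factor of $\epsilon$ in place of one $\qbb$). For these the gain is the most delicate, and comes precisely from the mechanism described in Section \ref{sub: ov}: the pairing of a linear random evolution against a spatially concentrated profile produces a clean $\lambda^{1/2-}$-type factor. Monomials of higher order in $\tilde F$ are easier because every additional factor of $F$ supplies further smallness through \eqref{Lp2}. Once these gains are established uniformly in $s$, summing over the LWP windows gives \eqref{eq: okerror} and completes the proof.
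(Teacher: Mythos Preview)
Your proposal is correct and follows essentially the same route as the paper: differentiate the four orthogonality conditions, invert the resulting nondegenerate $4\times 4$ system as in Merle--Rapha\"el/Colliander--Rapha\"el, and absorb the new random forcing pairings $(G_j,\phi)$ into $\FFF$ by unrescaling, applying H\"older against the localized test profile, using the probabilistic $L^p_tL^q_x$ bounds on $F$, and summing over LWP windows via \eqref{eq: balwpinteval}. This is exactly the argument the paper gives around \eqref{eq: gerr}--\eqref{eq: absorb2}, where the concrete choice $q=4$ is made and the key smallness comes from $\|F\|_{L^4_{t,x}[I_k^j]}\lesssim |I_k^j|^{1/4-}$; your identification of the linear-in-$\tilde F$ monomials as the worst case and the resulting $\lambda^{1/2-}$ gain matches the paper's computation. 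One small correction: the appearance of $\Xi(s)+\errh$ on the right of \eqref{eq: mode1} does not come from invoking Lemma~\ref{lem: conser} but rather from the $I_{N\lambda}$ truncation already present in \cite{colliander2009rough}; Lemma~\ref{lem: conser} is used later in the virial step, not here.
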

\begin{rem}
The extra term $\FFF$, is completely perturbative, though it is only estimated in time average sense, however as this term appears when estimating the time derivative of the modulation parameters, this is sufficient. Heuristically, point-wise,
\[
\FFF(s)\sim -\partial_{s}\lambda(s)^{\alpha_{2}}\sim -\frac{\lambda_{s}}{\lambda}\lambda^{\alpha_{2}}\sim b\lambda^{\alpha_{2}}\ll \Gamma_{b}^{100}.
\]
\end{rem}

Lemma \ref{lem: mode} should be compared to (4.7) and (4.8) in \cite{colliander2009rough} (in the $H^{s}$ setting). Compared to the standard modulation estimates in the $H^{1}$ setting, the term $\FFF(s)$ is introduced since to account for the cut-off $I_{N\lambda}$ in the estimate.  

In our setting, we need to verify that the extra term in \eqref{eq: e1} and \eqref{eq: e2} is also perturbative. To see this, we briefly recall how modulation estimate is done. To derive \eqref{eq: mode1} and \eqref{eq: mode2}, one substitutes the four orthogonality  conditions \eqref{eq: modoth1}-\eqref{eq: modoth4} into \eqref{eq: e1} and \eqref{eq: e2} to cancel the $\partial_{s}\epsilon_{1}, \partial_s \epsilon_{2}$ terms. For example, to substitute \eqref{eq: modoth1} into \eqref{eq: e1} and \eqref{eq: e2}, one needs to take the $L_{x}^{2}$ inner product of \eqref{eq: e1} and $y^{2}\qbb$, and the $L_{x}^{2}$ inner product of \eqref{eq: e2} and $y^{2}\theta_{b}$, respectively, and sum up. Compared to \cite{colliander2009rough}, we obtain extra terms resulting from $G_{1}, G_{2}$, which satisfy
\begin{align}\label{eq: gerr}
(|y|^{2}|\qbb|, |G|) &\lesssim \int |y|^{2}|\qbb|\bigl(|\qbb+\epsilon|^{2}|\tilde{F}|+|\qbb+\epsilon||\tilde{F}|^{2}\bigr).
\end{align} 
We then claim that for any $s_{1}\in [s_{0},s_{+})$, we have
\begin{equation}\label{eq: absorb}
\int_{s_{1}}^{s_{+}} \int |y|^{2}|\qbb|(|\qbb+\epsilon|^{2}|\tilde{F}|+|\qbb+\epsilon||\tilde{F}|^{2})  \lesssim \lambda^{\alpha_{2}}(s_{1}),\quad \text{ for some } \alpha_{2}>0,
\end{equation}
and thus, we may absorb these extra terms into the $\FFF$ which satisfies \eqref{eq: okerror}. To establish this bound, we proceed as follows: let $t(s_{1})\in [t_{k_{1}}, t_{k_{1}+1}]$ and $T_{+}=s_{+}$, and $\lambda(T_{+})\sim 2^{-k_{+}}$. We can split $[t_{k_{1}},T_{+})$ into disjoint intervals $\{ I_{k}\}_{k_1}^{k+}$, and we may split every $I_{k}$ into disjoint LWP intervals $I_{k}^{j}=[\tau_{k}^{j}, \tau_{k}^{j+1}]$ such that $|I_{k}^{j}|\sim \lambda(t_{k})^{-2}$. Recall that for any $k$, there exists at most $k$ such intervals, via bootstrap assumption \eqref{eq: balwpinteval}.
Now, we may estimate the LHS of \eqref{eq: absorb}, in the original non-rescaled variable, as
\begin{equation}\label{eq: absorb2}
\begin{aligned}
&\int_{s_{1}}^{s_{+}} \int |y|^{2}|\qbb|(|\qbb+\epsilon|^{2}\tilde{F}+|\qbb+\epsilon||\tilde{F}|^{2})\\
& \lesssim \sum_{t_{k_{1}}}^{T_{+}} \int \frac{1}{\lambda(t)^{1/2}}(\|a\|_{L_{x}^{4}}^{2}\|F\|_{L_{x}^{4}}+\|a\|_{L_{x}^{4}}\|F\|_{L_{x}^{4}}^{2} + \|F\|_{L_{t,x}^{4}}^3)dt\\
\lesssim &\sum_{k=k_{1}}^{k_{+1}}\sum_{j}\left(\|a\|_{L_{t,x}^{4}[I_{k}^{j}]}^{2}\|F\|_{L_{t,x}^{4}I_{k}^{j}}+\|
a\|_{L_{t,x}^{4}[I_{k}^{j}]}\|F\|_{L_{t,x}^{4}[I_{k}^{j}]}^{2} + \|F\|_{L_{t,x}^{4}[I_{k}^{j}]}^3 \right).
\end{aligned}
\end{equation} 
Note that up to an exceptional set of small probability (depending on $p$), one has 
\begin{equation}
\|F(t,x)\|_{L_{t,x}^{p}}\lesssim 1,
\end{equation}
which, combined with the estimate $\|F(t,x)\|_{L_{t}^{\infty}L_{x}^{2}}\lesssim 1$,  gives
\begin{equation}
\|F(t,x)\|_{L_{t}^{4}L_{x}^{4}[I_{k}^{j}]}\lesssim |I_{k}^{j}|^{\alpha_{p}}, \text{ where } \lim_{p\rightarrow \infty}\alpha_{p}=\frac{1}{4},
\end{equation}
By the standard local theory\footnote{Here we can simply apply the usual deterministic $L_{x}^{2}$ local theory rather than the modified probabilistic version in the current article.}
\begin{equation}
\|a\|_{L_{t,x}^{4}[I_{k}^{j}]}\lesssim 1,
\end{equation}
hence we can choose $p$ large enough, and estimate \eqref{eq: absorb2} by 
\begin{equation}
\sum_{k=k_{1}}^{k_{+}}k 2^{-2k\alpha_{p}}\lesssim \lambda(s_{1})^{(-2\alpha_{p})+},
\end{equation}
which establishes \eqref{eq: absorb}, and consequently, Lemma \ref{lem: mode}. We will repeatedly rely on the above argument to handle the extra terms caused by $G_{1}, G_{2}$, we do not repeat the details.

\subsection*{Step 3} This step mainly concerns the derivation of the (local) virial estimate, as well as its sharpening via the tail term $\zbb$. This is the core part of the Merle-Rapha\"el \cite{} log-log analysis.  The key point here, similar to \cite{colliander2009rough}, is to make sure the original Merle-Rapha\"el computation remains valid by showing all extra terms introduced are perturbative.

One has following virial estimates. \footnote{Estimate \eqref{eq: lv} was called as local virial estimate in \cite{merle2003sharp}, and the global virial estimate in \cite{colliander2009rough}.}
\begin{lem}\label{lem: localvirial}
There exists $c_0 > 0$ such that for all $s \in [s_0, s_+)$, so that
\begin{equation}\label{eq: lv2}
b_{s}\geq c_{0}(\Xi(s)+\errh+\errm)-\Gamma_{b}^{1-C\eta}-\FFF(s),
\end{equation}
where $\FFF$ satisfy \eqref{eq: okerror}.
\end{lem}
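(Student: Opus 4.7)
The plan is to follow the local virial framework of Merle--Rapha\"el \cite{merle2003sharp, merle2005blow, merle2006sharp} as adapted through the I-method in \cite{colliander2009rough}, and then to argue that the additional source terms $G_{1}, G_{2}$ appearing in \eqref{eq: e1}--\eqref{eq: e2} due to the random forcing contribute only to the perturbative error $\FFF(s)$.

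First, I would derive the basic virial identity by pairing \eqref{eq: e1} with $-\Lambda \tb$ (together with a $|y|^{2}\sbb$-type weight) and \eqref{eq: e2} with $\Lambda \sbb$, and summing. Substituting the orthogonality relations \eqref{eq: modoth1}--\eqref{eq: modoth4} to cancel $\partial_{s}\epsilon_{j}$ and applying the modulation bounds \eqref{eq: mode1}--\eqref{eq: mode2}, the term $\llsb$ is converted into $b_{s}$ by the elliptic identity \eqref{eq:qb} for $\qbb$. Sharpening the argument through the cut-off tail $\zbb$ and its flux estimate \eqref{eq: flux} replaces the polynomial bound $\Gamma_{b}^{1-C\eta}$ by the sharp exponential one. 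At the end of this step one arrives at
\[
b_{s} \geq H(\epsilon) - 2\lambda^{2}E(I_{N}a) - \Gamma_{b}^{1-C\eta} - \FFF(s) + \mathcal{E}_{G}(s),
\]
where $H$ is the Merle--Rapha\"el local virial quadratic form and $\mathcal{E}_{G}(s)$ collects every contribution of $G_{1}, G_{2}$.

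Next I would invoke Lemma \ref{lem: conser} to trade $-2\lambda^{2}E(I_{N}a)$ for $2\Xi(s) + \errh - \int(3Q^{2} I_{N\lambda}\epsilon_{1}^{2} + Q^{2}I_{N\lambda}\epsilon_{2}^{2})$ up to $\Gamma_{b}^{1-C\eta}$ and up to $\delta_{0}(\errh + \errm)$. Combining with $H(\epsilon)$ one recovers the full Merle--Rapha\"el quadratic form in the $I_{N\lambda}\epsilon$ variable; standard spectral theory of $L_{\pm}$ gives coercivity up to six bad directions, four of which are killed by \eqref{eq: modoth1}--\eqref{eq: modoth4}, the $\Lambda Q$ direction is controlled by $\Xi(s)$ (through the energy), and $\nabla Q$ is controlled by the momentum estimate in Lemma \ref{lem: conser}. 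Passing from $\int|\nabla\epsilon|^{2}$ to $\errh$ on the right-hand side produces a commutator located at frequencies $\gtrsim N\lambda$, which under \eqref{eq: bah1control} and \eqref{eq: balambdasmall} is absorbed exactly as in \cite[(4.5)--(4.8)]{colliander2009rough} into $\FFF(s)$ obeying \eqref{eq: okerror}. The resulting lower bound is $c_{0}(\Xi + \errh + \errm) - \Gamma_{b}^{1-C\eta} - \FFF(s)$.

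The main obstacle is showing that $\mathcal{E}_{G}(s)$ may be absorbed into $\FFF(s)$. The terms are schematically of the form $\int \rho(y)\,|\qbb+\epsilon|^{j}|\tilde F|^{3-j}$ for $j=0,1,2$, with $\rho$ at most polynomial in $b$ and exponentially localized (coming from the weights $\Lambda \tb$, $\Lambda \sbb$, $|y|^{2}\qbb$, etc.). After undoing the self-similar rescaling, each such term takes precisely the form estimated in \eqref{eq: absorb}--\eqref{eq: absorb2}: on every LWP subinterval $I_{k}^{j}\subset[t_{k},t_{k+1}]$ the deterministic $L^{2}$ local theory gives $\|a\|_{L^{4}_{t,x}(I_{k}^{j})}\lesssim 1$, while the probabilistic bound \eqref{Lp} combined with H\"older in time gives $\|F\|_{L^{4}_{t,x}(I_{k}^{j})}\lesssim |I_{k}^{j}|^{\alpha_{p}}$ with $\alpha_{p}\to 1/4$ as $p\to\infty$; summing over $k$ via \eqref{eq: balwpinteval} and choosing $p$ large enough yields the time-averaged bound $\int_{s_{1}}^{s_{+}}\mathcal{E}_{G}(s)\,ds \lesssim \lambda(s_{1})^{\alpha_{2}}$, which is exactly \eqref{eq: okerror}. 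Care is needed only in ensuring that no factor produced by the weight $\rho$ can cancel the smallness supplied by $\lambda^{\alpha_{2}}$, but since $\rho$ depends on $b$ (not $\lambda$) and $b\ll 1$ is bounded below by the log-log regime, this is harmless.
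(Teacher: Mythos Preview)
Your proposal is correct and follows essentially the same route as the paper: pair \eqref{eq: e1} with $-\Lambda\Theta_{b}$ and \eqref{eq: e2} with $\Lambda\Sigma_{b}$, substitute \eqref{eq: modoth3}, run the Merle--Rapha\"el/Colliander--Rapha\"el virial computation, and absorb the new $G_{1},G_{2}$ contributions into $\FFF$ by the same argument as in \eqref{eq: absorb}--\eqref{eq: absorb2}; the paper makes exactly this reduction, bounding the extra terms by $(\,|\Lambda\qbb|,|G|\,)$ and referring back to the modulation-estimate computation. One small point: the sentence about sharpening via the tail $\zbb$ and the flux estimate \eqref{eq: flux} does not belong here---that refinement is what distinguishes Lemma~\ref{lem: movirial} from Lemma~\ref{lem: localvirial}, and the present statement only claims the $\Gamma_{b}^{1-C\eta}$ bound, so $\zbb$ plays no role in this proof.
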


\begin{lem}\label{lem: movirial} 
Let
\begin{equation}
f_{1}:=\frac{b}{4}\|y\qbb\|_{2}^{2}+\frac{1}{2}\Im \int y\nabla \zbb\overline{\zbb}+(\epsilon_{2},\Lambda \Re \zbb)-(\epsilon_{1},\Lambda \Im \zbb).
\end{equation}
Then there holds for a universal constant $c_1$ such that for all $s \in [s_0, s_+)$ that
\begin{equation}\label{eq: movirial}
\partial_{s}f_{1}(s)\geq c_{1}(\Xi(s)+\errh+\errm+\Gamma_{b})-\frac{1}{\delta_{1}}\int_{A\leq |x|\leq 2A}|\xi|^{2}-\FFF(s),
\end{equation}
where $\FFF$ satisfies \eqref{eq: okerror}.
\end{lem}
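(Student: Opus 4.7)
\textbf{Proof proposal for Lemma \ref{lem: movirial}.}

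The plan is to adapt the Merle--Rapha\"el derivation of the refined virial identity from \cite{merle2006sharp}, with the modifications of \cite{colliander2009rough} to accommodate the $I$-operator truncation, and with one new input: absorbing the contributions of the random forcing terms $G_1, G_2$ in \eqref{eq: e1}--\eqref{eq: e2} into an $\mathcal{F}(s)$ satisfying \eqref{eq: okerror}. First I would differentiate $f_1$ in the rescaled time $s$. The first two algebraic pieces of $f_1$ are chosen precisely so that, using $b_s \partial_b$ and the defining PDE $\Delta \zbb - \zbb + ib\Lambda \zbb = \Psi_b + F_b$, one generates the contribution $-b\,\Re(\zbb,\Lambda F_b)$, which by the flux identity \eqref{eq: flux} is bounded below by $c\,b\,\Gamma_b$.

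For the remaining two terms of $f_1$, I would substitute \eqref{eq: e1}--\eqref{eq: e2} for $\partial_s\epsilon_j$. The contribution of the linearization $M_\pm(\epsilon)$ combines with the cubic source $\Psi_b$ to produce a coercive quadratic form in $\epsilon$ that, after disposing of its four neutral directions via the orthogonality conditions \eqref{eq: modoth1}--\eqref{eq: modoth4} and the almost-conservation estimates of Lemma \ref{lem: conser}, dominates $\Xi(s) + \int |\nabla I_{N\lambda}\epsilon|^2 + \int \epsilon^2 e^{-|y|}$ up to a $\Gamma_b^{1-C\eta}$ loss. This is the same coercivity argument as in \cite[Proposition 4]{merle2006sharp}. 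The modulation error terms of schematic form $(\lambda_s/\lambda + b)(\cdot) + \tilde\gamma_s(\cdot) + (x_s/\lambda)(\cdot)$ are then absorbed via Lemma \ref{lem: mode}, producing $\Gamma_b^{1-C\eta}$ plus a term fitting into $\mathcal{F}(s)$, while the truncation errors from $\zbb = \psi_A \zb$ are localized on $\{A \leq |x|\leq 2A\}$ and match the allowed $\delta_1^{-1}\int_{A\leq|x|\leq 2A}|\xi|^2$ on the right-hand side, again exactly as in \cite{merle2006sharp}.

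The main new difficulty is the control of the $G$-contributions, schematically $\int G_j \,\Lambda(\Re\zbb\text{ or }\Im\zbb)\,dy$. Pointwise, $|G| \lesssim |\qbb+\epsilon|^2|\tilde F| + |\qbb+\epsilon||\tilde F|^2 + |\tilde F|^3$, and $\Lambda\zbb$ is bounded on its support $\{|y| \lesssim A_b\}$ with $A_b$ only polynomial in $\Gamma_b^{-1}$, so the $A_b^{O(1)}$ factor is harmless in the regime $\lambda \ll e^{-\Gamma_b^{-2/3}}$. To verify that these terms integrate to yield a qualifying $\mathcal{F}(s)$, I would return to physical time via $dt = \lambda^2 ds$ and run the same summation scheme as in the proof of \eqref{eq: absorb}: on each local well-posedness interval $I_k^j$ of length $\sim \lambda(t_k)^{2-\epsilon_2}$, one pairs the deterministic $L_{t,x}^{4}$ bound on $a$ with an $L_{t,x}^{4+}$ bound on $F$, the latter being of size $|I_k^j|^{\alpha_p}$ with $\alpha_p \to 1/4$ as $p \to \infty$ by H\"older and \eqref{Lp}. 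Summing the resulting geometric series in $k$ yields a total bound $\lesssim \lambda(s)^{\alpha_2}$ for some $\alpha_2 > 0$, which can be written pointwise as $-\partial_s \lambda^{\alpha_2}$, exactly the structure required for $\mathcal{F}(s)$.

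I expect the coercivity reorganization and the flux identity to be essentially identical to \cite{merle2006sharp, colliander2009rough} and require only bookkeeping. The only genuinely new work is the $G$-absorption argument of the last paragraph; the delicate point there is not the pointwise bound, which is crude, but rather ensuring that the gain in $|I_k^j|^{\alpha_p}$ strictly beats the number $\sim k$ of intervals at scale $\lambda(t_k)$ and that the resulting time-integrated error is genuinely a positive power of $\lambda$, so that it can be written as $-\partial_s(\lambda^{\alpha_2})$ as in the remark following Lemma \ref{lem: mode}.
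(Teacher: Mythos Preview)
Your proposal is correct and follows essentially the same approach as the paper. The paper's treatment of Lemma~\ref{lem: movirial} is in fact brief: it refers to \cite[Lemma 4.4]{colliander2009rough} and to the Merle--Rapha\"el computation, and states that the only new terms---those arising from $G_1, G_2$ when $\partial_s\epsilon_j$ is substituted via \eqref{eq: e1}--\eqref{eq: e2}---are absorbed into $\FFF(s)$ by the same LWP-interval summation argument used for \eqref{eq: absorb}, which is exactly the argument you outline in your final paragraph.
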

\begin{rem}
In the previous lemma, one should think of $f_1$ as a modified version of $b$, in particular satisfying  $f_{1}\sim \frac{1}{4}\|yQ\|_{2}^{2}b$. 
\end{rem}

Lemma \ref{lem: localvirial} and Lemma \ref{lem: movirial} should be compared to  Lemma 4.3 and Lemma 4.4, respectively, in \cite{colliander2009rough}. We can again use the argument from Lemma \ref{lem: mode} above to argue that the extra terms created by $G_{1}$ and $G_{2}$ in \eqref{eq: e1} and \eqref{eq: e2} can also be absorbed into the error $\FFF$. For example, to derive Lemma \ref{lem: localvirial}, one computes the $L_{x}^{2}$ inner product of $-\Lambda \Theta_{b}$ and \eqref{eq: e1}, and the $L_{x}^{2}$ inner product of $\Lambda \Sigma_{b}$ and  \eqref{eq: e2}, and sums them together, substituting into \eqref{eq: modoth3}. Ultimately, the extra terms caused by the $G_{1}, G_{2}$ are controlled by 
\begin{equation}
(\Lambda\qbb|, |G|)\lesssim \int |\Lambda\qbb|(|\qbb+\epsilon|^{2}|\tilde{F}|+|\qbb+\epsilon||\tilde{F}|^{2}),
\end{equation}
which can be handled similarly to \eqref{eq: gerr} above. We omit the details.

\subsection*{Step 4} In this step, we need to control the $L^{2}_x(\RRR^2)$ dispersion at infinity. Recall that 
\[
A=A_{b}=e^{\frac{a\pi}{b}},
\]
and $\Psi$ is a radial cut-off function, with $\Psi=0$, for $|x|\leq 1/2$ and $\Psi=1$ for $|x|\geq 3$.  Let $\Psi_{A}(x)=\Psi(\frac{x}{A})$. And one has 
\begin{lem}\label{lem: L2disper}
There holds for some universal constants $C, c_3 > 0$ so that for all $s \in [s_0, s_+)$, it holds that
\begin{equation}\label{eq: l2dis}
\partial_{s}\int \Psi_{A}|\epsilon|^{2}\geq c_{3}b\int_{A\leq |x|\leq 2A}|\epsilon|^{2}-\Gamma_{b}^{a/2}\int |\nabla I_{N\lambda}\epsilon|^{2}-\Gamma_{b}^{1+Ca}-\FFF(s)-\partial_{s}\HHH(s),
\end{equation}
where $\FFF$ sastisfies \eqref{eq: okerror}, and $\HHH$ satisfies the estimate
\begin{equation}\label{eq: absorb3}
|\HHH(s)|\lesssim \lambda^{\alpha_{3}}(s), \text{ for some } \alpha_{3}>0.
\end{equation}
\end{lem}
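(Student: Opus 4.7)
The plan is to follow the $L^2$ dispersion computation of Merle-Rapha\"el \cite{merle2006sharp} (as adapted to the $H^s$ setting in \cite{colliander2009rough}), with additional work to absorb the random forcing contributions from $G_1, G_2$. First I would directly compute
\[
\partial_s \int \Psi_A |\epsilon|^2 = 2 \int \Psi_A(\epsilon_1 \partial_s \epsilon_1 + \epsilon_2 \partial_s \epsilon_2)
\]
and substitute \eqref{eq: e1}--\eqref{eq: e2}. The main positive flux term $c_3 b \int_{A \leq |x| \leq 2A} |\epsilon|^2$ should arise from the $b \Lambda \epsilon$ summands on the left-hand sides of \eqref{eq: e1}, \eqref{eq: e2}: integration by parts produces $-\tfrac{b}{2} \int \nabla \cdot (y \Psi_A) |\epsilon|^2$, and a convex choice of $\Psi$ ensures $-\nabla \cdot (y \Psi_A) \gtrsim \mathbf{1}_{A \leq |x| \leq 2A}$.

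Next I would absorb the standard error terms exactly as in \cite{colliander2009rough}: the $\Psi_b$ contribution is exponentially concentrated near the origin and contributes at most $\Gamma_b^{1+Ca}$; the nonlinear terms $R_1, R_2$ are estimated by Gagliardo-Nirenberg together with the bootstrap assumption \eqref{eq: bah1control} and distributed between $\Gamma_b^{a/2} \int |\nabla I_{N\lambda}\epsilon|^2$ and $\Gamma_b^{1+Ca}$; the modulation contributions $(\llsb)\Lambda \epsilon$, $\tgs \epsilon$, $(\xls)\nabla \epsilon$ are controlled using Lemma \ref{lem: mode} and absorbed into $\FFF(s)$; the commutators with $I_{N\lambda}$ produce time-integrable errors that again fit inside $\FFF(s)$.

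The new ingredient is the treatment of $\int \Psi_A (\epsilon_1 G_2 - \epsilon_2 G_1)$, which expands to a cubic polynomial in $(\qbb, \epsilon, \tilde F)$ containing at least one factor of $\tilde F$. Since $\qbb$ is supported in $\{|y| \lesssim 1/b\}$ while $\Psi_A$ is supported in $\{|y| \geq A/2 \gg 1/b\}$, all contributions involving $\qbb$ are exponentially small and may be swallowed by $\Gamma_b^{1+Ca}$. The remaining terms of the form $\int \Psi_A \epsilon^2 \tilde F^2$ and $\int \Psi_A \epsilon \tilde F^3$ contain two or three factors of $\tilde F$ and can be absorbed into $\FFF(s)$ via the LWP-interval time-averaging argument already used in Lemma \ref{lem: mode}, combining \eqref{Lp}, \eqref{Lp2} with the bootstrap count \eqref{eq: balwpinteval}. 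The linear-in-$\tilde F$ term $\int \Psi_A \epsilon^3 \tilde F$ lacks sufficient pointwise smallness and must instead be written as $-\partial_s \HHH(s)$ by integration by parts in $s$, trading the $s$-derivative for a time primitive of $\tilde F$; using that $F$ solves a free Schr\"odinger equation and rescaling $\tilde F(s,x) = \lambda(t) F(t, \lambda(t) x + x(t)) e^{i\gamma(t)}$, one gains a positive power of $\lambda$ and obtains $|\HHH(s)| \lesssim \lambda^{\alpha_3}(s)$.

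The main obstacle is quantifying the $\lambda^{\alpha_3}$ gain in the time primitive of the linear-in-$\tilde F$ integral: this requires simultaneously tracking the rescaling factors appearing in $\tilde F$ and the probabilistic $L^p_t L^q_x$ bounds on $F$ from \eqref{Lp}--\eqref{Lp2}, and it reflects the heuristic that the spatial non-concentration of the randomized free evolution decouples $\tilde F$ from the $L^2$-bounded but otherwise unstructured tail $\epsilon$. Apart from this step, the rest of the estimate parallels \cite[Lemma 4.5]{colliander2009rough} essentially verbatim once the $G$-terms are disposed of.
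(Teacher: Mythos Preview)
Your overall strategy---compute $\partial_s\int\Psi_A|\epsilon|^2$ from \eqref{eq: e1}--\eqref{eq: e2} and absorb the $G$-contributions---is reasonable, but you have misidentified the role of $\HHH$ and, as a consequence, introduced an unnecessary and ill-defined step.

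In the paper, $\HHH$ has nothing to do with the random forcing. The authors instead pass to the original variables and work with $\Psi\bigl(\frac{x-x(t)}{A\lambda(t)}\bigr)|I_N a|^2$; the quantity $\HHH$ is precisely the discrepancy between $\int\Psi_A|\epsilon|^2$ and this rescaled expression, i.e.\ the $I_{N\lambda}$--commutator corrections
\[
\HHH=\int\Psi_A\bigl(|\epsilon|^2-|I_{N\lambda}\epsilon|^2\bigr)+\int\Psi_A\bigl(|I_{N\lambda}\epsilon+\qbb|^2-|I_{N\lambda}(\epsilon+\qbb)|^2\bigr),
\]
and the bound $|\HHH|\lesssim\lambda^{\alpha_3}$ follows immediately from $\epsilon\in H^s$ uniformly and the fact that $I_{N\lambda}-Id$ removes only frequencies $\gtrsim\lambda^{-\delta}$. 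You instead assigned these commutators to $\FFF$, which is not how the paper organizes the argument.

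More importantly, your claim that the linear-in-$\tilde F$ term $\int\Psi_A\epsilon^3\tilde F$ ``lacks sufficient pointwise smallness'' and must be handled by ``integration by parts in $s$'' is incorrect. After rescaling to the original variables (using $\qbb=0$ on $\mathrm{supp}\,\Psi_A$ so that $\epsilon$ becomes $\lambda a$ and $\tilde F$ becomes $\lambda F$), this term is controlled by
\[
\int_{t(s_1)}^{T_+}\int |a|^3|F|\,dx\,dt\lesssim\sum_{k}\sum_j\|a\|_{L^4_{t,x}[I_k^j]}^3\|F\|_{L^4_{t,x}[I_k^j]}\lesssim\sum_k k\,|I_k^j|^{1/4-}\lesssim\lambda(s_1)^{\alpha_2},
\]
using the deterministic local theory $\|a\|_{L^4_{t,x}[I_k^j]}\lesssim 1$ and the probabilistic bound $\|F\|_{L^4_{t,x}[I_k^j]}\lesssim|I_k^j|^{1/4-}$. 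So this term goes straight into $\FFF$, exactly like the quadratic- and cubic-in-$\tilde F$ terms; the paper treats all random forcing contributions uniformly in this way (see \eqref{eq: errorokagian}). Your proposed integration-by-parts-in-$s$ mechanism is neither needed nor clearly defined.
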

Lemma \ref{lem: L2disper} corresponds to Lemma 4.5 in \cite{colliander2009rough}, up to certain technical modifications. We quickly go over its proof, focusing only on what differs compared to the proof of \cite[Lemma 4.5]{colliander2009rough}. While one could actually absorb the $\FFF$ term into $\partial_{s}\HHH$, we choose to proceed in a manner that more closely follows the original presentation of \cite{colliander2009rough}.
\begin{rem}\label{rem: L2disper}
We recall the tail introduced in the previous section, $\zbb$ and set $\tilde{\epsilon}=\epsilon-\zbb$. Note that $\|\nabla \epsilon-\nabla \tilde{\epsilon}\|_{L_{x}^{2}}=\|\nabla \zbb\|_{L_{x}^{2}}\lesssim \Gamma_{b}^{1-C\eta}$, which implies, by choosing $a\gg C\eta$, that
\begin{equation}
\|\nabla I_{N\lambda}\epsilon-\nabla I_{N\lambda}\tilde{\epsilon}\|_{2}\lesssim \Gamma_{b}^{1+Ca},
\end{equation}
thus Lemma \ref{lem: L2disper} implies, in particular, that
\begin{equation}\label{eq: effectivel2}
\partial_{s}\int \Psi_{A}|\epsilon|^{2}\geq c_{3}b\int_{A\leq |x|\leq 2A}|\epsilon|^{2}-\Gamma_{b}^{1+Ca}-\Gamma_{b}^{a/2}\int |\nabla I_{N\lambda}\tilde{\epsilon}|^{2}-\FFF(s)-\partial_{s}\HHH(s)
\end{equation}
\end{rem}

\begin{proof}[Proof of Lemma \ref{lem: L2disper}]
Recall since $\qbb$ is supported in $|x|\lesssim \frac{1}{b}$, thus one has 
\begin{equation}\label{eq: sp}
\Psi_{A}|\qbb|^{2}\equiv 0.
\end{equation}
Thus
\begin{equation}
\Psi_{A}(|\inl \epsilon|^{2}-|\inl \epsilon+\qbb|^{2})=0.
\end{equation}
Now note
\begin{equation}\label{eq: tran}
\begin{aligned}
\Psi_{A}|\epsilon|^{2}
=&\Psi_{A}(|\epsilon|^{2}-|I_{N\lambda}\epsilon|^{2})\\
+&\Psi_{A}(|\inl \epsilon|^{2}-|\inl \epsilon+\qbb|^{2})\\
+&\Psi_{A}(-|\inl (\epsilon+\qbb)|^{2}+|\inl \epsilon+\qbb|^{2})\\
+&\Psi_{A}(|\inl \epsilon+\qbb|^{2}).
\end{aligned}
\end{equation}
Observe that the second line of \eqref{eq: tran} is $=0$ thanks to \eqref{eq: sp}. 
Let $\HHH$ be defined as 
\begin{equation}
\HHH=\Psi_{A}(|\epsilon|^{2}-|I_{N\lambda}\epsilon|^{2})+\Psi_{A}(-|\inl (\epsilon+\qbb)|^{2}+|\inl \epsilon+\qbb|^{2}).
\end{equation} 
$\HHH$ satisfies \eqref{eq: absorb3} since $\epsilon$ is bounded in $H^{s}$ due to the bootstrap assumption \eqref{eq: bah1control}, $\qbb$ is a nice function (uniformly in $b$), and $I_{N\lambda}-Id$ removes all frequencies above $N(t)\lambda(t)\sim \lambda(t)^{-\delta}$ for some $\delta>0$.

Moreover, we have
\begin{equation}
\frac{d}{ds}\int \Psi(\frac{x-x(t)}{A\lambda(t)})|\epsilon|^{2}=\frac{d}{ds}\frac{x-x(t)}{A(t)\lambda(t)}|I_{N\lambda}a|^{2}-\partial_{s}\HHH,
\end{equation} 
where we recall the ansatz for $a$ given by 
\[
a=\frac{1}{\lambda(t)}(\qbb+\epsilon)\left(\frac{x-x(t)}{\lambda(t)}\right)e^{i\gamma(t)}
\]
and that the scaling in $\lambda$ is $L_{x}^{2}$ invariant.

As mentioned above, the role $a$ plays for us is the same role played by $u$ in the proof of \cite[ Lemma 4.5]{colliander2009rough}. One may follow the computations leading to \cite[(4.27)]{colliander2009rough} and the formula above (4.27) in \cite{colliander2009rough} to derive\footnote{In the original \cite[(4.27)]{colliander2009rough}, there should be a $2\lambda^{2}$ before the $\Im(\cdots)$ term. Additionally, on the LHS of  \cite[(4.27)]{colliander2009rough}, it should read $\Psi_{A\lambda}(x-x(t))$ rather than $\Psi_{A}$.}
\begin{equation}\label{eq: eqok}
\begin{aligned}
\frac{d}{ds}\int \Psi(\frac{x-x(t)}{A\lambda(t)}) |I_{N \lambda} a|^2 \geq &c_{3}b\int_{A\leq |x|\leq 2A}|\epsilon^{2}|-\Gamma_{b}^{1+Ca}-\Gamma_{b}^{a/2}\int |\inl \nabla \epsilon|^{2}\\
&-\FFF_{1}(s)+2\lambda^{2}\Im\int \Psi(\frac{x-x(t)}{A\lambda(t)})\overline{I_{N}a}[I_{N}(a|a|^{2})-I_{N}a|I_{N}a|^{2}]\\
&-\lambda^{2}\bigl|\Psi(\frac{x-x(t)}{A\lambda(t)})\tilde{I}_{N}a\overline(I_{N})a\bigr|\\
&+2\lambda^{2}\Im \int  \Psi(\frac{x-x(t)}{A\lambda(t)})\overline(I_{N}a)\bigl[I_{N}\bigl(|a+F|^{2}(a+F) \bigr)-I_{N} \bigl(|a|^{2}a\bigr)\bigr]
\end{aligned}
\end{equation}
where $\FFF_{1}$ satisfies \eqref{eq: okerror} and consequently may be absorbed into $\FFF$. It has been explained in detail in \cite{colliander2009rough} why the second line and third line of \eqref{eq: eqok} can also be absorbed into $\FFF$. Note that in the final line of \eqref{eq: eqok}, we have used that $a$ satisfies a forced NLS \eqref{eq: eqfora}.

\medskip
We claim for all $s_{1}\in [s_{0}, s_{+}]$, one has 
\begin{equation}\label{eq: errorokagian}
\int_{s_{1}}^{s_{+}}\left|2\lambda^{2}\Im \int  \Psi(\frac{x-x(t)}{A\lambda(t)})\overline{(I_{N}a)}\biggl[I_{N}\bigl(|a+F|^{2}(a+F)\bigr) -I_{N}\bigl(|a|^{2}a\bigr)\biggr]\right |\lesssim \lambda(s_{1})^{\alpha_{2}}, 
\end{equation}
for some $\alpha_{2}>0$. This is again similar to \eqref{eq: absorb}, since if one lets $t(s_{1})\in [t_{k_{1}}, t_{k_{1}+1})$, and $T_{+}=t(s_{+})$, then using $\lambda^{2}ds=dt$, one can bound the LHS via 
\begin{equation}
\int_{t_{k_{1}}}^{T_{+}} (|a|_{L_{x}^{4}}^{3}+|F|_{L_{x}^{4}}^{3})\|F\|_{L_{x}^{4}},
\end{equation}
and proceed similarly as the proof of \eqref{eq: absorb}. We leave the details to interested readers.
\end{proof}

\subsection*{Step 5} In this step, we use the mass ``conservation'' law to combine Lemma~\ref{lem: movirial} and Lemma~\ref{lem: L2disper}, and derive Lyapunov type control.  For this part, we mostly directly referr to \cite{merle2006sharp} in \cite{colliander2009rough}. It should be noted, however, that unlike the $H^{1}$ case from  \cite{merle2006sharp}, or the $H^{s}, s>0$ case from \cite{colliander2009rough}, here we will need to handle an almost  conservation law rather than exact conservation law. Indeed, if $a$ solved the NLS, one would have
\[
\frac{d}{ds}\|\qbb+\epsilon\|_{2}^{2}=\frac{d}{ds}\|a\|_{L_{x}^{2}}\equiv 0.
\]
In our case, $a$ only solves a forced NLS \eqref{eq: eqfora} and thus does not enjoy precise mass conserv. We instead claim the following.
\begin{lem}\label{lem: slowmass}
$\forall s\in [s_{0}, s_{+})$, one has 
\begin{equation}\label{eq: slowmass}
\frac{d}{ds}\|\qbb+\epsilon\|_{L_{x}^{2}}^{2}=\partial_{s}\GGG,
\end{equation}
where $|\GGG(s)|\lesssim  \lambda^{\alpha_{2}}(s)$, for some $\alpha_{2}>0$.
\end{lem}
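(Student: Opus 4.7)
The plan is to read off the mass identity from the forced equation \eqref{eq: eqfora}, isolate its nonconservative part (all of whose terms carry at least one factor of $F$), and then bound that part by splitting into local wellposedness intervals exactly as in \eqref{eq: absorb2} and \eqref{eq: errorokagian}.

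By the $L^{2}_{x}$-invariance of the scaling in the ansatz \eqref{eq: ans} one has $\|\qbb+\epsilon\|_{L^{2}_{x}}^{2}=\|a\|_{L^{2}_{x}}^{2}$, and using $ds=\lambda^{-2}dt$ together with \eqref{eq: eqfora} I would compute
\[
\tfrac{d}{ds}\|\qbb+\epsilon\|_{L^{2}_{x}}^{2}=\lambda^{2}\tfrac{d}{dt}\|a\|_{L^{2}_{x}}^{2}=-2\lambda^{2}\,\Im\int \bar a\,\bigl[|a+F|^{2}(a+F)-|a|^{2}a\bigr]\,dx,
\]
since the $\Delta a$ term vanishes after integration by parts and the pure $|a|^{2}a$ piece gives $|a|^{4}\in\mathbb{R}$ when paired with $\bar a$. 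Expanding the bracket produces the terms $|a|^{2}F$, $a^{2}\bar F$, $a|F|^{2}$, $\bar a F^{2}$, $|F|^{2}F$, each containing at least one $F$. I would then set
\[
\GGG(s):=-\int_{s}^{s_{+}}2\lambda^{2}(s')\,\Im\int \bar a\,\bigl[|a+F|^{2}(a+F)-|a|^{2}a\bigr]\,dx\,ds',
\]
so that $\partial_{s}\GGG=\tfrac{d}{ds}\|\qbb+\epsilon\|_{L^{2}_{x}}^{2}$ and $\GGG(s_{+})=0$.

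To bound $|\GGG(s)|$, I would pass back to the $t$-variable and apply H\"older in $x$ to get
\[
|\GGG(s)|\lesssim \int_{t(s)}^{T_{+}}\bigl(\|a\|_{L^{4}_{x}}^{3}\|F\|_{L^{4}_{x}}+\|a\|_{L^{4}_{x}}^{2}\|F\|_{L^{4}_{x}}^{2}+\|a\|_{L^{4}_{x}}\|F\|_{L^{4}_{x}}^{3}\bigr)\,dt.
\]
As in the proof leading to \eqref{eq: absorb2}, split $[t(s),T_{+}]$ into the log-log levels $[t_{k},t_{k+1}]$ and each level further into LWP subintervals $I_{k}^{j}$. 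On each $I_{k}^{j}$, the deterministic $L^{2}_{x}$ local theory for the mass-critical NLS (applied to $u=a+F$) yields $\|a\|_{L^{4}_{t,x}(I_{k}^{j})}\lesssim 1$, while restricting to the probabilistic subset of Section \ref{sec:pi} and using H\"older in time gives $\|F\|_{L^{4}_{t,x}(I_{k}^{j})}\lesssim |I_{k}^{j}|^{\alpha_{p}}$ with $\alpha_{p}\to 1/4$ as $p\to\infty$. Since each term carries at least one factor of $F$, each subinterval contributes $\lesssim |I_{k}^{j}|^{\alpha_{p}}\sim \lambda(t_{k})^{(2-\eet)\alpha_{p}}$, and the bootstrap counting \eqref{eq: balwpinteval}, \eqref{eq: sizeoflwp} (at most $k\lambda(t_{k})^{-\eet}$ subintervals at level $k$) combined with $\lambda(t_{k})\sim 2^{-k}$ gives, upon geometric summation in $k\geq k(s)$,
\[
|\GGG(s)|\lesssim \sum_{k\geq k(s)} k\,\lambda(t_{k})^{(2-\eet)\alpha_{p}-\eet}\lesssim \lambda(s)^{\alpha_{2}}
\]
for some $\alpha_{2}>0$, provided $p$ is taken sufficiently large.

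The only delicate point will be this last summation: one must choose $p$ large enough that the probabilistic gain $(2-\eet)\alpha_{p}$ dominates both the $\eet$ loss from counting the LWP subintervals per level and the linear-in-$k$ loss from summing over log-log levels. This is exactly the bookkeeping already carried out in \eqref{eq: absorb2} and \eqref{eq: errorokagian}, so I expect no new difficulty.
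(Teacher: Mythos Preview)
Your proposal is correct and matches the paper's own argument essentially line by line: compute $\tfrac{d}{ds}\|a\|_{L^{2}_{x}}^{2}$ from the forced equation \eqref{eq: eqfora}, define $\GGG$ as the tail integral of the forcing contribution, and bound it by H\"older in $x$, passing to the $t$-variable and summing over LWP intervals exactly as in \eqref{eq: absorb2}/\eqref{eq: errorokagian}. The paper in fact omits the details you have written out, so your version is more explicit but not different in substance.
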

\begin{proof}
We compute
\begin{equation}
\frac{d}{ds}\|\qbb+\epsilon\|_{L_{x}^{2}}^{2}=\frac{d}{ds}\|a\|_{L_{x}^{2}}^{2}=\lambda^{2}\frac{d}{dt}\|a\|_{L_{x}^{2}}^{2},
\end{equation}
which is bounded by 
\begin{equation}
\lambda^{2}\left|\int a (|a+F|^{2}(a+F)-|a|^{2}a)\right|.
\end{equation}
We need only to verify that
\begin{align}
&\int_{s}^{s_{+}} \lambda^{2}\left|\int a (|a+F|^{2}(a+F)-|a|^{2}a)ds\right| \nonumber\\
&=\int_{t(s)}^{T_{+}}\left|\int a (|a+F|^{2}(a+F)-|a|^{2}a)dt \right| \nonumber\\
& \lesssim \lambda(s)^{\alpha_{2}}, 
\end{align}
for some $\alpha_{2}>0$. This is again similar to \eqref{eq: absorb} and we omit further details.
\end{proof}

Thus, one has by expanding \eqref{eq: slowmass} and observing that $Q$ is not dependent on $s$, that
\begin{equation}\label{eq: moremasscon}
\frac{d}{ds}\left(\|\qbb\|_{L_{x}^{2}}^{2}-\|Q\|_{L_{x}^{2}}^{2}+\|\epsilon\|_{2}^{2}+2(\epsilon_{1}, \Sigma_{b})+2(\epsilon_{2}, \Theta_{2})\right)=\partial_{s}\GGG.
\end{equation}
Now, we are ready to follow the computation in the proof of \cite[Proposition 4]{merle2006sharp}. We combine \eqref{eq: l2dis} and \eqref{eq: movirial} with the help of \eqref{eq: slowmass}, and, as in \cite{merle2006sharp}, we obtain
\begin{align*}
&\frac{1}{100}c_{3}\delta b f_{1}(s)+\frac{d}{ds}\int \Psi_{A}|\epsilon|^{2}\\
&\geq  \frac{1}{200}c_{3}\delta \Xi(s)+\frac{1}{200}c_{3}\delta b|\int I_{N\lambda}\nabla \epsilon|^{2}\\
& \hspace{14mm}+\frac{1}{200}c_{3}\delta b\int |\epsilon|^{2}e^{-|y|}+\frac{1}{200}c_{3}\delta b\Gamma_{b}-\FFF-\partial_{s}\HHH.
\end{align*}
We substitute
\begin{equation}
\frac{d}{ds}\int \Psi_{A}|\epsilon|^{2}=-\left(\|\qbb\|_{L_{x}^{2}}^{2}-\|Q\|_{L_{x}^{2}}^{2})+\|\epsilon\|_{2}^{2}+2(\epsilon_{1}, \Sigma_{b})+2(\epsilon_{2}, \Theta_{2})\right)-\partial_{s}\GGG,
\end{equation}
and let $\JJJ$ be defined as 
\begin{align}
\JJJ(s)&=\int |\qbb|^{2}-\int |Q|^{2}+2(\epsilon_{1}, \Sigma)+2(\epsilon_{2},\Theta)+\int (1-\Psi_{A}\epsilon^{2})\\
&\hspace{8mm}-\frac{1}{100}c_{3}\delta(\tilde{f}_{1}(b)-\int_{0}^{b}\tilde{f}_{1}(v)dv+b\{(\epsilon_{2}, \Lambda \Re\zbb)-(\epsilon_{1}, \Lambda \Im \zbb)\}-\HHH-\GGG,
\end{align}
where 
\[
\tilde{f}_{1}(b)=\frac{b}{4}\|y\qbb\|_{2}^{2}+\frac{1}{2}\Im \int y\nabla \zbb \overline{\zbb}.
\]
We then obtain
\begin{equation}\label{eq: ly}
\partial_{s}\JJJ\leq -Cb\left[\Gamma_{b}+\Xi+\int |\nabla \inl \tilde{\epsilon}|^{2}+\int |\epsilon(s)|^{2}e^{-|y|}+\int_{A\leq |x|\leq 2A}|\epsilon|^{2}\right]+\FFF,
\end{equation}
which corresponds to \cite[(4.28)]{colliander2009rough}, though the definition of $\JJJ$ now involves the correction $\HHH$ and $\GGG$.

\medskip

The main observation is the simple fact that $\JJJ$ is of size $b^{2}$, and the two extra correction terms are of size $\ll \Gamma_{b}^{100}$ and can be neglected. Hence, $\JJJ$ with the extra corrections $\HHH$ and $\GGG$ can still serve as Lyapunov functions as in \cite{colliander2009rough}.

Estimates  \eqref{eq: movirial} and \eqref{eq: ly} ensure the dynamics remain in the log-log blowup regime, and are enough to close the bootstrap lemma \ref{lem: boot}. Indeed, the rest of the proof of the bootstrap lemma follows almost line by line as in \cite{colliander2009rough}, as well as following from the original scheme in \cite{planchon2007existence}. We go over its proof quickly:

\begin{itemize}
\item One applies mass (almost) conservation law, Lemma \ref{lem: slowmass} to upgrade \eqref{eq: basmall} into \eqref{eq: besmall}. (In \cite{colliander2009rough}, one can just apply the exact mass conservation law).
\item One use the monotonicity of $\JJJ$ to upgrade \eqref{eq: bah1control} into \eqref{eq: beh1control}.
\item Estimate \eqref{eq: mode1} implies in average sense $\lambda_{s}/\lambda\sim -b$, this is already enough to upgrade \eqref{eq: bamono} into \eqref{eq: bamono}
\item Now the dynamics of $\lambda $ are dictated by the dynamics of $b$, and $b_{s}$ is governed by \eqref{eq: lv} and \eqref{eq: ly}. This will allows one to upgrade \eqref{eq: balambdasmall}, \eqref{eq: belambdasmall}, and to upgrade \eqref{eq: balwpinteval} into \eqref{eq: belwpinteval} .
\end{itemize}
This concludes the proof of bootstrap lemma \ref{lem: boot}. 

Finally, we may now prove the main theorem:

\begin{proof}[Proof of Theorem \ref{thm: mainrigor}]
Following from the probabilistic local wellposedness of Lemma \ref{lem: lwplocalpro} and the energy estimates of Proposition \ref{pro: almost}, as we have detailed above, one obtains an exception set of small probability so that the bootstrap lemma \ref{lem: boot} holds. As mentioned in the last two steps above, the dynamics of $\lambda$ are dictated by those of $b$, whose dynamics are controlled by \eqref{eq: ly} and \eqref{eq: lv}. This is sufficient to prove $\lambda(t)$ goes to zero as the desired rate, see \cite{merle2006sharp, colliander2009rough} for more details. This concludes the proof of Theorem \ref{thm: mainrigor}.
\end{proof}
\bibliographystyle{abbrv}
\bibliography{BG_2}
\end{document}